\documentclass[11pt]{amsart}
\textwidth15.1cm
\textheight21cm
\headheight12pt
\oddsidemargin.4cm
\evensidemargin.4cm
\topmargin0.5cm

\usepackage{amsmath, amsfonts, amssymb, amsthm, amscd}

\usepackage{color}
\usepackage{hyperref}
\usepackage{mathtools}
\usepackage[capitalize,nameinlink,noabbrev,nosort]{cleveref}
\usepackage{graphicx}
\usepackage{tikz}

\usepackage[leqno]{amsmath}
\makeatletter
\newcommand{\leqnomode}{\tagsleft@true}
\newcommand{\reqnomode}{\tagsleft@false}
\makeatother

\usepackage{cases}
\usepackage{subfigure}
\usepackage{color}
\usepackage{latexsym, longtable}
\usepackage{enumerate}
\usepackage{comment}

\usepackage[T1]{fontenc}
\usepackage{tikz-cd}

\newtheorem{theorem}{Theorem}[section]
\newtheorem{proposition}[theorem]{Proposition}

\newtheorem{lemma}[theorem]{Lemma}

\newtheorem{corollary}[theorem]{Corollary}

\newtheorem{definition}[theorem]{Definition}
\newtheorem{defn}[theorem]{Definition}

\newtheorem{example}[theorem]{Example}
\newtheorem{algorithm}[theorem]{Algorithm}

\newtheorem{remark}[theorem]{Remark}
\numberwithin{equation}{section}

\DeclareMathOperator{\PDS}{PDS}
\DeclareMathOperator{\Sym}{Sym}

\DeclareMathOperator{\perm}{perm}

\DeclareMathOperator{\ST}{ST}

\newcommand{\T}{\mathcal{T}}
\newcommand{\Z}{\mathbb{Z}}

\DeclareMathOperator{\arm}{arm}
\DeclareMathOperator{\leg}{leg}
\DeclareMathOperator{\Des}{Des}

\DeclareMathOperator{\maj}{maj}

\DeclareMathOperator{\inv}{inv}
\DeclareMathOperator{\inc}{inc}
\DeclareMathOperator{\coinv}{coinv}

\DeclareMathOperator{\wt}{wt}
\DeclareMathOperator{\dec}{dec}
\DeclareMathOperator{\dg}{dg}

\DeclareMathOperator{\South}{South}

\newlength\cellsize \setlength\cellsize{12\unitlength}
\savebox2{%
\begin{picture}(12,12)
\put(0,0){\line(1,0){12}}
\put(0,0){\line(0,1){12}}
\put(12,0){\line(0,1){12}}
\put(0,12){\line(1,0){12}}
\end{picture}}
\newcommand\cellify[1]{\def\thearg{#1}\def\nothing{}%
\ifx\thearg\nothing
\vrule width0pt height\cellsize depth0pt\else
\hbox to 0pt{\usebox2\hss}\fi%
\vbox to 12\unitlength{
\vss
\hbox to 12\unitlength{\hss$#1$\hss}
\vss}}
\newcommand\tableau[1]{\vtop{\let\\=\cr
\setlength\baselineskip{-16000pt}
\setlength\lineskiplimit{16000pt}
\setlength\lineskip{0pt}
\halign{&\cellify{##}\cr#1\crcr}}}
\savebox3{%
\begin{picture}(12,12)
\put(0,0){\line(1,0){12}}
\put(0,0){\line(0,1){12}}
\put(12,0){\line(0,1){12}}
\put(0,12){\line(1,0){12}}
\end{picture}}
\newcommand\expath[1]{%
\hbox to 0pt{\usebox3\hss}%
\vbox to 12\unitlength{
\vss
\hbox to 12\unitlength{\hss$#1$\hss}
\vss}}

\newcommand\cell[3]{
\def\i{#1} \def\j{#2} \def\entry{#3}
\draw (\j-1,-\i)--(\j,-\i)--(\j,-\i+1)--(\j-1,-\i+1)--(\j-1,-\i);
\node at (\j-.5,-\i+.5) {\entry};
}

\newcommand\graycell[3]{
\def\i{#1} \def\j{#2} \def\entry{#3}
\draw[fill=gray!50](\j-1,-\i)--(\j,-\i)--(\j,-\i+1)--(\j-1,-\i+1)--(\j-1,-\i);
\node at (\j-.5,-\i+.5) {\entry};
}

\newcommand\circleT[5]{
\def \n {5}
\def \radius {.5cm}
\def \margin {20}
\edef\s{0}
 \pgfmathparse{\s+1};
\foreach \en in {#1,#2,#3,#4,#5} {%
  \node at ({360/\n * (\s - 1)}:\radius) {$\en$};
   \draw[>=latex] ({360/\n * (\s - 1)+\margin}:\radius) 
     arc ({360/\n * (\s - 1)+\margin}:{360/\n * (\s)-\margin}:\radius);
  \pgfmathparse{\s+1}
      \xdef\s{\pgfmathresult}
}
}

\newcommand\circleS[4]{
\def \n {4}
\def \radius {.5cm}
\def \margin {20}
\edef\s{0}
 \pgfmathparse{\s+1};
\foreach \en in {#1,#2,#3,#4} {%
  \node at ({360/\n * (\s - 1)}:\radius) {$\en$};
   \draw[>=latex] ({360/\n * (\s - 1)+\margin}:\radius) 
     arc ({360/\n * (\s - 1)+\margin}:{360/\n * (\s)-\margin}:\radius);
  \pgfmathparse{\s+1}
      \xdef\s{\pgfmathresult}
}
}

\renewcommand{\arraystretch}{2}

\usepackage[backend=bibtex, giveninits=true, maxcitenames=5]{biblatex}
\addbibresource{Macbib.bib}

\title[Compact formulas for  Macdonald polynomials]{Compact formulas for Macdonald polynomials \\ and quasisymmetric Macdonald polynomials}

\date{\today}
\author[Corteel]{Sylvie Corteel}
\address{Department of Mathematics, UC Berkeley, USA}
\email{corteel@berkeley.edu}
\author[Haglund]{Jim Haglund}
\address{Department of Mathematics, University of Pennsylvania, USA}
\email{haglund@math.upenn.edu}
\thanks{Jim Haglund was partially supported by NSF grant DMS-1600670.}
\author[Mandelshtam]{Olya Mandelshtam}
\address{Department of Mathematics, Brown University USA}
\email{olya@math.brown.edu}
\thanks{Olya Mandelshtam was partially supported by NSF grant DMS-1704874.}
\author[Mason]{Sarah Mason}
\address{Department of Mathematics, Wake Forest University, USA}
\email{masonsk@wfu.edu}
\author[Williams]{Lauren Williams}
\address{Department of Mathematics, Harvard University, USA}
\email{williams@math.harvard.edu}
\thanks{Lauren Williams was partially supported by NSF grant DMS-1854512.}

\begin{document}
\begin{abstract}
We present several new and compact formulas for the modified and integral form of the Macdonald polynomials, building on the compact ``multiline queue'' formula for Macdonald polynomials due to Corteel, Mandelshtam and Williams. We also introduce a new quasisymmetric analogue of Macdonald polynomials.  These ``quasisymmetric Macdonald polynomials" refine the (symmetric) Macdonald polynomials and specialize to the quasisymmetric Schur polynomials defined by Haglund, Luoto, Mason, and van Willigenburg. 
\end{abstract}

\maketitle

\section{Introduction}

The symmetric 
\emph{Macdonald polynomials} $P_{\lambda}(X; q, t)$ \cite{Macdonald}
are a family of polynomials in $X = \{x_1, x_2,\dots \}$ indexed by partitions, whose coefficients depend on two parameters $q$ and $t$.  Macdonald polynomials generalize 
multiple important families of polynomials, including Schur polynomials and Hall-Littlewood polynomials. They can be defined as the unique monic basis for the ring of symmetric functions that satisfies certain triangularity and orthogonality conditions. The related \emph{nonsymmetric Macdonald polynomials} $E_{\mu}(X;q,t)$ \cite{Mac88, Macdonald, Cher1} were developed shortly after the introduction of Macdonald polynomials as a tool to study Macdonald polynomials. The $E_{\mu}(X;q, t)$ are indexed by weak compositions and form a basis for the full polynomial ring $\mathbb{Q}[X](q,t)$. 

There has been a great deal of work devoted to understanding Macdonald polynomials from a combinatorial point of view. Haglund-Haiman-Loehr \cite{HHL04} gave a combinatorial formula for the \emph{integral forms} $J_{\lambda}(X; q,t)$, which are scalar multiples of the classical monic forms $P_{\lambda}(X;q, t)$.  They also gave a formula for the nonsymmetric Macdonald polynomials $E_{\mu}(X;q,t)$ \cite{HHL08}, and for the \emph{transformed} or \emph{modified} Macdonald polynomials $\widetilde{H}_{\lambda}(X;q,t)$, which are obtained from $J_{\lambda}(X;q, t)$ via \emph{plethysm}. Macdonald conjectured and Haiman proved \cite{Hai99}, using the geometry of the Hilbert scheme, that the modified Macdonald polynomials $\widetilde{H}_{\lambda}(X;q,t)$ have a positive Schur expansion whose coefficients are $qt$-Kostka polynomials.  However, it is still an open problem to give a combinatorial proof of the Schur positivity or a manifestly positive formula for the $qt$-Kostka polynomials.

Recently, a beautiful connection has been found between Macdonald polynomials and a statistical mechanics model called the multispecies \emph{asymmetric simple exclusion process} (ASEP) on a circle. The ASEP is a one-dimensional exactly solvable particle model; Cantini-deGier-Wheeler \cite{CGW} showed that the partition function of the multispecies ASEP on a circle
is equal to a Macdonald polynomial $P_{\lambda}(x_1,\dots,x_n; q, t)$ evaluated at $q=1$ and $x_i=1$ for all $i$. Building on this result as well as work of Martin \cite{Martin}, the first, third, and fifth authors recently used \emph{multiline queues} to simultaneously compute the stationary probabilities of the multispecies exclusion process, and give compact formulas for the symmetric Macdonald polynomials $P_{\lambda}$ and the nonsymmetric Macdonald polynomials  $E_{\lambda}$ \cite{CMW18}, for any partition $\lambda$. These formulas are ``compact'' in that they have fewer terms than the formulas of Haglund-Haiman-Loehr.

In this paper, we use the above ideas to continue the search for compact formulas for Macdonald polynomials. Our first two main results are compact formulas for the modified Macdonald polynomials $\widetilde{H}_{\lambda}(X;q,t)$ and the integral forms $J_{\lambda}(X;q,t)$; these new formulas have far fewer terms than other known combinatorial formulas. Our third main result uses the connection with the ASEP on a ring towards a different application: the introduction of a new family of quasisymmetric functions we call \emph{quasisymmetric Macdonald polynomials}  $G_{\gamma}(X; q, t)$. We show that $G_{\gamma}(X; q, t)$ is indeed a quasisymmetric function, and give a combinatorial formula for the $G_{\gamma}(X; q, t)$ corresponding to ``pieces'' of the compact formula for the $P_{\lambda}(X; q, t)$ from \cite{CMW18}. Moreover, the quasisymmetric functions $G_{\gamma}(X; q, t)$ at $q=t=0$ specializes to the \emph{quasisymmetric Schur functions} $\text{QS}_{\gamma}(X)$ introduced by the second and fourth authors, together with Luoto and van Willigenburg \cite{HLMV09}.  The quasisymmetric Schur functions form a basis for the ring of quasisymmetric functions, and until now it has been an open question to find a refinement of the Macdonald polynomials $P_{\lambda}$ into quasisymmetric pieces which generalize the quasisymmetric Schur functions.

We note that Garbali and Wheeler have recently used
integrable lattice models to give a new formula for the modified
Macdonald polynomials \cite{GarbaliWheeler}.   Their formula can be viewed as an extension of the ``fermionic formula" of Kerov, Kirillov and Reshetikhin \cite{KKR86}, \cite{KR88} for modified Hall-Littlewood polynomials, and the combinatorics involved is quite a bit different than that of the HHL formula or of our compact version.

This paper is organized as follows. In \cref{sec:def}, we provide the relevant background. \cref{sec:Hcompact} and \cref{sec:Jcompact} describe our two compact formulas.  \cref{sec:qsym} defines our new quasisymmetric Macdonald polynomials as well as several open problems which naturally arise from this work.

\textsc{Acknowledgements:}
This material is based upon work supported by the National Science Foundation
under agreement No.\ DMS-1600670, No.\ DMS-1704874,  and No.\ DMS-1854512.  Any opinions,
findings and conclusions or recommendations expressed in this material
are those of the authors and do not necessarily reflect  the
views of the National Science Foundation. This material is based upon work supported by the Swedish Research Council under grant no.
2016-06596 while the first and third authors were in residence at Institut Mittag-Leffler in Djursholm, Sweden
during Spring 2020.

\section{Definitions}\label{sec:def}

We begin by introducing the 
notation and definitions we will need in Sections \ref{sec:Hcompact} and \ref{sec:Jcompact}.  In our partition and composition diagrams, the columns are labeled from left to right, and the rows are labeled from bottom to top, so that the notation $(i,r)$ refers to the box (or \emph{cell}) in the $i^{th}$ column from the left and the $r^{th}$ row from the bottom. 

Given a partition or composition $\alpha=(\alpha_1,\dots, \alpha_n)$, 
its \emph{diagram} $\dg(\alpha)$ is a sequence of columns (bottom justified), where the $i^{th}$ column has $\alpha_i$ cells.  See 
\cref{fig:leg} for an example.
  Note that the Ferrers graph of a partition 
  $\lambda = (\lambda_1,\dots,\lambda_n)$ 
  is typically defined to contain $\lambda _i$ squares in the $i$th row (from the bottom).  This means that when $\alpha$ is a partition, the diagram $\dg(\alpha)$ is the Ferrers graph of the conjugate partition $\alpha^{\prime}$.

For a cell $s\in \dg(\lambda)$, let $\sigma(s)$ denote the integer assigned to $s$, i.e., the integer occupying cell $s$.  The numbers appearing in such a filling are called the \emph{entries}. 

The \emph{leg} of a cell $(i,r)$ 
is the set of cells in column $i$ above the cell $(i,r)$. 
The \emph{arm} of a cell $(i,r)$ 
is the set of cells in row $r$ to the right of the cell $(i,r)$ contained in a column whose height doesn't exceed $\alpha_i$, together with the cells in row $r-1$ to the left of the cell $(i,r-1)$, contained in a column whose height is smaller than $\alpha_i$. 
We let $\leg((i,r))$
and $\arm((i,r))$ denote the number of cells in the leg and arm of the cell $(i,r)$,
respectively.  
See \cref{fig:leg}.

\begin{figure}
  \centering
\begin{tikzpicture}[scale=.5]
\cell31{$$} \cell32{} \cell33{$$} \cell34{$u$}  \cell37{$a$}\cell39{$a$}
\cell41{} \cell42{} \cell43{$a$}\cell44{}  \cell46{$$} \cell47{} \cell49{$$} 
\cell12{} 
\cell21{} \cell22{} \cell24{$\ell$} \cell27{}
\cell51{$$} \cell52{$$}  \cell53{$$} \cell54{$$}  \cell55{$$}  \cell56{$$}  \cell57{$$}  \cell58{$$}  \cell59{$$} 
\end{tikzpicture} 
\caption{The diagram of the composition $(4,5,3,4,1,2,4,1,3)$  and the cells in the leg and the arm of the cell $u=(4,3)$. Here $\leg(u)=1$ and $\arm(u)=3$.}
\label{fig:leg}
\end{figure}
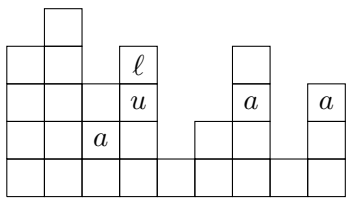

Given a partition $\lambda$, a \emph{filling} $\sigma:\dg(\lambda) \to \Z^+$ is an assignment of positive integers to the cells of the diagram of $\lambda$ and is denoted by $\sigma$. The \emph{$x$-weight} of a filling $\sigma$ of $\dg(\lambda)$
is defined in a similar fashion to the $x$-weight of a semistandard Young tableau, namely \[x^{\sigma} = \prod_{s\in \dg(\lambda)} x_{\sigma(s)}.\]

We recall several definitions from~\cite{HHL04}. We augment the diagram of a 
composition or partition $\alpha$ by adding a \emph{basement}, i.e., a zero'th row of size equal to the number of parts of $\alpha$.
In this section we will fill all cells of the basement
 with the entry $\infty$; later in 
\cref{sec:Jcompact}
  we will use a more 
 general definition of basement.

\begin{definition}
	Given a  partition $\lambda$ and a filling $\sigma$ of $\dg(\lambda)$, a \emph{triple} consists of three cells $(v,r)$, $(u,r)$, and $(u,r-1)$ in the diagram, where $u,v$, and $r$ are positive integers with $u<v$.  Let $a = \sigma((v,r))$, $b = \sigma((u,r))$, and $c=\sigma((u,r-1))$.  These entries are therefore arranged in the configuration $\tableau{b\\  c}\quad \tableau{a}$\,. We say that the triple  is a \emph{counterclockwise triple}, or an \emph{inversion triple}, if the entries are increasing in counterclockwise order.  If two entries are equal, we consider the one appearing first when the entries are read top-to-bottom, left-to-right to be the smaller entry. In other words, it is an inversion triple if any of the following holds:
\begin{align*}
a < b \leq c\ \ {\rm or}\ \ 
c < a < b \ \ {\rm or}\ \ 
b \leq c < a.
\end{align*}
In all other cases we say the triple is a \emph{clockwise triple}, or a \emph{coinversion triple}. 
\end{definition}

For example, in \cref{fig:perm}, the cells $(3,3)$, $(1,3)$ and $(1,2)$  form a (counterclockwise) inversion triple.

Note that since $\sigma((j,0))=\infty$, for $u<v$, we have that $(v,1)$ and  $(u,1)$ form a (counterclockwise) inversion triple if and only if $\sigma((u,1))>\sigma((v,1))$. In this case we say that the triple is a {\em degenerate inversion triple}. For example, in \cref{fig:perm} the cells $(3,1)$ and $(6,1)$  form a degenerate inversion triple. Given a filling $\sigma$ of  $\dg(\lambda)$, let $\inv(\sigma)$ be the total number of  (counterclockwise) inversion triples, including degenerate inversion triples.

Define the \emph{set of descents} of a filling $\sigma$ of $\dg(\lambda)$ to be 
\[
	\Des(\sigma) = \{(u,r)\in \dg(\lambda)\ :\ \sigma((u,r))>\sigma((u,r-1))\}.
\]   
The \emph{major index} $\maj(\sigma)$ is then defined to be the sum of the legs of the descents of 
	$\sigma$ plus the number of descents:
\[
\maj(\sigma) = \sum_{s \in \Des(\sigma)} (\leg(s)+1).
\]
The filling $\sigma$ of $\dg(\lambda)$ in \cref{fig:perm} has $\inv(\sigma)=22$ and $\maj(\sigma)=5$. 

\begin{definition}\label{def:weak}
Given a \emph{weak composition}, i.e.\ a vector $\alpha = (\alpha _1, \alpha _2, \ldots , \alpha _n)$ of nonnegative integers, we let $\inc(\alpha)$ and $\dec(\alpha)$ be the vectors obtained from $\alpha$ by sorting the parts into weakly increasing order, and weakly decreasing order, respectively. Let $\beta (\alpha)$ be the permutation in $S_n$ of maximal length with the property that $\beta$ applied to the entries of the vector $\alpha$ yields $\inc(\alpha)$.  Let $\alpha ^{+}$ be the \emph{strong composition} obtained from $\alpha$ by removing  the zeros and let $\ell (\alpha)$ be the number of parts of $\alpha ^{+}$.  For example, if $\alpha = (0,2,0,2,1, 3)$ then  $\inc(\alpha) = (0,0,1,2,2,3)$, $\dec(\alpha) = (3,2,2,1,0,0)$, $\beta (\alpha) = (3,1,5,4,2,6)$, $\alpha ^{+} = (2,2,1,3)$, and $\ell (\alpha) = 4$.     
\end{definition}

\section{Compact formula for modified Macdonald polynomials}\label{sec:Hcompact}

Our first main result is a ``compact'' formula for the modified Macdonald
polynomials  $\widetilde{H}_{\lambda}(X; q, t)$, given in 
\cref{thm:MacPerm2}. 
  Before explaining
our result, we first
recall the combinatorial formula 
 of Haiman, Haglund and Loehr \cite{HHL04}.
\begin{theorem}[{\cite[Theorem 2.2]{HHL04}}] \label{thm:HHL} 
Let $\lambda$ be a partition.
	The modified Macdonald polynomial $\widetilde{H}_{\lambda}(X;q,t)$ is given by
	\begin{equation}\label{eq:HHL}
        \widetilde{H}_{\lambda}(X;q,t) = \sum_{\sigma: \text{dg}(\lambda) \to \Z^{+}} x^\sigma q^{\maj(\sigma)}t^{\inv(\sigma)},
	\end{equation}
where the sum is over all fillings of $\dg(\lambda)$.
\end{theorem}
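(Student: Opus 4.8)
Write $\widetilde{C}_\lambda(X;q,t)$ for the right-hand side of \eqref{eq:HHL}. The plan is to prove $\widetilde{C}_\lambda=\widetilde{H}_\lambda$ by verifying the standard triangularity and normalization conditions that characterize $\widetilde{H}_\lambda$ among symmetric functions (see \cite{HHL04} and the references there): that $\widetilde{C}_\lambda$ is symmetric, that $\widetilde{C}_\lambda[X(1-q);q,t]$ lies in the linear span of the Schur functions $s_\nu$ for $\nu$ dominating $\lambda$ while $\widetilde{C}_\lambda[X(1-t);q,t]$ lies in the span of those for $\nu$ dominating $\lambda'$, and that the coefficient of $s_{(n)}$ is $1$, where $n=|\lambda|$. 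One has to carry the transpose convention of \cref{sec:def} consistently through all of this, which I suppress below.

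\emph{Symmetry.} Permuting the entries of a filling changes both $\maj$ and $\inv$, so symmetry is not visible from the definition. I would recover it in the classical way, by writing $\widetilde{C}_\lambda$ as a nonnegative combination (with coefficients monomials in $q$ recording $\maj$) of LLT polynomials of tuples of skew shapes read off from $\dg(\lambda)$, arranged so that the number of inversion triples of a filling corresponds to the LLT inversion statistic on the tuple. Since LLT polynomials are symmetric (Lascoux--Leclerc--Thibon), so is $\widetilde{C}_\lambda$.

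\emph{Normalization.} For a homogeneous symmetric function of degree $n$, the coefficient of $s_{(n)}$ equals the coefficient of the monomial $x_1^{\,n}$, since $(n)$ is the largest partition of $n$ in dominance order. The unique filling of $\dg(\lambda)$ of $x$-weight $x_1^{\,n}$ is the all-ones filling; it has no descents, so $\maj=0$, and (checking the tie-breaking rule in the definition of an inversion triple) no inversion triples, so $\inv=0$. Hence this coefficient is $1$.

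\emph{Triangularity.} This is the technical heart. I would first establish the $X(1-q)$ statement. Expanding the plethysm combinatorially yields a signed sum over fillings whose entries lie in a super-alphabet (barred letters contributing a factor $-q$), and the claim reduces to the vanishing of the coefficient of $s_\nu$ whenever $\nu$ does not dominate $\lambda$. I would prove this by constructing a sign-reversing, weight-preserving involution on the superized fillings whose surviving fixed points are supported on content vectors dominated by $\lambda$; making the involution simultaneously well-defined, sign-reversing, and compatible with the triple statistics is the main obstacle, and is precisely the crux of the Haglund--Haiman--Loehr argument. I would then deduce the $X(1-t)$ statement from the $X(1-q)$ statement together with the conjugate symmetry $\widetilde{C}_\lambda(X;q,t)=\widetilde{C}_{\lambda'}(X;t,q)$ (applying the former to $\lambda'$ with $q$ and $t$ interchanged); the conjugate symmetry itself is obtained from a weight-preserving bijection between fillings of $\dg(\lambda)$ and fillings of $\dg(\lambda')$ that swaps $\maj$ and $\inv$. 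With symmetry, normalization, and both triangularity conditions in hand, the uniqueness in the characterization of $\widetilde{H}_\lambda$ forces $\widetilde{C}_\lambda=\widetilde{H}_\lambda$.
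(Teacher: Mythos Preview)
The paper does not give its own proof of \cref{thm:HHL}: it is stated as a citation of \cite[Theorem~2.2]{HHL04} and used as input to the rest of Section~\ref{sec:Hcompact}. So there is no in-paper argument to compare against; your sketch is essentially a summary of the original Haglund--Haiman--Loehr proof strategy (symmetry via LLT polynomials, normalization from the all-ones filling, triangularity via superization and a sign-reversing involution).

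One step in your sketch is shakier than the others. You propose to deduce the second triangularity from the first together with the conjugate symmetry $\widetilde{C}_\lambda(X;q,t)=\widetilde{C}_{\lambda'}(X;t,q)$, and then assert that this conjugate symmetry ``is obtained from a weight-preserving bijection between fillings of $\dg(\lambda)$ and fillings of $\dg(\lambda')$ that swaps $\maj$ and $\inv$.'' Such a direct bijection is \emph{not} available: finding a combinatorial, involution-level explanation of the $q,t$-symmetry of the HHL formula is a well-known open problem (the symmetry $\widetilde{H}_{\lambda}(X;q,t)=\widetilde{H}_{\lambda'}(X;t,q)$ is known, but only via the axiomatic characterization or Haiman's geometry, as noted in the Remark immediately following \cref{thm:HHL}). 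Using it here would make the argument circular. The original HHL proof avoids this by establishing the two triangularity conditions separately, each with its own superization and sign-reversing involution; if you want a self-contained sketch, you should do the same.
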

\begin{remark}
The careful reader might notice that 
\cite[Theorem 2.2]{HHL04} looks different from 
	\eqref{eq:HHL}, because the roles of $q$ and $t$
are swapped, and the diagram of $\lambda$ as defined in 
	\cite{HHL04}
is the conjugate of the diagram of $\lambda$ we defined here.
  However, because of the 
$qt$-symmetry of Macdonald polynomials -- 
	that is, $\widetilde{H}_{\lambda}(X;q,t) = \widetilde{H}_{\lambda'}(X;t,q)$\footnote{This follows from the triangularity conditions defining 
	$\widetilde{H}_{\lambda}$ and also follows from 
	Haiman's geometric interpretation \cite{HaimanHilbert}.} --
	\eqref{eq:HHL} is equivalent to 
	\cite[Theorem 2.2]{HHL04}.
\end{remark}

While \cref{thm:HHL} is simple and elegant, it has the disadvantage of containing many terms, since it is a sum over \emph{all} fillings of  $\dg(\lambda)$ by positive integers. By contrast, our compact formula (\cref{thm:MacPerm2}) is a sum over far fewer terms---it is a sum over \emph{sorted tableaux}.  To define these sorted tableaux, we first define an order on the columns of the fillings.

\begin{figure}
  \centering
\begin{tikzpicture}[scale=.5]

\node at (-1.5,.5) {\rm row 5};
\node at (-1.5,-.5) {\rm row 4};
\node at (-1.5,-1.5) {\rm row 3};
\node at (-1.5,-2.5) {\rm row 2};
\node at (-1.5,-3.5) {\rm row 1};
\node at (-1.5,-4.5) {\rm row 0};

\cell315 \cell325 \cell336 \cell346 \cell356
\cell215 \cell225 \cell232  
\cell419 \cell429 \cell439\cell441 \cell453 \cell462 \cell472 \cell483 \cell493
\cell111 \cell121\cell131
\cell016\cell026\cell036
\cell51{$\infty$} \cell52{$\infty$}  \cell53{$\infty$} \cell54{$\infty$}  \cell55{$\infty$}  \cell56{$\infty$}  \cell57{$\infty$}  \cell58{$\infty$}  \cell59{$\infty$} 
\draw (0,-4)--(0,1)--(3,1)--(3,-2)--(5,-2)--(5,-3)--(9,-3);

\end{tikzpicture} 
	\caption{A filling in which the cells $(3,3), (1,3)$ and $(1,2)$
	form an inversion triple, and the cells $(3,1)$ and $(6,1)$ form a degenerate inversion triple.  This filling is a sorted tableau, with 
	$\perm_t(\sigma) = {3 \choose 2,1}_t {2 \choose 1,1}_t {4 \choose 2,2}_t$.}
\label{fig:perm}
\end{figure}

\begin{definition}\label{def:orderhhl}
Fix a filling $\sigma$ of a diagram,
	and consider two columns $A$ and $B$ of height $j$, with $A$ to the 
	left of $B$. Let $a_1,\dots, a_j$ and $b_1,\dots, b_j$ be the entries of columns $A$ and $B$, respectively, read from bottom to top. We say that $A \lhd B$, if either $a_1 < b_1$, or $a_i = b_i$ for $i=1, 2,  \dots, r-1$ (for some positive $r$), and the cells containing $b_r$, $a_r$ and $a_{r-1}$ do not form a counterclockwise (inversion) triple.
\begin{center}
\begin{tikzpicture}[scale=.7]
\node at (.55,1.5) {$A\quad \lhd \quad B$};
\node at (.55,-1.5) {$\neq$};
\node at (.55,-2.5) {$=$};
\node at (.55,-4.5) {$=$};
\cell50{$a_1$}
\cell5{2}{$b_1$};
\cell40{$\vdots$}; 
\cell4{2}{$\vdots$};
\graycell30{$a_{r-1}$};
\cell3{2}{$b_{r-1}$};
\graycell20{$a_{r}$}; 
\graycell2{2}{$b_{r}$}; 
\cell10{$\vdots$}; 
\cell1{2}{$\vdots$};
\cell00{$a_j$};
\cell0{2}{$b_j$};

\end{tikzpicture}
\end{center}
\end{definition}

\begin{definition}\label{def:ptableau2}
	Given a filling $\sigma$ of the diagram of a partition $\lambda$, we say that $\sigma$ is a \emph{sorted tableau of $\dg(\lambda)$} if, for all positive integers $h$, when we read all columns of height $h$ from left to right, the columns appear in weakly increasing order with respect to $\lhd$.
See \cref{fig:perm} for an example.
We write $\ST(\lambda)$ for the set of all sorted tableaux of $\dg(\lambda)$.
\end{definition}

Let $\sigma$ be a sorted tableau. First suppose that the diagram of $\lambda$ is an $m \times n$ rectangle. The $n$ columns may not all have distinct fillings: suppose that among those $n$ columns, there are $j$ distinct column fillings, with $u_1$ identical columns of the first filling, $u_2$ identical columns of the second filling, \dots, $u_j$ identical columns all containing the $j$th filling. Define 
\begin{equation}\label{eq:perm}
\perm_t(\sigma) = {n \choose{u_1,\ldots,u_j}}_t,
\end{equation}
the $t$-analogue of the multinomial coefficient ${n \choose {u_1,\ldots,u_j}}$.
Now suppose $\sigma$ is a sorted tableau which is a concatenation of rectangular sorted tableaux $\sigma_1,\dots, \sigma_{\ell}$, all of different heights.  Define $\perm_t(\sigma) = \prod_{i=1}^{\ell} \perm_t(\sigma_i).$ 

Our main result in this section is a  compact formula for $\widetilde{H}_{\lambda}$.
\begin{theorem}{\label{thm:MacPerm2}}
Let $\lambda$ be a partition.  The modified Macdonald polynomial
	$\widetilde{H}_{\lambda}(X;q,t)$ 
	equals 
	\[
	\widetilde{H}_{\lambda}(X;q,t) = \sum_{\sigma\in \ST(\lambda)} x^\sigma t^{\inv(\sigma)} q^{\maj(\sigma)} 
	\perm_t(\sigma),
\]
	where the sum is over all sorted tableaux of $\dg(\lambda)$.
\end{theorem}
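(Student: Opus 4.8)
The plan is to derive the formula from the Haglund--Haiman--Loehr expansion (\cref{thm:HHL}) by reorganizing its sum over \emph{all} fillings of $\dg(\lambda)$ into a sum over sorted tableaux. I would argue one monomial at a time: fixing a content $M$ (a multiset of positive integers of size $|\lambda|$), it is enough to prove the resulting identity in $q,t$ between the coefficients of $x^{M}$ on the two sides. As a preliminary one checks that $\lhd$ restricts to a linear order on the columns of each fixed height, so that $\ST(\lambda)$ and the associated ``sort'' map are unambiguous.

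The first reduction exploits two invariances. Since descents occur within a single column and legs depend only on the shape, $\maj(\sigma)$ and $x^{\sigma}$ are unchanged when columns of equal height are permuted among one another; and since a partition diagram is the left-to-right concatenation of rectangular blocks $B_{1},B_{2},\dots$ of strictly decreasing heights, a filling of content $M$ is recorded, up to such permutations, by the data $(\mathcal M_{1},\mathcal M_{2},\dots)$ listing the multiset of columns occupying each block. The statistic $\inv$ splits as $\inv(\sigma)=\sum_{i}\inv_{i}(\sigma)+\inv^{\times}(\sigma)$, where $\inv_{i}$ counts inversion triples (degenerate ones included) whose two columns both lie in $B_{i}$ and $\inv^{\times}$ counts the rest; a direct check shows that $\inv^{\times}$, like $\maj$, is a function of $(\mathcal M_{1},\mathcal M_{2},\dots)$ alone, whereas $\inv_{i}$ depends only on the left-to-right ordering of the columns of $B_{i}$. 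Grouping both sums by $(\mathcal M_{i})$ and cancelling the common prefactor $q^{\maj}t^{\inv^{\times}}$ then reduces the whole theorem to a statement about a single rectangular block: for a multiset $\mathcal M$ of $w$ columns of a fixed height (with distinct column-fillings of multiplicities $u_{1},\dots,u_{j}$), one must compare $\sum_{\rho}t^{\inv_{B}(\rho)}$, summed over the distinct orderings $\rho$ of $\mathcal M$, with $t^{\inv_{B}(\mathrm{sort}\,\mathcal M)}\binom{w}{u_{1},\dots,u_{j}}_{t}$.

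This block comparison is the heart of the matter, and it does \emph{not} hold as a termwise (per-$\mathcal M$) identity: interchanging two adjacent columns $A\lhd B$ of a block need not change $\inv_{B}$ by exactly $1$ -- already for $\lambda=(2,2)$ one can find columns with $A\lhd B$ and $\inv(AB)=\inv(BA)$, even though that block contributes a factor $1+t$. The $t$-multinomial $\perm_{t}(\sigma)$ therefore only emerges after cancellation among different choices of $\mathcal M$ carrying the same monomial $q^{\maj}t^{\inv^{\times}}$. My approach would be to absorb these cancellations into a sign-reversing involution. Expanding $\perm_{t}(\sigma)=\sum_{\rho}t^{\operatorname{inv}_{\lhd}(\rho)}$ over blockwise orderings $\rho$ (with $\operatorname{inv}_{\lhd}$ counting column-pairs out of $\lhd$-order) realizes the right-hand side as a weighted sum over pairs $(\sigma,\rho)$ with $\sigma\in\ST(\lambda)$; the ``naive'' rule sends $(\sigma,\rho)$ to the filling obtained by applying $\rho$ to $\sigma$ block by block, for which the $t$-exponent would be the bilinear guess $\inv(\sigma)+\sum_{i}\operatorname{inv}_{\lhd}(\rho_{i})$. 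The defect between this guess and the true $\inv$ of the image is, after analyzing for each inverted column-pair the first row where the two columns disagree, a signed sum of ``higher-row'' corrections; the involution would match pairs whose image undershoots the guess against pairs that overshoot, in a way that preserves $x^{\sigma}$ and $\maj$, its fixed points being exactly the terms $x^{\sigma}t^{\inv(\sigma)}q^{\maj(\sigma)}\perm_{t}(\sigma)$ with $\sigma$ sorted.

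The step I expect to be genuinely difficult is constructing that involution explicitly and verifying it is a complete fixed-point-free matching off the sorted terms -- equivalently, proving that the discrepancies between $\inv$ and the naive bilinear count always pair off. The rest (the two invariances, the splitting of $\inv$, and the bookkeeping with the $t$-multinomial) should be routine. If the involution turns out to be unwieldy, an alternative is to establish first the compact formula for the integral form $J_{\lambda}$ and then deduce \cref{thm:MacPerm2} from it via the plethystic relation between $\widetilde H_{\lambda}$ and $J_{\lambda}$, translating the standardization map that implements the plethysm into the language of sorted tableaux.
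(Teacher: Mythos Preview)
Your setup is sound and matches the paper's: start from the HHL formula, reduce to rectangular blocks using that $\maj$, $x^{\sigma}$, and the cross-block contribution $\inv^{\times}$ are invariant under permuting columns of equal height. You are also right that the per-multiset identity fails: for a fixed multiset $\mathcal M$ of columns, $\sum_{\rho}t^{\inv_B(\rho)}$ need \emph{not} equal $t^{\inv_B(\mathrm{sort}\,\mathcal M)}\perm_t$. Your example with $\lambda=(2,2)$ is real.

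The gap is in how you propose to repair this. Both sides of the identity are manifestly term-positive in $\mathbb N[q,t]$, so there is nothing to ``sign-reverse''; what is needed is a weight-preserving bijection from pairs $(\sigma,\rho)$ (with $\sigma$ sorted) to arbitrary fillings $\tau$ satisfying $\inv(\tau)=\inv(\sigma)+\inv_{\lhd}(\rho)$ and $\maj(\tau)=\maj(\sigma)$. Your ``naive'' map $\tau=\rho\cdot\sigma$ (permuting whole columns) fails precisely because swapping adjacent columns can change $\inv$ by $0$ or $2$, not $1$, and your sketch of an involution that would absorb the defects is not a concrete construction -- you say as much yourself. This is not a bookkeeping issue; it is the entire content of the theorem.

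The paper supplies exactly the missing device: the Loehr--Niese \emph{inversion flip operators} $\mathcal T_i^{(r)}$. Crucially, $\mathcal T_i^{(r)}$ does \emph{not} swap columns $i$ and $i{+}1$; it swaps their entries at the first row $r$ where they differ and then propagates upward row by row, stopping when the orientation of the triple above is unaffected. One proves (\cref{lem:maj}, \cref{lem:coinv}) that each $\mathcal T_i^{(r)}$ preserves $\maj$ and changes $\inv$ by exactly $\pm1$. Composing these along positive distinguished subexpressions gives, for each sorted $\sigma$, a family $\mathcal F(\sigma)$ that is \emph{not} the column-permutation orbit of $\sigma$, but that does partition all fillings (\cref{prop:partition}) and satisfies $\sum_{\tau\in\mathcal F(\sigma)}t^{\inv(\tau)}=t^{\inv(\sigma)}\perm_t(\sigma)$ (\cref{lem:genfun2}). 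In your $(2,2)$ example with columns $A=(1,3)$ and $B=(2,1)$ (bottom to top), the two families are $\{AB,\ (2,3)(1,1)\}$ and $\{(1,1)(2,3),\ BA\}$, not $\{AB,BA\}$ and its complement -- which is why your per-$\mathcal M$ grouping cannot be salvaged without inventing operators of this kind.

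Your alternative route via $J_\lambda$ and plethysm would also require substantial new work (the plethystic passage from $J_\lambda$ to $\widetilde H_\lambda$ does not interact simply with the ``ordered nonattacking filling'' model), and the paper does not take it.
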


\begin{example} We use \cref{thm:MacPerm2} to compute
	$\widetilde{H}_{(2,1,1)}(x_1, x_2, x_3;q,t)$
 in \cref{fig:alternativeDef}.
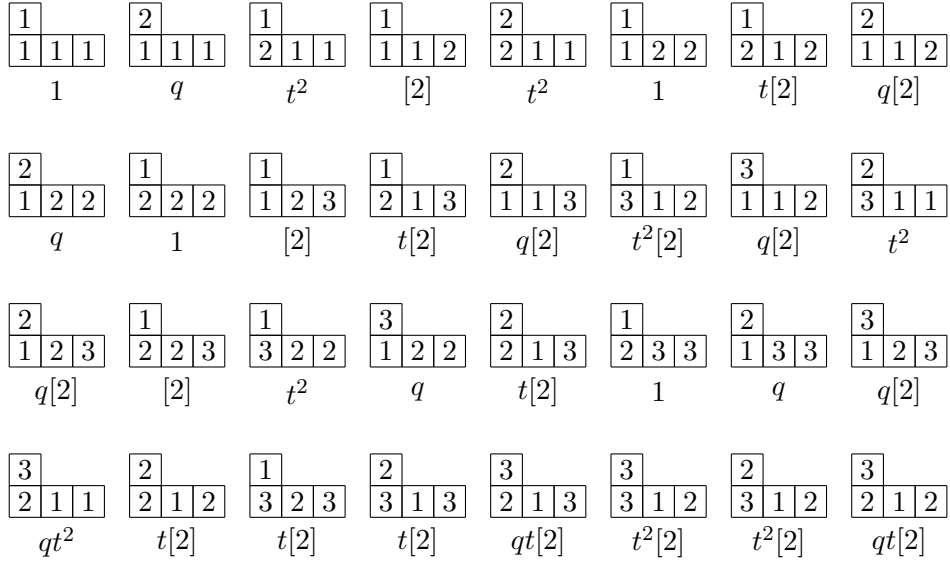
\begin{figure}
\begin{center}
\begin{tikzpicture}[node distance=2cm]
\def \sh {1.6};
\node at (0,0) {$\tableau{ 1\\1&1&1 }$};
\node at (0,-.75) {$1$};
\node at (\sh,0) {$\tableau{ 2\\1&1&1 }$};
\node at (\sh,-.75) {$q$};
\node at (2*\sh,0) {$\tableau{ 1\\2&1&1 }$};
\node at (2*\sh,-.75) {$t^2$};
\node at (3*\sh,0) {$\tableau{ 1\\1&1&2 }$};
\node at (3*\sh,-.75) {$[2]$};
\node at (4*\sh,0) {$\tableau{ 2\\2&1&1 }$};
\node at (4*\sh,-.75) {$t^2$};
\node at (5*\sh,0) {$\tableau{ 1\\1&2&2 }$};
\node at (5*\sh,-.75) {$1$};
\node at (6*\sh,0) {$\tableau{ 1\\2&1&2 }$};
\node at (6*\sh,-.75) {$t[2]$};
\node at (7*\sh,0) {$\tableau{ 2\\1&1&2 }$};
\node at (7*\sh,-.75) {$q[2]$};
\node at (0,-2) {$\tableau{ 2\\1&2&2 }$};
\node at (0,-2.75) {$q$};
\node at (\sh,-2) {$\tableau{ 1\\2&2&2 }$};
\node at (\sh,-2.75) {$1$};
\node at (2*\sh,-2) {$\tableau{ 1\\1&2&3 }$};
\node at (2*\sh,-2.75) {$[2]$};
\node at (3*\sh,-2) {$\tableau{ 1\\2&1&3 }$};
\node at (3*\sh,-2.75) {$t[2]$};
\node at (4*\sh,-2) {$\tableau{ 2\\1&1&3 }$};
\node at (4*\sh,-2.75) {$q[2]$};
\node at (5*\sh,-2) {$\tableau{ 1\\3&1&2 }$};
\node at (5*\sh,-2.75) {$t^2[2]$};
\node at (6*\sh,-2) {$\tableau{ 3\\1&1&2 }$};
\node at (6*\sh,-2.75) {$q[2]$};
\node at (7*\sh,-2) {$\tableau{ 2\\3&1&1 }$};
\node at (7*\sh,-2.75) {$t^2$};

\node at (0,-4) {$\tableau{ 2\\1&2&3 }$};
\node at (0,-4.75) {$q[2]$};
\node at (\sh,-4) {$\tableau{ 1\\2&2&3 }$};
\node at (\sh,-4.75) {$[2]$};
\node at (2*\sh,-4) {$\tableau{ 1\\3&2&2 }$};
\node at (2*\sh,-4.75) {$t^2$};
\node at (3*\sh,-4) {$\tableau{ 3\\1&2&2 }$};
\node at (3*\sh,-4.75) {$q$};
\node at (4*\sh,-4) {$\tableau{ 2\\2&1&3 }$};
\node at (4*\sh,-4.75) {$t[2]$};
\node at (5*\sh,-4) {$\tableau{ 1\\2&3&3 }$};
\node at (5*\sh,-4.75) {$1$};
\node at (6*\sh,-4) {$\tableau{ 2\\1&3&3 }$};
\node at (6*\sh,-4.75) {$q$};
\node at (7*\sh,-4) {$\tableau{ 3\\1&2&3 }$};
\node at (7*\sh,-4.75) {$q[2]$};

\node at (0*\sh,-6) {$\tableau{ 3\\2&1&1 }$};
\node at (0*\sh,-6.75) {$qt^2$};
\node at (1*\sh,-6) {$\tableau{ 2\\2&1&2 }$};
\node at (1*\sh,-6.75) {$t[2]$};
\node at (2*\sh,-6) {$\tableau{ 1\\3&2&3 }$};
\node at (2*\sh,-6.75) {$t[2]$};
\node at (3*\sh,-6) {$\tableau{ 2\\3&1&3 }$};
\node at (3*\sh,-6.75) {$t[2]$};
\node at (4*\sh,-6) {$\tableau{ 3\\2&1&3 }$};
\node at (4*\sh,-6.75) {$qt[2]$};
\node at (5*\sh,-6) {$\tableau{ 3\\3&1&2 }$};
\node at (5*\sh,-6.75) {$t^2[2]$};
\node at (6*\sh,-6) {$\tableau{ 2\\3&1&2 }$};
\node at (6*\sh,-6.75) {$t^2[2]$};
\node at (7*\sh,-6) {$\tableau{ 3\\2&1&2 }$};
\node at (7*\sh,-6.75) {$qt[2]$};

\end{tikzpicture}
\end{center}
        \caption{We compute
		$\widetilde{H}_{(2,1,1)}(x_1,x_2,x_3;q,t)$ by 
		adding the weights of the sorted tableaux of 
		$\dg((2,1,1))$. In the figure above, we've listed $t^{\inv(\sigma)} q^{\maj(\sigma)} \perm_t(\sigma)$ below each sorted tableau $\sigma$, but have omitted $x^{\sigma}$ to save space.  Here $[i]$ denotes $[i]_t$. }\label{fig:alternativeDef}
\end{figure}
\end{example}

To prove \cref{thm:MacPerm2}, we use the \emph{inversion flip operators}
of Loehr and Niese~\cite{LoeNie12}, 
which act on fillings of a given diagram. These operators fix the $\maj$ statistic and change the $\inv$ statistic by one; in other words, they change the number of counterclockwise inversion triples  by one.

\begin{definition}
	Let $\sigma$ be a filling with columns $i,i+1$ having different entries at row $r$, and identical entries $\sigma(i,s)=\sigma(i+1,s)$
	in all rows $s$ below $r$ (i.e., $s<r$). The operator $\mathcal{T}_i^{(r)}$ consists of two steps.  The first step swaps the entries in the cells $(i,r)$ and $(i,r+1)$.
The second step adjusts for any changes to triples that might have happened in higher rows due to this swapping.  The precise statement of these steps is as follows.
\begin{enumerate}
\item Switch the entries $\sigma(i,r)$ and $\sigma(i+1,r)$.
\item Consider the triple formed by the cells $(i+1,r+1)$, $(i,r+1)$, and $(i,r)$, if it exists.  (If it does not exist, terminate the process.)
\begin{enumerate}
	\item If this triple is a counterclockwise triple before \emph{and} after the swap, terminate the process.
	\item If this triple is a clockwise triple before \emph{and} after the swap, terminate the process.
\item Otherwise, switch the entries $\sigma(i,r+1)$ and $\sigma(i+1,r+1)$ and go to step (2), after replacing $r$ by $r+1$.
\end{enumerate}
\end{enumerate}
\end{definition}

Note that $\mathcal{T}_i^{(r)}$ is well-defined if and only if the entries in columns $i,i+1$ are different at row $r$ and identical at all rows below $r$. Thus for each $i$, there is at most one $r$ such that $\mathcal{T}_i^{(r)}$ is well-defined. However, we keep the $r$ in the superscript to improve the readability of our proofs.

\begin{example}
To apply the operator $\mathcal{T}_1^{(2)}$ to the filling $\sigma$ shown below, first swap the entries in row $2$.  Then notice that the clockwise triple $\tableau{2&5  \\ 1}$  becomes a counterclockwise triple $\tableau{2&5 \\ 4}$.  So we must swap the $2$ and $5$ in row $3$.  At this point the clockwise triple $\tableau{3&4 \\ 2}$ remains a clockwise  triple $\tableau{3&4 \\ 5}$ and so the procedure is complete.

\vspace{0.1in}
\begin{tikzpicture}[scale=2]
\put(130,-25){$\sigma=$};
\put(155,0){\tableau{3 & 3 \\ 2 & 4 \\ 3 & 4 \\ 2 & 5 \\ 1 & 4 \\ 3 & 3 }} ;
\put(185,-25){$\longrightarrow$};
\put(205,0){\tableau{3 & 3 \\ 2 & 4 \\ 3 & 4 \\ 2 & 5 \\ 4 & 1 \\ 3 & 3 }};
\put(235,-25){$\longrightarrow$};
\put(255,0){\tableau{3 & 3 \\ 2 & 4 \\ 3 & 4 \\ 5 & 2 \\ 4 & 1 \\ 3 & 3 }};
\end{tikzpicture}
\end{example}

\vspace{.8in}

\subsection{How the operators affect the $\maj$ and $\inv$ statistics}

\cref{lem:maj} and 
\cref{lem:coinv}
 appear as Theorem 5.3 in 
the work of Loehr and Niese \cite{LoeNie12}.  For completeness, we provide here
a careful proof of each, including details for several cases left to the reader in~\cite{LoeNie12}.

\begin{lemma}\label{lem:involution}\cite[Lemma 5.2]{LoeNie12}.
Each operator $\mathcal{T}_i^{(r)}$ is an involution.
\end{lemma}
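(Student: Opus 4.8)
The goal is to show that each operator $\mathcal{T}_i^{(r)}$ is an involution. The natural approach is to unwind the definition and argue that applying $\mathcal{T}_i^{(r)}$ twice returns the original filling, by tracking exactly which rows get their entries in columns $i,i+1$ swapped. First I would observe that $\mathcal{T}_i^{(r)}$ operates only on columns $i$ and $i+1$, and only on a contiguous block of rows: it starts by swapping row $r$, then possibly row $r+1$, then possibly $r+2$, and so on, stopping at the first row $r' \geq r$ where the triple $(i+1,r'+1),(i,r'+1),(i,r')$ either exists and has the same orientation (clockwise vs.\ counterclockwise) before and after the row-$r'$ swap, or does not exist at all. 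So the effect of $\mathcal{T}_i^{(r)}$ is: choose a ``stopping row'' $r^* = r^*(\sigma) \geq r-1$ and swap the entries in columns $i,i+1$ in every row from $r$ up to $r^*$ (where by convention $r^* = r-1$ would mean... but actually $r^* \geq r$ always since we always perform step (1)).

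\textbf{Key steps.} The heart of the argument is the following claim: if $\sigma' = \mathcal{T}_i^{(r)}(\sigma)$, then $\mathcal{T}_i^{(r)}$ is well-defined on $\sigma'$ (which is clear, since columns $i,i+1$ of $\sigma'$ still agree below row $r$ and still differ at row $r$ — the row-$r$ entries were merely swapped), and moreover $\mathcal{T}_i^{(r)}$ produces the \emph{same} stopping row on $\sigma'$ as on $\sigma$, performing the identical sequence of row-swaps, thereby undoing them. To prove this I would induct on the row index, going upward from $r$. The base case is row $r$: in both $\sigma$ and $\sigma'$ the operator swaps row $r$ unconditionally. For the inductive step, suppose the operator has swapped rows $r, r+1, \dots, k$ in $\sigma$ (so we have not yet terminated after processing row $k-1$, i.e.\ the triple on rows $k-1,k$ flipped orientation), and we now examine the triple $T$ on cells $(i+1,k+1),(i,k+1),(i,k)$. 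The decision at this step depends only on the entries in those three cells. After the first application, the entries in row $k+1$ of columns $i,i+1$ have \emph{not yet} been touched (they are still as in $\sigma$), while the entry $\sigma'(i,k)$ equals $\sigma(i+1,k)$ (it got swapped). So when we run $\mathcal{T}_i^{(r)}$ on $\sigma'$ and reach row $k$, the relevant triple has cells $(i+1,k+1),(i,k+1)$ with the \emph{same} entries as in $\sigma$, and cell $(i,k)$ holding $\sigma(i+1,k)$ instead of $\sigma(i,k)$ — which is precisely the \emph{post-swap} configuration that $\mathcal{T}_i^{(r)}$ on $\sigma$ compared against. The key symmetry to check is that ``orientation changes when we swap row $k$'' is a symmetric relation between the pre- and post-swap states: if swapping row $k$ in $\sigma$ flips the triple orientation, then swapping row $k$ in $\sigma'$ (which reverses that same swap) also flips it, so the operator on $\sigma'$ also proceeds to row $k+1$; conversely if it does not flip (or the triple doesn't exist), the operator on $\sigma'$ also terminates. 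In either case the sequence of swaps performed by $\mathcal{T}_i^{(r)}$ on $\sigma'$ is identical to that on $\sigma$, and since each such swap is its own inverse and they act on disjoint pairs of cells (one pair per row), the composition is the identity.

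\textbf{Main obstacle.} The subtle point — and the one I would spend the most care on — is the bookkeeping around which rows have been ``already modified'' versus ``not yet modified'' at the moment the stopping condition is tested, and confirming that the orientation-flip condition as checked on $\sigma$ (comparing its pre-swap and post-swap row-$k$ configurations) coincides exactly with the condition as checked on $\sigma'$ when the operator reaches row $k$ there. This requires carefully noting that $\mathcal{T}_i^{(r)}$ on $\sigma$, when it decides whether to continue past row $k$, looks at row $k+1$ in its \emph{original} state and row $k$ in its \emph{swapped} state; and that $\mathcal{T}_i^{(r)}$ on $\sigma'$, upon reaching row $k$, finds row $k+1$ in the original state (untouched by the first application, since the first application stopped at or before some row and — by the inductive hypothesis — we are still in the range it modified) and row $k$ in the state $\sigma'$ presents, whose swapped version is $\sigma$'s original row $k$. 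So the pre/post pair $(\sigma, \sigma')$ at row $k$ is literally the same unordered pair of configurations in both runs, which is why ``orientation unchanged'' is a well-defined symmetric predicate and the two runs agree. Once this is pinned down, the involution property follows immediately; there is no hard inequality or case explosion beyond the standard ``clockwise vs.\ counterclockwise'' trichotomy already recorded in the definition of a triple.
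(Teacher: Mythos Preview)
Your approach is the natural direct verification that the paper merely alludes to (the paper's own proof is the single sentence ``this follows directly from the definition,'' citing Loehr--Niese). However, your inductive step contains a genuine gap.

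You assert that when the second application of $\mathcal{T}_i^{(r)}$ reaches row $k$, the entries in row $k+1$ of columns $i,i+1$ are ``untouched by the first application \dots\ still as in $\sigma$.'' This is false in exactly the case you need it: if the first application \emph{continued} past row $k$ (so $k<r^*$), then it already swapped row $k+1$, giving $\sigma'(i,k+1)=\sigma(i+1,k+1)$ and $\sigma'(i+1,k+1)=\sigma(i,k+1)$. Consequently your conclusion that ``the pre/post pair $(\sigma,\sigma')$ at row $k$ is literally the same unordered pair of configurations in both runs'' is incorrect when $k<r^*$; only when $k=r^*$ is row $k+1$ unchanged in $\sigma'$.

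The repair is short. Write $P=\sigma(i,k)$, $Q=\sigma(i+1,k)$, $R=\sigma(i,k+1)$, $S=\sigma(i+1,k+1)$, and let $O(a,b,c)$ denote the orientation of the triple with those entries at cells $(i+1,k+1),(i,k+1),(i,k)$. The first application's decision at row $k$ is whether $O(S,R,P)\neq O(S,R,Q)$. If it terminated ($k=r^*$), row $k+1$ of $\sigma'$ equals that of $\sigma$ and the second application faces the identical inequality with the roles of $P,Q$ exchanged, so it also terminates. If it continued ($k<r^*$), row $k+1$ of $\sigma'$ is swapped, and the second application's decision is whether $O(R,S,Q)\neq O(R,S,P)$. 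Since interchanging the two top entries of a triple reverses its orientation (immediate from the cyclic-order definition of inversion triples), $O(R,S,\cdot)=\overline{O(S,R,\cdot)}$, and the two inequalities are equivalent. Hence the second application makes the same continue/terminate decision at every row, and the involution property follows as you intended.
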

\begin{proof}
As in \cite{LoeNie12}, this follows directly from the definition that each operator is an involution.  
\end{proof}

\begin{lemma}\label{lem:maj}
Let $\sigma$ be a filling. Fix row $r$ and column $i$ within $\sigma$.  Assume the entries in columns $i$ and $i+1$ of $\sigma$ are identical in rows $r-1$ and below. Then
 \[\maj(\mathcal{T}_i^{(r)} (\sigma)) = \maj(\sigma).\]
\end{lemma}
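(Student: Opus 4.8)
The plan is to track exactly which cells can lie in $\Des(\sigma)$ before and after applying $\mathcal{T}_i^{(r)}$, and show that the multiset of pairs $(\text{descent},\,\text{leg of that descent})$ is unchanged. First I would observe that $\mathcal{T}_i^{(r)}$ only ever alters entries inside columns $i$ and $i+1$, and only in rows $r, r+1, r+2, \dots$ up to where the cascade terminates; every cell outside these two columns, and every cell of these two columns in rows $r-1$ and below, is fixed. Moreover, by hypothesis, the entries in columns $i$ and $i+1$ agree in rows $r-1$ and below, and the operator's first step swaps the (distinct) entries at row $r$. So the only descent-status changes that can occur are at cells of the form $(i,s)$ or $(i+1,s)$ for $s \geq r$, and I want to pair these up between $\sigma$ and $\mathcal{T}_i^{(r)}(\sigma)$.

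The key structural fact to extract is that, at each stage of the cascade, the operator swaps a pair of entries $(i,s)\leftrightarrow(i+1,s)$ in some row $s$, and it propagates upward to row $s+1$ precisely when the triple $\{(i+1,s+1),(i,s+1),(i,s)\}$ changes orientation. I would argue by analyzing the bottom of the affected region first: at row $r$, since the entries below (in row $r-1$, which exists only if $r \geq 2$; if $r=1$ the relevant cells are the basement $\infty$'s, which are equal) are equal in columns $i$ and $i+1$, swapping $\sigma(i,r)$ and $\sigma(i+1,r)$ does \emph{not} change whether $(i,r)$ or $(i+1,r)$ is a descent: $(i,r)$ is a descent iff $\sigma(i,r) > \sigma(i,r-1)$, and after the swap $(i,r)$ is a descent iff $\sigma(i+1,r) > \sigma(i,r-1) = \sigma(i+1,r-1)$ iff $(i+1,r)$ was a descent in $\sigma$; symmetrically the other way. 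So the set $\{(i,r),(i+1,r)\} \cap \Des$ has the same cardinality before and after, and since $(i,r)$ and $(i+1,r)$ have the same leg, the contribution $\sum_{s\in\Des,\, s\in\{(i,r),(i+1,r)\}}(\leg(s)+1)$ is preserved. The same argument applies verbatim at every row $s$ where the cascade performs a swap, because the inductive structure of the operator guarantees that at the moment we swap row $s$, the entries in row $s-1$ of columns $i,i+1$ have \emph{already} been swapped (or were equal to begin with at row $r-1$), hence are equal after that prior swap only in the trivial sense — wait, here I need to be careful: after swapping row $r$, the row-$r$ entries of columns $i,i+1$ are no longer equal, so the descent-preservation argument for row $r+1$ needs the row-$r$ entries, and I must check that swapping row $r+1$ also swaps the row-$r$ entries' roles consistently. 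The right way to phrase it: whenever the operator swaps rows $s$ and $s+1$ simultaneously (which is what happens when the cascade propagates), it swaps the pair of columns in \emph{both} rows, so $(\sigma(i,s),\sigma(i,s+1))$ and $(\sigma(i+1,s),\sigma(i+1,s+1))$ get exchanged as pairs; then $(i,s+1)$ is a descent iff $\sigma(i,s+1)>\sigma(i,s)$, which after the double swap becomes $\sigma(i+1,s+1)>\sigma(i+1,s)$, i.e. matches the old status of $(i+1,s+1)$. Again legs agree, so the contribution is preserved.

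I would then handle the two termination cases: if the cascade stops at row $s$ without swapping row $s$ (i.e. the triple $\{(i+1,s),(i,s),(i,s-1)\}$ keeps its orientation, or doesn't exist), then rows $s$ and above in columns $i,i+1$ are untouched by the operator, so they contribute identically; but I must double-check the boundary descents at row $s$ between the untouched region above and the swapped row $s-1$ below. Concretely, the cell $(i,s)$: its descent status depends on $\sigma(i,s)$ (untouched) and $\sigma(i,s-1)$ (swapped), so it \emph{can} change — but then so does $(i+1,s)$, and since the pair $\{\sigma(i,s-1),\sigma(i+1,s-1)\}$ is just transposed, exactly the same "one flips on, the other flips off, or neither flips" dichotomy holds, preserving the count and (since equal legs) the weighted count. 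Assembling: $\maj(\mathcal{T}_i^{(r)}(\sigma)) - \maj(\sigma) = \sum_{s\geq r}\big[(\text{contribution of }\{(i,s),(i+1,s)\}\text{ after}) - (\text{before})\big] = 0$ term by term. The main obstacle I anticipate is the bookkeeping at the termination row — making sure the "boundary" cell $(i,s)$ whose descent depends on one swapped and one unswapped row is handled correctly, and confirming that the operator's definition indeed guarantees the row below any swapped row has columns $i,i+1$ transposed (not in some other state); this is exactly where the inductive/cascading definition of $\mathcal{T}_i^{(r)}$ must be invoked carefully, and where Loehr–Niese left details to the reader.
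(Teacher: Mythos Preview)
Your overall architecture matches the paper's: track descents only in columns $i,i+1$ at rows $\ge r$, handle the bottom row $r$ using the equal entries below, handle rows where both levels are swapped by the ``columns exchanged as pairs'' observation, and then deal with the boundary between the swapped and unswapped regions. The first two pieces are fine.

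The gap is precisely at the termination boundary, and it is a real gap, not just bookkeeping. You assert that when row $s-1$ is swapped but row $s$ is not, ``since the pair $\{\sigma(i,s-1),\sigma(i+1,s-1)\}$ is just transposed, exactly the same `one flips on, the other flips off, or neither flips' dichotomy holds.'' That dichotomy does \emph{not} hold in general. Take $c=\sigma(i,s)=5$, $d=\sigma(i+1,s)=3$ (untouched) above $a=\sigma(i,s-1)=4$, $b=\sigma(i+1,s-1)=1$ (swapped). Before the swap both $(i,s)$ and $(i+1,s)$ are descents; after the swap only $(i,s)$ is. The descent count drops from $2$ to $1$. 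What saves you is exactly the piece of information you never used: in this example the triple $\{(i+1,s),(i,s),(i,s-1)\}$ changes orientation (clockwise to counterclockwise), so by the definition of $\mathcal{T}_i^{(r)}$ the cascade would \emph{not} have terminated at row $s-1$. The paper's proof is a case analysis showing that whenever the boundary descent count would change, the triple orientation also changes, contradicting termination. You must invoke the termination condition here; merely transposing the lower pair is not enough.
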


\begin{proof}
In what follows, we make the convention that if the cell $(i,r)$ is not part of the diagram of $\sigma$, then $\sigma(i,r)=0$.  
To prove that $\maj$ is fixed under the application of a $\mathcal{T}_i^{(r)}$ operator, note that the contribution to $\maj$ from row $r$ remains the same since $\sigma(i, r-1)=\sigma(i+1, r-1)$.  Consider a higher row $j$ of $\sigma$ in column $i$ ($j\ge r$) and label the entries in rows $j+1$ and $j$ in columns $i$ and $i+1$ as shown below.

\begin{center}
\begin{tikzpicture}[scale=.5]
\cell01{$c$} \cell02{$d$}
\cell11{$a$} \cell12{$b$}

\node at (-2.65,.5) {\rm row $j+1$};
 \node at (-2,-.5) {\rm row $j$};
\node at (.5, 1.5) {$i$};
\node at (1.7, 1.5) {$i+1$};
\end{tikzpicture}
\end{center}

If $a$ and $b$ are not swapped after the application of $\mathcal{T}_i^{(r)}$, then there is nothing to check.  We now consider the possible repercussions of swapping $a$ and $b$.  (Note that we may assume $c \neq d$ since otherwise there would also be nothing to check.  Also, if $a$ and $b$ as well as $c$ and $d$ are swapped, the contributions to $\maj$ from this portion of the filling remain the same.) 

Assume therefore that the operator $\mathcal{T}_i^{(r)}$ sends $\tableau{ c & d \\ a & b}$ to $\tableau{ c & d \\ b & a}$\ ; that is, $c$ and $d$ are not swapped under $\mathcal{T}_i^{(r)}$.  If $c>a$ and $d>b$ but $d \le a$, then $b<d\le a<c$, implying that $d, c, a $ forms a clockwise triple in $\sigma$ while $d, c,b$ forms a counterclockwise triple in $\mathcal{T}_i^{(r)} (\sigma)$, contradicting the conditions of the adjustment process.  Similarly if $c>a$ and $d>b$ but $c \le b$, then $a<c\le b<d$, implying that $a,c,d$ form a clockwise triple in $\sigma$ while $b,c,d$ form a counterclockwise triple in $\mathcal{T}_i^{(r)}(\sigma)$, also a contradiction.

Next assume that $c>a$ but $d \le b$.  If $d > a$ and $c >b$, then $a<d \le b<  c$, implying that $a,d,c$ form a counterclockwise triple in $\sigma$ but $d,b,c$ form a clockwise triple in $\mathcal{T}_i^{(r)}(\sigma)$, a contradiction.  Similarly, if $d \le a$ and $c \le b$, then $d \le a<c \le b$, implying that $d,a,c$ form a clockwise triple in $\sigma$ while $d,c,b$ form a counterclockwise triple in $\mathcal{T}_i^{(r)}(\sigma)$.  An analogous argument shows that when $d>b$ and $c \le a$, the contribution to the $\maj$ statistic remains fixed, as well.

Finally, assume $c \le a$ and $d \le b$.  If $d>a$, then $c \le a < d \le b$, implying that $c,a,d$ form a counterclockwise triple in $\sigma$ while $b,c,d$ form a clockwise triple in $\mathcal{T}_i^{(r)}(\sigma)$ in this collection of cells.  Similarly, if $c > b$, then $d \le b < c \le a$, implying that $d,c,a$ form a counterclockwise triple  in $\sigma$ while $d,b,c$ form a clockwise triple in $\mathcal{T}_i^{(r)}(\sigma)$, letting us conclude that all cases lead to contradictions.  Therefore, although the locations of the descents might change from $\sigma$ to $\mathcal{T}_i^{(r)}(\sigma)$, the total contribution to the major index remains the same.
\end{proof}

\begin{lemma}\label{lem:coinv}
Let $\sigma$ be a filling. Fix row $r$ and column $i$ within $\sigma$.  We assume that the entries in cells $(i,r)$ and $(i+1,r)$ are distinct, and that the entries in columns $i$ and $i+1$ of $\sigma$ are identical in rows $r-1$ and below.  Let $T$ be the (possibly degenerate) triple consisting of the three  cells $(i+1,r), (i,r), (i,r-1)$.  Then 
\[
\inv( \mathcal{T}_i^{(r)} (\sigma)) = \inv(\sigma) + \begin{cases} -1, & \textrm{if } $T$ \textrm{ is a counterclockwise triple}\\
1,& \textrm{otherwise.}
\end{cases}
\]
\end{lemma}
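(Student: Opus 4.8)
The plan is to expand $\inv(\sigma')-\inv(\sigma)$, where $\sigma':=\mathcal{T}_i^{(r)}(\sigma)$, as a sum over all triples of $\dg(\lambda)$ of the change in the indicator ``is an inversion triple,'' and to show that this change is $0$ for every triple except $T$, whose orientation always reverses; reversal contributes $-1$ when $T$ is an inversion triple in $\sigma$ and $+1$ otherwise, which is exactly the asserted formula. Let $\{r,r+1,\dots,r+k\}$ be the set of rows in which $\mathcal{T}_i^{(r)}$ exchanges the entries of columns $i$ and $i+1$, so that $\sigma$ and $\sigma'$ agree at every other cell; note $r+k\le\lambda_{i+1}\le\lambda_i$. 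From the definition of the operator one extracts two facts: (a) for $r\le s\le r+k-1$, exchanging the entries of $(i,s)$ and $(i+1,s)$ while leaving row $s+1$ untouched reverses the orientation of the triple $\tau_s:=\{(i+1,s+1),(i,s+1),(i,s)\}$, and moreover $\sigma(i,s+1)\ne\sigma(i+1,s+1)$; and (b) either $\tau_{r+k}\notin\dg(\lambda)$, or replacing the entry of $(i,r+k)$ by $\sigma(i+1,r+k)$ does not change the orientation of $\tau_{r+k}$. I will also use the elementary fact that interchanging the two \emph{top} entries of a triple whose top entries are distinct reverses its orientation; this is an immediate check against the three inversion cases in the definition, and it also holds for degenerate triples (bottom entry $\infty$). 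Applying it to $T=\{(i+1,r),(i,r),(i,r-1)\}$, whose top entries differ by hypothesis, yields the behavior of $T$, so it remains to show that every other triple keeps its orientation, either by itself or when combined with a partner triple.

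A triple can change only if one of the exchanged cells is among its three cells; such triples have their top row $j$ in $\{r,\dots,r+k+1\}$, and I would organize them as: (i) the triples $\tau_s$ for $r\le s\le r+k$ (which lie in columns $i$ and $i+1$); (ii) triples whose right column is $i$ or $i+1$ and whose left column is $<i$; (iii) triples whose left column is $i$ or $i+1$ and whose right column is $>i+1$. In case (i), if $s<r+k$ then fact (a) together with the top-interchange fact shows $\tau_s$ returns to its original orientation in $\sigma'$, and if $s=r+k$ this is fact (b) directly. A triple in case (ii) can be affected only when $j\in\{r,\dots,r+k\}$, and then the two triples $\{(i,j),(u,j),(u,j-1)\}$ and $\{(i+1,j),(u,j),(u,j-1)\}$ attached to a fixed left column $u<i$ simply swap their entry data when we pass to $\sigma'$, so the pair contains the same number of inversion triples before and after. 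Case (iii) with $j\in\{r,\dots,r+k\}$ is handled the same way: for a fixed right column $v>i+1$, the two triples $\{(v,j),(i,j),(i,j-1)\}$ and $\{(v,j),(i+1,j),(i+1,j-1)\}$ swap their data (for $j=r$ one uses that columns $i$ and $i+1$ agree below row $r$).

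The genuinely delicate case is (iii) with $j=r+k+1$, where only the bottom cells of the two partner triples change and they are not merely swapped. If column $i+1$ has height $r+k$ then these triples do not exist (nor does any with $v>i+1$, since $\lambda_v\le\lambda_{i+1}$), so there is nothing to check; otherwise $\tau_{r+k}\in\dg(\lambda)$ and fact (b) is available. I would then isolate the following lemma, writing a triple's entries as (right-top, left-top, bottom) as in the definition: if $p\ne q$ and the triple $(b',b,p)$ has the same orientation as $(b',b,q)$, then for every entry $x$ the pair $\{(x,b,p),(x,b',q)\}$ and the pair $\{(x,b,q),(x,b',p)\}$ contain the same number of inversion triples. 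Fact (b) is exactly the hypothesis with $b'=\sigma(i+1,r+k+1)$, $b=\sigma(i,r+k+1)$, $p=\sigma(i,r+k)$, $q=\sigma(i+1,r+k)$, and applying the lemma with $x=\sigma(v,r+k+1)$ to each $v>i+1$ disposes of the partner pairs. After clearing ties by an infinitesimal perturbation, the lemma reduces to the claim that interchanging the bottom values $p$ and $q$ affects the orientation in the same way for left-top entry $b$ as for $b'$; I would prove this by splitting on which of the three intervals determined by $\min(p,q)$ and $\max(p,q)$ contains the left-top entry, the hypothesis forcing $b$ and $b'$ into the same interval. Carrying out this last case analysis cleanly, along with the uniform handling of ties and of degenerate triples everywhere, is the main obstacle; the remaining steps are routine bookkeeping.
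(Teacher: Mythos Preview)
Your approach coincides with the paper's: both isolate the triple $T$, show the internal triples $\tau_s$ in columns $i,i+1$ keep their orientation via the definition of the operator, pair up the triples involving a third column so that swaps preserve the count, and handle the delicate top row $r+k+1$ by a case analysis driven by the termination condition (your fact~(b)). You are in fact more explicit than the paper about triples whose left column lies to the left of $i$; the paper dismisses these in one line, but the pairing argument you give in case~(ii) is exactly what is needed.

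For the final case the paper organizes the analysis as a table indexed by the cyclic position of the outside entry $f$ among $a,b,c,d$; your interval argument is the same idea, with one small correction required. The hypothesis does \emph{not} force $b$ and $b'$ into the same one of the three line-intervals cut out by $\min(p,q)$ and $\max(p,q)$: for instance $b<p<q<b'$ satisfies the hypothesis, since $(b',b,p)$ and $(b',b,q)$ are then both inversion triples via the clause $b\le c<a$. What the hypothesis does force is that $b$ and $b'$ lie in the same one of the two \emph{cyclic arcs} determined by $p$ and $q$, and the quantity $I(x,y,p)-I(x,y,q)$ depends only on which of those two arcs contains $y$ (the two outer line-intervals behave identically). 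With ``interval'' replaced by ``arc,'' your argument goes through and matches the paper's conclusion.
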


\begin{proof}
	Consider the columns $i$ and ${i+1}$ of $\sigma$ as shown below, where the columns are identical from the bottom to row $r-1$. Suppose also that row $r+j-1$ contains  $c_j \neq d_j$ for $1 \leq j \leq n$ for some $n \geq 1$. 
Suppose $\mathcal{T}_i^{(r)}$ is applied with the first swap occurring at $c_1,d_1$, propagating up to $c_n,d_n$, and finally terminating at row $r+n-1$.

\begin{center}
\begin{tikzpicture}[scale=0.55]
\node at (2,1.7){$\sigma$};
\node at (8,1.7){$\mathcal{T}_i^{(r)}(\sigma)$};
\node at (-3.6,-1.5){row $r+n-1$};
\node at (-2.5,-3.5){row $r$};
\node at (-3.05,-0.5){row $r+n$};
\draw[->](4,-2)--(5.5,-2) node[midway,above] {$\mathcal{T}_i^{(r)}$};
\cell01{$\vdots$}\cell02{$\vdots$}\cell0{3.5}{$\vdots$}
\cell11{$a$}\cell12{$b$}\cell1{3.5}{$f$}
\cell21{$c_n$}\cell22{$d_n$}\cell2{3.5}{$f_n$}
\cell31{$\vdots$}\cell32{$\vdots$}\cell3{3.5}{$\vdots$}
\cell41{$c_1$}\cell42{$d_1$}\cell4{3.5}{$f_1$}
\cell51{$e$}\cell52{$e$}\cell5{3.5}{$\vdots$}

\cell07{$\vdots$}\cell08{$\vdots$}\cell0{9.5}{$\vdots$}
\cell17{$a$}\cell18{$b$}\cell1{9.5}{$f$}
\cell27{$d_n$}\cell28{$c_n$}\cell2{9.5}{$f_n$}
\cell37{$\vdots$}\cell38{$\vdots$}\cell3{9.5}{$\vdots$}
\cell47{$d_1$}\cell48{$c_1$}\cell4{9.5}{$f_1$}
\cell57{$e$}\cell58{$e$}\cell5{9.5}{$\vdots$}

\node at (0.5,-5.5){$i$};
\node at (3,-5.5){$j$};
\node at (6.5,-5.5){$i$};
\node at (9,-5.5){$j$};
\end{tikzpicture}
\end{center}

By construction, the triple $(d_1,c_1,e)$ is counterclockwise in $\sigma$ if and only if $(c_1,d_1,e)$ is clockwise in $\mathcal{T}_i^{(r)}(\sigma)$, where $e=\sigma((i,r-1))$. All other triples in columns $i,i+1$ are counterclockwise in $\sigma$ if and only if the corresponding triples in $\mathcal{T}_i^{(r)}(\sigma)$ are as well.

Swapping entries in columns $i,i+1$ will not affect any triples with entries in columns to the left of $i$. It remains to check that $\mathcal{T}_i^{(r)}$ preserves the total number of counterclockwise triples with an entry in some column $j>i$ and entries in columns $i$ or $i+1$, shown in the figure above. 

Define $f_u=\sigma((j,r+u-1))$ for $u=1,\ldots,n$ and $f=\sigma((j,r+n))$. Both $\sigma$ and $\mathcal{T}_i^{(r)}(\sigma)$ have the same triples $(f_{u},c_{u},c_{u-1})$ and $(f_{u},d_{u},d_{u-1})$ for $u=2,\ldots,n$, along with $(f_1,c_1,e)$ and $(f_1,d_1,e)$. The only triples that change after applying the operator are the triples $(f,a,c_n)$ and $(f,b,d_n)$ in $\sigma$, where $a=\sigma(i,r+n)$ and $b=\sigma(i+1,r+n)$.  These triples become $(f,a,d_n)$ and $(f,b,c_n)$ in $\mathcal{T}_i^{(r)}(\sigma)$. We will now check that the total number of counterclockwise triples is preserved in each of these pairs. To simplify notation, we write $c=c_n$ and $d=d_n$.

Since we know that the adjustment process terminated at row $r+n-1$, we have that $(b,a,c)$ is a counterclockwise triple in $\sigma$ if and only if $(b,a,d)$ is in $\mathcal{T}_i^{(r)}(\sigma)$. 
Without loss of generality, assume $c>d$. First suppose both triples are counterclockwise.  We represent the order on the entries $a,b,c,d$ by the circle 
\begin{tikzpicture}
\circleS cb{a}d
\end{tikzpicture}, where the entries (read in counter-clockwise order starting from the smallest entry) appear in order from smallest to largest.

The relative position of $f$ gives 4 possible cyclic orders, which we consider in the table below. In this table, we check that indeed the total number of counterclockwise triples in the two pairs is preserved. 

\begin{center}
\begin{tabular}{|c||c|c||c|c|}
\hline
 &  \multicolumn{2}{c||}{$\sigma$} & \multicolumn{2}{c|}{$\mathcal{T}_i^{(r)}(\sigma)$}\\
& \tableau{a&&f\\c}&\tableau{b&&f\\d}&\tableau{a&&f\\ d}&\tableau{b&&f\\c}\\
\hline\hline
\begin{tikzpicture}
\circleT cba{\bf{f}}d
\end{tikzpicture}
& clock&clock&clock&clock\\
\hline
\begin{tikzpicture}
\circleT cb{\bf{f}}ad
\end{tikzpicture}
& counter&clock&counter&clock\\
\hline\hline
\begin{tikzpicture}
\circleT {\bf{f}}badc
\end{tikzpicture}
&counter&counter&counter&counter\\
\hline
\begin{tikzpicture}
\circleT cbad{\bf{f}}
\end{tikzpicture}
&clock&counter&counter&clock\\
\hline
\end{tabular}
\end{center}

The case where both triples are clockwise, implying the cyclic order \begin{tikzpicture}
\circleS ca{b}d
\end{tikzpicture}, is symmetric. By construction, no other counterclockwise triples are introduced or removed, so in all the possible cases, $\inv$ changes by exactly one based on the orientation of the triple $T$, thus completing the proof of the lemma.
\end{proof}

\begin{remark}\label{rem:triples}
Let $\sigma$ be a filling, and let $T$ be a triple in $\sigma$ consisting of three cells $(j,r), (i,r), (i,r-1)$, where $i<j$ and columns $i$ and $j$ have different lengths. Then a consequence of the proof of \cref{lem:coinv} is that for any operator $\mathcal{T}_u^{(r)}$, either
\begin{itemize}
\item $T$ is a counterclockwise triple in both $\sigma$ and $\mathcal{T}_u^{(r)}(\sigma)$, or
\item $T$ is a clockwise triple in both $\sigma$ and $\mathcal{T}_u^{(r)}(\sigma)$.
\end{itemize}
\end{remark}

\subsection{Sequences of operators and positive distinguished subexpressions}

So far we have defined operators $\mathcal{T}_i^{(r)}$ that act on two adjacent columns $i$ and $i+1$ at row $r$ of a filling. We will also need to consider sequences of these operators.  

Recall that the symmetric group $S_n$ is generated by the simple transpositions $\{s_1,\dots,s_{n-1}\}$, subject to the relations $s_i^2 = e$ and $s_i s_{i+1} s_i = s_{i+1} s_i s_{i+1}$, where $s_i\in S_n$ is the simple transposition which swaps $i$ and $i+1$, and $e$ is the identity. Given any $w\in S_n$, we say that $w=s_{i_t} \dots s_{i_2} s_{i_1}$ is reduced if there are no factorizations of $w$ into fewer than $t$ simple transpositions.  We often refer to a fixed reduced expression $s_{i_t} \dots s_{i_2} s_{i_1}$ for the permutation $w$ as $\mathbf{w}$.  We call $t$ the \emph{length} of $w$ and denote it $\ell(w) = t$.

\begin{definition}
Fix $n$ and suppose that $\mathbf{w} = 
s_{i_t} \cdots s_{i_1}$ is a reduced expression for a permutation $w\in S_n$.
Then we let $\mathcal{T}^{(r)}_{\mathbf{w}}$ denote the composition of operators
\[
\mathcal{T}_{i_t}^{(r)} \dots  \mathcal{T}_{i_1}^{(r)}.
	\]
 \end{definition}

Note that while each operator $\mathcal{T}_i^{(r)}$ is an involution, the operators do not satisfy braid relations; i.e. in general,  $\mathcal{T}_i^{(r)}
 \mathcal{T}_{i+1}^{(r)}
 \mathcal{T}_i^{(r)}(\sigma)
  \neq \mathcal{T}_{i+1}^{(r)}
 \mathcal{T}_{i}^{(r)}
 \mathcal{T}_{i+1}^{(r)}(\sigma).$  
So the operator $\mathcal{T}^{(r)}_{\mathbf{w}}$ depends on the reduced expression $\mathbf{w}$, not just the permutation $w$.

Since we will want to associate one operator to each permutation, we need to choose a canonical
reduced expression for each permutation.  For this,
we use the notion of the \emph{positive distinguished subexpression} (or PDS)
of a reduced expression, as defined in \cite{MR}.

\begin{defn} \label{def:posdistsubexpression} Let $v$ and $w$ be
permutations in $S_n$ such that $v \leq w$ in the (strong) Bruhat order. Fix $\mathbf{w}=s_{i_t} \cdots s_{i_1}$ a reduced expression for $w$. The \emph{positive distinguished subexpression} (PDS) for $v$ in $\mathbf{w}$ is a reduced expression $\mathbf{v}=v_t \dots v_1$ where $v_j \in \{s_{i_j}, e\}$, which is defined as follows.  First we initialize $v_{(0)}=v$, and then we inductively set 
\[ v_{(j)}=\left\{
\begin{array}{ll}
	v_{(j-1)}s_{i_j} & \text{ if } v_{(j-1)}s_{i_j}<v_{(j-1)} \\
v_{(j-1)} & \text{ otherwise}.
\end{array}
\right.
\]
Correspondingly we have 
$v_j=s_{i_j}$ if $v_{(j-1)}s_{i_j}<v_{(j-1)}$ and $v_j=e$ otherwise.  
\end{defn}
Note that informally, the PDS for $v$ is the rightmost reduced subexpression for $v$ in $\mathbf{w}$, where we choose its simple transpositions greedily from right to left.

\begin{example}
Consider $v=(2, 5, 1, 4, 3)$ (written in one-line notation), $w=(5, 4, 3, 2, 1)$, and the following reduced expression $\mathbf{w}$ for $w$, where the PDS for $v$ is in bold and the steps used to determine this PDS are listed in Figure~\ref{table:PDS}.
\[\mathbf{w}=s_1 s_2 s_1 \mathbf{s_3} s_2 \mathbf{s_1} \mathbf{s_4} \mathbf{s_3} \mathbf{s_2} s_1.
\]

\begin{figure}
\begin{tabular}{|c|c|c|c|}
\hline
j & $v_{(j-1)} s_{i_j}$ & $v_{(j)}$ & $v_j$ \\
\hline
\hline
0 &  N/A & $(2,5,1,4,3)$ & N/A \\
\hline
1 & $(5,2,1,4,3)$ & $(2,5,1,4,3)$ & $e$ \\
\hline
2 & $(2,1,5,4,3)$ & $(2,1,5,4,3)$ & $s_2$ \\
\hline
3 & $(2,1,4,5,3)$ & $(2,1,4,5,3)$ & $s_3$ \\
\hline
4 & $(2,1,4,3,5)$ & $(2,1,4,3,5)$ & $s_4$ \\
\hline
5 & $(1,2,4,3,5)$ & $(1,2,4,3,5)$ & $s_1$ \\
\hline
6 & $(1,4,2,3,5)$ & $(1,2,4,3,5)$ & $e$ \\
\hline
7 & $(1,2,3,4,5)$ & $(1,2,3,4,5)$ & $s_3$ \\
\hline
8 & $(2,1,3,4,5)$ & $(1,2,3,4,5)$ & $e$ \\
\hline
9 & $(1,3,2,4,5)$ & $(1,2,3,4,5)$ & $e$ \\
\hline
10 & $(2,1,3,4,5)$ & $(1,2,3,4,5)$ & $e$ \\
\hline
\end{tabular}
\caption{Determining the PDS for $v=(2,5,1,4,3)$ in $\mathbf{w}=s_{i_{10}}\cdots s_{i_2}s_{i_1}=s_1 s_2 s_1 s_3 s_2 s_1 s_4 s_3 s_2 s_1$.}{\label{table:PDS}}
\end{figure}

	Therefore the PDS for $v$ is $\mathbf{v} = e e e s_3 e s_1 s_4 s_3 s_2 e = s_3 s_1 s_4 s_3 s_2$. 
\end{example}

\begin{definition}
Fix a positive integer $n$ and let $\mathbf{w_0}$ be the canonical reduced
expression
$(s_1)(s_2 s_1)\ldots(s_{n-1} \dots s_2s_1)$
For each permutation $w\in S_n$, we let 
$\mathbf{w}= s_{i_k} \dots s_{i_1}$ be the PDS for $w$ in $\mathbf{w_0}$.  
Let $\PDS(n):= \{ \mathbf{w} \ \vert \ w\in S_n\}$.
We also define for each permutation $w\in S_n$ and each choice of $r$ the operator 
	\[\mathcal{T}_w^{(r)} = \mathcal{T}_{i_k}^{(r)} \dots \mathcal{T}_{i_1}^{(r)}.\]
\end{definition}

\begin{example}
When $n=3$, we have that 
$\PDS(n) = \{e, s_1, s_2, s_1 s_2, s_2 s_1, s_1 s_2 s_1\}$.
Note however that $s_2 s_1 s_2 \notin \PDS(3)$.
\end{example}

\begin{remark}\label{rem:truncation}
Note that by construction, if $s_{j_t} \dots s_{j_2}  s_{j_1}$ lies in $\PDS(n)$, 
then so does $s_{j_h} \dots s_{j_2} s_{j_1}$ for each $ t \geq h \geq 1$.  
In other words, $\PDS(n)$ is closed under truncation: if we truncate
any PDS in $\PDS(n)$
by removing simple reflections at the left, we obtain again a PDS in $\PDS(n)$.
\end{remark}

\begin{remark}\label{rem:spanning}
Suppose  we construct a graph whose vertices are labeled by the permutations in $S_n$, where we add an edge from $u$ to $v$ if the PDS for $v$ is obtained by adding one simple reflection to the left of the PDS for $u$.  It follows from \cref{rem:truncation} that  this graph is a tree; moreover it is a spanning tree for the $1$-skeleton of the permutohedron.  
\end{remark}

We now consider how the $\inv$ statistic changes when we perform a sequence of operators on a tableau.

\begin{lemma}\label{lem:tworow}
Let $\sigma$ denote the sorted two-row tableau

\begin{center}
 \begin{tikzpicture}[scale=.5]
\cell11{$c$} \cell12{$c$} \cell13{$\dots$}\cell14{$c$}
\cell01{$b_1$} \cell02{$b_2$} \cell03{$\dots$}\cell04{$b_n$}
\end{tikzpicture}
\end{center}
and let $\sigma'$ denote a new tableau
\begin{center}
\begin{tikzpicture}[scale=.5]
\cell11{$c$} \cell12{$c$} \cell13{$\dots$}\cell14{$c$}
\cell01{$w_1$} \cell02{$w_2$} \cell03{$\dots$}\cell04{$w_n$}
\end{tikzpicture}
\end{center}
obtained from $\sigma$ by permuting the entries $b_1,\dots, b_n$. We define the \emph{length} $L(\sigma')$ to be the length $\ell(\tilde{w})$ of a shortest permutation $\tilde{w}$ such that $(b_{\tilde{w}(1)}, \dots, b_{\tilde{w}(n)}) = (w_1,\dots, w_n)$. Then $L(\sigma') = \inv(\sigma')$.
\end{lemma}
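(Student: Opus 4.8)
The plan is to set up a bijective/statistical correspondence between the two-row tableau $\sigma'$ and the permutation data, and to track the $\inv$ statistic under a step-by-step sorting procedure. First I would observe that since the top row is constant (all entries equal to $c$), the only triples that contribute to $\inv(\sigma')$ are the degenerate inversion triples coming from the basement: for columns $i<j$, the pair of bottom-row cells $(i,1),(j,1)$ forms a degenerate inversion triple exactly when $\sigma'((i,1)) > \sigma'((j,1))$. Wait — I must be careful, because there are also non-degenerate triples $(j,2),(i,2),(i,1)$ with entries $c, c, w_i$; by the tie-breaking convention (equal entries: the one read first top-to-bottom, left-to-right is smaller), the two $c$'s satisfy $\sigma'((i,2)) < \sigma'((j,2))$, so the triple $c,c,w_i$ is an inversion triple iff $w_i < c$ or $c \leq c < w_i$ is false in the right cyclic pattern — I would check that these contributions are independent of the permutation of the $b$'s (they depend only on the multiset $\{b_1,\dots,b_n\}$ and which positions are filled), hence do not affect the comparison between $\sigma'$ and the sorted $\sigma$. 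After discarding the permutation-independent part, $\inv(\sigma')$ equals (a constant plus) the number of pairs $i<j$ with $w_i > w_j$, i.e. the number of inversions of the word $(w_1,\dots,w_n)$ relative to the sorted word $(b_1,\dots,b_n)$.

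The second ingredient is the standard fact from the combinatorics of the symmetric group that the minimal length $\ell(\tilde w)$ of a permutation $\tilde w$ with $(b_{\tilde w(1)},\dots,b_{\tilde w(n)}) = (w_1,\dots,w_n)$ equals the number of \emph{inversions} of the rearrangement, where an inversion is a pair of positions $i<j$ whose entries are strictly out of sorted order (ties among equal $b$-values are resolved so as not to count, which is exactly what "shortest $\tilde w$" achieves). Concretely: among all $\tilde w$ realizing the rearrangement, the shortest one is the one that keeps equal entries in their original relative order, and its length is $\#\{(i,j): i<j,\ w_i>w_j\}$. I would cite this as a well-known property of the left weak/Bruhat order on cosets $S_n / S_\mu$ where $\mu$ is the composition recording multiplicities of the values $b_1,\dots,b_n$; alternatively prove it directly by induction, bringing the largest misplaced entry into position with adjacent transpositions and checking each such transposition removes exactly one inversion.

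Finally I would combine the two: since $\sigma$ is itself a sorted two-row tableau (its bottom row $b_1,\dots,b_n$ is already in $\lhd$-increasing order, which for constant top row means weakly increasing), its contribution from the degenerate triples is zero, so the "constant" in step one is exactly $\inv(\sigma)$-minus-the-permutation-independent-part evaluated at the identity, and one gets $\inv(\sigma') = \#\{(i,j): i<j,\ w_i > w_j\} = L(\sigma')$ on the nose. The main obstacle I anticipate is the bookkeeping in step one — carefully confirming that every triple in $\sigma'$ involving a cell of the top (constant) row contributes to $\inv$ in a way that depends only on the multiset of bottom entries and not on their arrangement, so that subtracting it off is legitimate; once that is pinned down, the rest is the routine identification of the inversion count with Coxeter length.
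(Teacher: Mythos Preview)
You have the orientation of the two-row tableau backwards: in the paper's picture the $b_i$'s (hence the $w_i$'s) occupy the \emph{top} row, with the constant row of $c$'s directly below. Consequently the triples that matter are the non-degenerate ones $(w_j,w_i,c)$ for $i<j$, and such a triple is counterclockwise precisely when $w_j<w_i\le c$, or $c<w_j<w_i$, or $w_i\le c<w_j$. This is \emph{not} the ordinary condition $w_i>w_j$: for instance with $c=4$, $w_i=5$, $w_j=3$ one has $w_i>w_j$ yet the triple $(3,5,4)$ is clockwise. Correspondingly, ``sorted'' for the $b$'s is not weakly increasing but the cyclic order $b_h\le\cdots\le b_n\le c<b_1\le\cdots\le b_{h-1}$ (this is exactly what the paper records in its proof), so $L(\sigma')$ counts \emph{cyclic} inversions relative to $c$, not ordinary ones. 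Your proposed decomposition ``permutation-independent constant plus $\#\{i<j:w_i>w_j\}$'' therefore fails; the gap between cyclic and ordinary inversion counts depends on the arrangement of the $w$'s, not just on their multiset.

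Your overall strategy---compute both $\inv(\sigma')$ and $L(\sigma')$ directly as pair-counts and match them---can be salvaged once you replace ordinary inversions by cyclic ones: one checks that $(w_j,w_i,c)$ is counterclockwise exactly when $w_j$ precedes $w_i$ in the cyclic order beginning just above $c$, which is precisely the inversion condition for the shortest $\tilde w$. That corrected argument is a genuinely different route from the paper's. The paper instead invokes \cref{lem:coinv} (each $\mathcal{T}_i$-swap changes $\inv$ by $\pm1$) to get $\inv(\sigma')\le L(\sigma')$, and then runs an induction on $\inv(\sigma')$ for the reverse inequality, observing that whenever $\inv(\sigma')>0$ the cyclic-order condition is violated at some adjacent pair, so a single swap lowers $\inv$ by one. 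The paper's approach trades the case analysis of the triple conditions for reliance on the earlier operator lemma; your (corrected) direct count is more self-contained but requires pinning down the cyclic-inversion characterization carefully.
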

\begin{proof}
By \cref{lem:coinv}, each time we swap two consecutive entries in the bottom row
of such a two-row tableau, we increase the number of counterclockwise triples by at most $1$.
So $\inv(\sigma') \leq \ell(\tilde{w}) = L(\sigma')$.

We prove the opposite inequality by induction. By the definition of a sorted tableau, there exists some $0 \leq h \leq n$ such that 
	\begin{equation}\label{temp}
		b_{h} \leq b_{h+1} \leq \dots \leq b_n \leq c < b_1 \leq b_2 \leq \dots \leq b_{h-1}.
	\end{equation}
If $w$ is such that $\tilde{w}=\mbox{id}$, then $\inv(\sigma) = \inv(\sigma') = 0 = \ell(\tilde{w}).$ This gives the base case.  For the inductive step, suppose that $\inv(\sigma') = k+1 \geq 1$. Then $\sigma'$ is not a sorted tableau; or equivalently, \eqref{temp} is violated.  But in that case there exists a position $i$ such that $\inv(\mathcal{T}_i^{(1)}(\sigma')) = k$. By the induction hypothesis, this implies that $L(\mathcal{T}_i^{(1)}(\sigma'))\leq k$, and hence $L(\sigma') \leq k+1$.
\end{proof}

\subsection{The family $\mathcal{F}(\sigma)$ generated by a tableau}

In this section, we associate a set of tableaux $\mathcal{F}(\sigma)$ to each 
sorted tableau $\sigma$.  This set of tableaux is obtained from $\sigma$ by applying 
sequences of operators to $\sigma$.  See \cref{ExpandedExample} for a collection of sorted tableaux and their families.

We first define the \emph{block decomposition} of a sorted tableau.

\begin{definition}\label{def:blocks}
Let $\sigma$ be a sorted tableau. A \emph{block} in row $r$ 
of $\sigma$ is a maximal-by-inclusion contiguous sequence of elements 
	$(\sigma(i,r), \sigma(i+1,r), \dots, \sigma(j,r))$ (with $i<j$) such that 
$\sigma(i,s) = \dots= \sigma(j,s)$ for every row $s<r$ below $r$ in the diagram.
\end{definition}

Since every row of $\sigma$ can be partitioned into blocks, we get a \emph{block decomposition} of $\sigma$. See \cref{fig:block} for an example of a block decomposition of a sorted tableau.

\begin{remark}
If $(\sigma(i,r), \dots, \sigma(j,r))$ is a block in row $r$, then $(\sigma(i,r+1),\dots, \sigma(j,r+1))$ is a concatenation of blocks in row $r+1$.
\end{remark}

\begin{figure}

  \centering

\begin{tikzpicture}[scale=.6]

\cell013\cell021\cell033\cell041\cell051 \cell066
\cell111 \cell123 \cell133 \cell145 \cell155 \cell161
\cell213 \cell222 \cell232 \cell242\cell252\cell262
\cell314 \cell326 \cell336 \cell346 \cell355 \cell365
\cell413 \cell423 \cell433\cell443 \cell459 \cell469

\draw (0,-4)--(0,1)--(6,1);
\draw[blue] (0.2,-3.2)--(0.2,-3.8)--(5.8,-3.8)--(5.8,-3.2)--(.2,-3.2);
\draw[blue] (0.2,-2.2)--(0.2,-2.8)--(3.8,-2.8)--(3.8,-2.2)--(0.2,-2.2);
\draw[blue] (4.2,-2.2)--(4.2,-2.8)--(5.8,-2.8)--(5.8,-2.2)--(4.2,-2.2);
\draw[blue] (1.2,-1.2)--(1.2,-1.8)--(3.8,-1.8)--(3.8,-1.2)--(1.2,-1.2);
\draw[blue] (4.2,-1.2)--(4.2,-1.8)--(5.8,-1.8)--(5.8,-1.2)--(4.2,-1.2);
\draw[blue] (0.2,-1.2)--(0.2,-1.8)--(0.8,-1.8)--(0.8,-1.2)--(0.2,-1.2);
\draw[blue] (1.2,-0.2)--(1.2,-0.8)--(3.8,-0.8)--(3.8,-0.2)--(1.2,-0.2);
\draw[blue] (4.2,-0.2)--(4.2,-0.8)--(5.8,-0.8)--(5.8,-0.2)--(4.2,-0.2);
\draw[blue] (0.2,-0.2)--(0.2,-0.8)--(0.8,-0.8)--(0.8,-0.2)--(0.2,-0.2);
\draw[blue] (5.2,.2)--(5.2,.8)--(5.8,.8)--(5.8,.2)--(5.2,.2);
\draw[blue] (3.2,.2)--(3.2,.8)--(3.8,.8)--(3.8,.2)--(3.2,.2);
\draw[blue] (4.2,.2)--(4.2,.8)--(4.8,.8)--(4.8,.2)--(4.2,.2);
\draw[blue] (1.2,.2)--(1.2,.8)--(2.8,.8)--(2.8,.2)--(1.2,.2);
\draw[blue] (0.2,.2)--(0.2,.8)--(0.8,.8)--(0.8,.2)--(0.2,.2);

\node at (-1,.5) {\rm row 5};
 \node at (-1,-.5) {\rm row 4};
\node at (-1,-1.5) {\rm row 3};
\node at (-1,-2.5) {\rm row 2};
\node at (-1,-3.5) {\rm row 1};
\end{tikzpicture}

\caption{Block decomposition of a filling} 
	\label{fig:block}
\end{figure}

\begin{definition}\label{def:Sym}
Suppose $B = (b_1,\dots,b_n)$ is a word, possibly with 
repeated entries.  
	Let \begin{equation*}
		\Sym(B) = \{w = (w_1,\dots,w_n) \ \vert \ (w_1,\dots,w_n)
	\text{ is a rearrangement of the letters of }B\}.
	\end{equation*}
	For each $w\in \Sym(B)$, let $\tilde{w}$ be the 
	shortest permutation such that 
	\[(b_{\tilde{w}^{-1}(1)},\dots, b_{\tilde{w}^{-1}(n)}) = (w_1,\dots, w_n).\]  
	Let \begin{equation*}
		\widetilde{\Sym}(B) = \{\tilde{w} \ \vert \ w\in \Sym(B)\}.
	\end{equation*}
\end{definition}

Note that $\Sym(B)$ is a set (rather than a multiset), since although there might be two different permutations of the letters of $B$ resulting in the same rearrangement, $\Sym(B)$ is the set of all \emph{distinct} such rearrangements.

\begin{example} 
In the table below, we show the element of $\widetilde{\Sym}$ and the PDS corresponding to each $w\in\Sym(B)$ for $B=(1,1,2,2)$.
\begin{center}
{\renewcommand{\arraystretch}{1.1}
\begin{tabular}{c|c|c}
$\Sym(B)$&$\widetilde{\Sym}(B)$&PDS\\
\hline
$1122$&$1234$&$e$\\
$1212$&$1324$&$s_2$\\
$1221$&$1342$&$s_2s_3$\\
$2112$&$3124$&$s_2s_1$\\
$2121$&$3142$&$s_2s_3s_1$\\
$2211$&$3412$&$s_2s_1s_3s_2$
\end{tabular}
}
\end{center}
\end{example}

\begin{remark}\label{rem:multinomial}
Let $B=(b_1,\ldots ,b_n)$, $b_1 \leq \dots \leq b_n$, and $w\in \Sym(B)$.  Then $\ell(\tilde{w})$ equals the number of inversions of $w$. Moreover, if $B$ contains $r$ distinct entries (without loss of generality suppose the entries are $1$ through $r$), i.e. $B = 1^{m_1} 2^{m_2} \dots r^{m_r}$, where $m_1+m_2+\dots + m_r = n$, then by \cite[Theorem 5.1]{FoataHan} we have
\begin{equation}
		{n \choose m_1,m_2,\dots,m_r}_t = \sum_{\tilde{w}\in \widetilde{\Sym}(B)} t^{\ell(\tilde{w})}.
	\end{equation}

\end{remark}

In what follows, 
we will often single out the $r$th row of a tableau, and permute
the elements of its blocks.  
\begin{definition}
	Suppose that  $\sigma$ has blocks $B^{(1)},\dots, B^{(\ell)}$ in 
	row $r.$
Let $\Sym(B^{(1)}) \times \dots \times \Sym(B^{(\ell)})$ denote the set of all
permutations of the $r$th row of $\sigma$, which only permute entries within a 
block. We also let 
	 \begin{equation*}
		 \widetilde{\Sym}(B^{(1)}) \times \dots \widetilde{\Sym}(B^{(\ell)}) = \{\tilde{w} \ \vert \ w\in \Sym(B^{(1)}) \times \dots \times \Sym(B^{(\ell)})\}.
	\end{equation*}
\end{definition}

In our definition of the family $\mathcal{F}(\sigma)$ below, we will first discuss the case where the diagram of $\sigma$ is a rectangle.  In the general case, when a sorted tableau $\sigma$ is a concatenation from left to right of rectangular sorted tableaux $\sigma_1, \dots, \sigma_k$ (all of different heights), the elements of $\mathcal{F}(\sigma)$ are precisely the concatenations of the elements of $\mathcal{F}(\sigma_1), \dots, \mathcal{F}(\sigma_k)$.

\begin{definition}[The family of a rectangular sorted tableau]\label{def:family}
Suppose that $\sigma$ is a rectangular sorted tableau with $m$ rows and $n$ columns (where as usual, rows are labeled from $1$ to $m$ from bottom to top, and columns are labeled $1$ to $n$ from left to right). Set $\mathcal{F}^{(m)}(\sigma) = \{\sigma\}$. For $r$ from $m-1$ to $0$ we define a set $\mathcal{F}^{(r)}(\sigma)$ of tableaux. These tableaux will have the property that their entries in rows $1, 2,\dots, r-1$ agree precisely with the corresponding entries of $\sigma$.  

For each $\tau\in \mathcal{F}^{(r)}(\sigma)$, consider the block decomposition of the $r$th row of $\tau$.  Denote the entries and blocks of row $r$ by $b_1,\dots,b_n$ and $B^{(1)},\dots, B^{(\ell)}$, respectively. We set 
	\[
	\mathcal{F}^{(r-1)}(\sigma) = 
	\{\mathcal{T}_{\tilde{w}}^{(r)}(\tau) \ \vert \ 
	\tau \in \mathcal{F}^{(r)}(\sigma),\  
	w \in \widetilde{\Sym}(B^{(1)}) \times \dots \times \widetilde{\Sym}(B^{(\ell)})\}.
	\]
Finally we define \[\mathcal{F}(\sigma) = \mathcal{F}^{(0)}(\sigma).\]
\end{definition}

\begin{remark}
Note that the tableaux in $\mathcal{F}(\sigma)$ are naturally the labels of a tree rooted at $\sigma$, which we call the \emph{family tree} of $\sigma$.  Here we draw an edge from tableau $\tau$ to another tableau $\tau'$ if we have obtained $\tau'$ from $\tau$ by applying a single operator $\mathcal{T}_i^{(r)}$ for some $i$ and $r$; see \cref{ExpandedExample} for an example.  As we use the positive distinguished subexpression for every permutation (see \cref{rem:spanning}), we will get every tableau only once.  
\end{remark}

\begin{figure}
\centering
\begin{tikzpicture}[node distance=2cm]
\node[fill=gray!40] (A) at (1.5, 5) {$\tableau{1&1&1 \\ 1&2&3}$};
\node (B) at (0, 3) {$\tableau{1&1&1 \\ 2&1&3}$};
\node (C) at (3, 3) {$\tableau{1&1&1 \\ 1&3&2}$};
\node at (0, 1) {$\tableau{1&1&1 \\ 2&3&1}$};
\node at (3, 1) {$\tableau{1&1&1 \\ 3&1&2}$};
\node at (0, -1) {$\tableau{1&1&1 \\ 3&2&1}$};

\node at (2.5,5) {$t^0$};
\node at (1,3) {$t^1$};
\node at (4,3) {$t^1$};
\node at (1,1) {$t^2$};
\node at (4,1) {$t^2$};
\node at (1,-1) {$t^3$};

 \draw[->] (A)--(B)  node[midway,left] {$\mathcal{T}_1^{(1)}$};
 \draw[->] (A)--(C) node[midway,right] {$\mathcal{T}_2^{(1)}$};
 \draw[->] (0,2.5)--(0,1.5)  node[midway,left] {$\mathcal{T}_2^{(1)}$};
 \draw[->] (3,2.5)--(3,1.5) node[midway,right] {$\mathcal{T}_1^{(1)}$};
   \draw[->] (0,0.5)--(0,-0.5)  node[midway,left] {$\mathcal{T}_1^{(1)}$};

 \node[fill=gray!40] at (6, 5) {$\tableau{1&1&2 \\ 1&1&3}$};
\node at (6, 3) {$\tableau{1&2&1 \\ 1&3&1}$};
\node at (6, 1) {$\tableau{2&1&1 \\ 3&1&1}$};

\node at (7,5) {$t^2$};
\node at (7,3) {$t^3$};
\node at (7,1) {$t^4$};

 \draw[->] (6,4.5)--(6,3.5)  node[midway,left] {$\mathcal{T}_2^{(2)}$};
 \draw[->] (6,2.5)--(6,1.5) node[midway,left] {$\mathcal{T}_1^{(2)}$};

\node[fill=gray!40](G) at (10.5, 5) {$\tableau{3&1&1 \\ 1&1&2}$};
\node(H) at (9, 3) {$\tableau{1&3&1 \\ 1&1&2}$};
\node(I) at (12, 3) {$\tableau{3&1&1 \\ 1&2&1}$};

\node at (12, 1) {$\tableau{3&1&1 \\ 2&1&1}$};
\node at (9, 1) {$\tableau{1&3&1 \\ 1&2&1}$};
\node at (9, -1) {$\tableau{1&3&1 \\ 2&1&1}$};

\node at (11.6,5) {$qt^0$};
\node at (10,3) {$qt^1$};
\node at (10,1) {$qt^2$};
\node at (10,-1) {$qt^3$};
\node at (13,3) {$qt^1$};
\node at (13,1) {$qt^2$};

 \draw[->] (G)--(H) node[midway,left] {$\mathcal{T}_1^{(2)}$};
 \draw[->] (G)--(I) node[midway,right] {$\mathcal{T}_2^{(1)}$};

 \draw[->] (9,2.5)--(9,1.5) node[midway,left] {$\mathcal{T}_2^{(1)}$};
 \draw[->] (9,0.5)--(9,-0.5) node[midway,left] {$\mathcal{T}_1^{(1)}$};
  \draw[->] (12,2.5)--(12,1.5) node[midway,right] {$\mathcal{T}_1^{(1)}$};

\end{tikzpicture}
\vspace{0.2in}

\begin{tikzpicture}[node distance=2cm]
\node[fill=gray!40] (A) at (1.5, 5) {$\tableau{2&3&1 \\ 1&1&1}$};
\node (B) at (0, 3) {$\tableau{3&2&1 \\ 1&1&1}$};
\node (C) at (3, 3) {$\tableau{2&1&3 \\ 1&1&1}$};
\node at (0, 1) {$\tableau{3&1&2 \\ 1&1&1}$};
\node at (3, 1) {$\tableau{1&2&3 \\ 1&1&1}$};
\node at (0, -1) {$\tableau{1&3&2 \\ 1&1&1}$};

\node at (2.7,5) {$q^2t^0$};
\node at (1.1,3) {$q^2t^1$};
\node at (4.1,3) {$q^2t^1$};
\node at (1.1,1) {$q^2t^2$};
\node at (4.1,1) {$q^2t^2$};
\node at (1.1,-1) {$q^2t^3$};

 \draw[->] (A)--(B)  node[midway,left] {$\mathcal{T}_1^{(2)}$};
 \draw[->] (A)--(C) node[midway,right] {$\mathcal{T}_2^{(2)}$};
 \draw[->] (0,2.5)--(0,1.5)  node[midway,left] {$\mathcal{T}_2^{(2)}$};
 \draw[->] (3,2.5)--(3,1.5) node[midway,right] {$\mathcal{T}_1^{(2)}$};
  \draw[->] (0,0.5)--(0,-0.5)  node[midway,left] {$\mathcal{T}_1^{(2)}$};

  \node[fill=gray!40] (D) at (7.5, 5) {$\tableau{2&1&1 \\ 1&1&3}$};
\node (E) at (6, 3) {$\tableau{1&2&1 \\ 1&1&3}$};
\node (F) at (9, 3) {$\tableau{2&1&1 \\ 1&3&1}$};
\node at (6, 1) {$\tableau{1&1&2 \\ 1&3&1}$};
\node at (6, -1) {$\tableau{1&1&2 \\ 3&1&1}$};
\node at (9, 1) {$\tableau{1&2&1 \\ 3&1&1}$};

\node at (8.6,5) {$qt^0$};
\node at (7,3) {$qt^1$};
\node at (10,3) {$qt^1$};
\node at (7,1) {$qt^2$};
\node at (10,1) {$qt^2$};
\node at (7,-1) {$qt^3$};

 \draw[->] (D)--(E)  node[midway,left] {$\mathcal{T}_1^{(2)}$};
 \draw[->] (D)--(F) node[midway,right] {$\mathcal{T}_2^{(1)}$};
 \draw[->] (6,2.5)--(6,1.5)  node[midway,left] {$\mathcal{T}_2^{(1)}$};
 \draw[->] (6,0.5)--(6,-0.5)  node[midway,left] {$\mathcal{T}_1^{(1)}$};
 \draw[->] (9,2.5)--(9,1.5) node[midway,right] {$\mathcal{T}_1^{(1)}$};

 \node[fill=gray!40] at (12, 5) {$\tableau{1&1&3 \\ 1&1&2}$};
\node at (12, 3) {$\tableau{1&1&3 \\ 1&2&1}$};
\node at (12, 1) {$\tableau{1&1&3 \\ 2&1&1}$};

\node at (13.1,5) {$qt^2$};
\node at (13,3) {$qt^3$};
\node at (13,1) {$qt^4$};

 \draw[->] (12,4.5)--(12,3.5)  node[midway,left] {$\mathcal{T}_2^{(1)}$};
 \draw[->] (12,2.5)--(12,1.5) node[midway,left] {$\mathcal{T}_1^{(1)}$};
\end{tikzpicture}

\caption{Sorted tableaux $\sigma$ of  $\dg((2,2,2))$ such that $x^\sigma=x_1^4 x_2 x_3$ with their families and their statistics $t^{\inv} q^{\maj}$}
\label{ExpandedExample}
\end{figure}
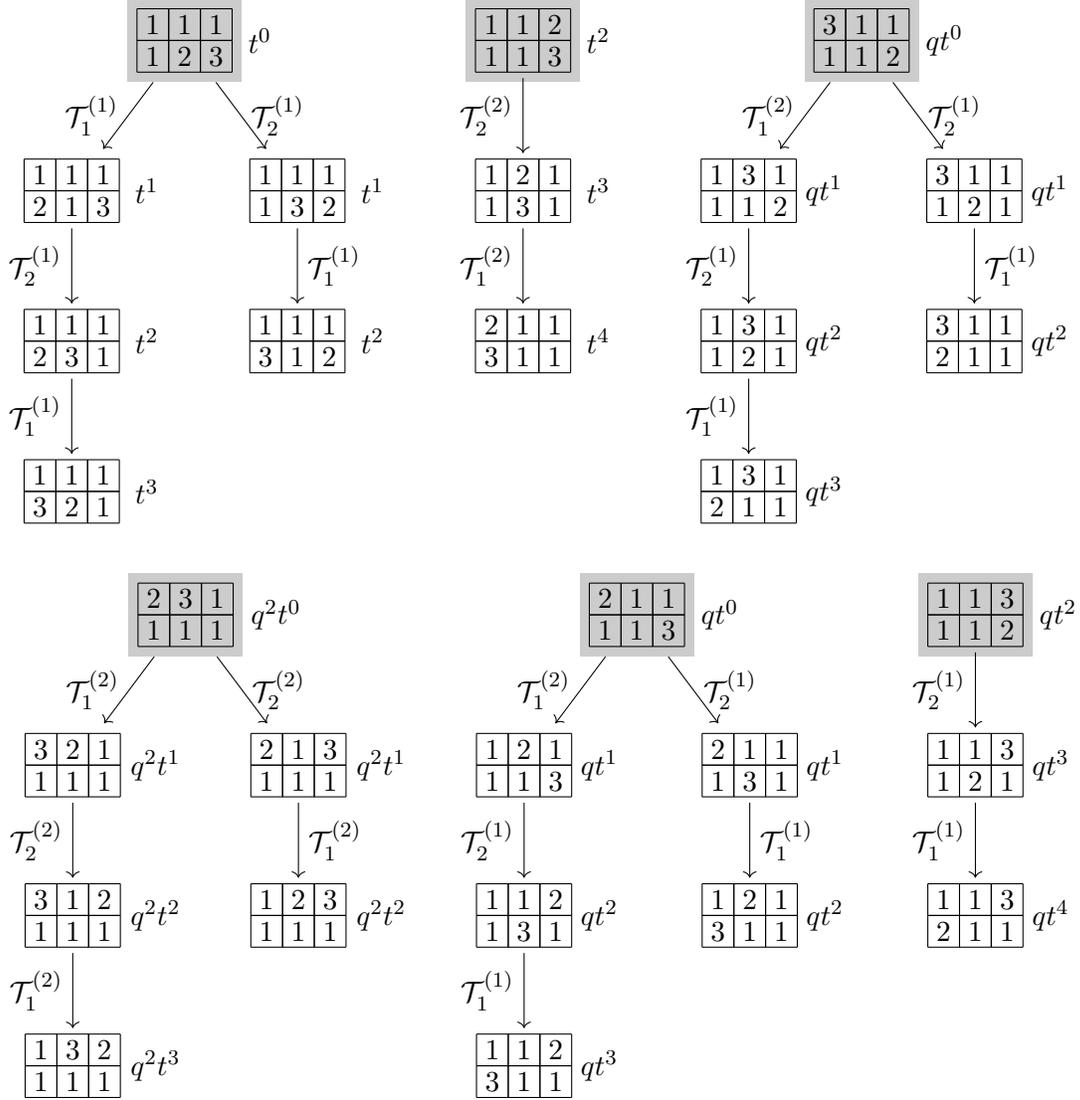

\begin{example}
Consider the sorted tableau 
	$\sigma = \tableau{2 & 1 & 1\\ 1 & 1 & 3}$
in \cref{ExpandedExample}. It has the following block decomposition.

\begin{center}
\begin{tikzpicture}[scale=.6]
\cell01{$2$} \cell02{$1$} \cell03{$1$}
\cell11{$1$} \cell12{$1$} \cell13{$3$}
\draw[blue] (0.2,-.2)--(0.2,-.8)--(2.8,-.8)--(2.8,-.2)--(.2,-.2);
\draw[blue] (2.2,.2)--(2.2,.8)--(2.8,.8)--(2.8,.2)--(2.2,.2);
\draw[blue] (0.2,.2)--(0.2,.8)--(1.8,.8)--(1.8,.2)--(0.2,.2);
\node at (-1,.5) {\rm row 2};
 \node at (-1,-.5) {\rm row 1};
\end{tikzpicture}
\end{center}

Row $2$ has two blocks, $B^{(1)} = (2,1)$ and $B^{(2)} = (1)$. $\Sym(B^{(1)}) \times \Sym(B^{(2)}) = \{(2,1,1), (1,2,1)\}$. For $w = (2,1,1)$ we have $\tilde{w} = e$;  for $w = (1,2,1)$ we have $\tilde{w} = s_1$. We set $\mathcal{F}^{(2)}(\sigma)=\{\sigma\}$ and obtain the elements of $\mathcal{F}^{(1)}(\sigma)$ by applying $\mathcal{T}_{e}^{(2)}$ and $\mathcal{T}_{s_1}^{(2)}$ to $\sigma$ to get $\mathcal{F}^{(1)}(\sigma) = \{\sigma,\ $\tableau{1 & 2 & 1\\ 1 & 1 & 3}$\ \}$.

	Row $1$ has one block, $B^{(1)} = (1,1,3)$. We have $\Sym(B^{(1)}) = \{(3,1,1), (1,3,1), (1,1,3)\}.$ For $w = (1,1,3)$ we have $\tilde{w} = e$;  for $w = (1,3,1)$ we have $\tilde{w} = s_2$; and for $w = (3,1,1)$ we have $\tilde{w} = s_1 s_2$.  So we obtain the elements of $\mathcal{F}^{(0)}(\sigma)$ by applying $\mathcal{T}_{e}^{(1)}$, $\mathcal{T}_{s_2}^{(1)}$ and $\mathcal{T}_{s_1 s_2}^{(1)}$ to each of the elements of $\mathcal{F}^{(1)}(\sigma)$. This is shown in \cref{ExpandedExample}.
\end{example}

\begin{definition}[The family of an arbitrary sorted tableau]\label{def2:family}
Suppose that $\sigma$ is a sorted tableau which is a concatenation from left to right of rectangular sorted tableaux $\sigma_1, \dots, \sigma_k$ (all of different heights).  In this case we define $\mathcal{F}(\sigma)$ to be precisely the concatenations (from left to right) of the elements of $\mathcal{F}(\sigma_1), \dots, \mathcal{F}(\sigma_k)$. 
\end{definition}

\begin{lemma}\label{lem:maj2}
	Let $\sigma$ be a sorted tableau, and choose any $\tau \in \mathcal{F}(\sigma)$.  Then $\maj(\tau) = \maj(\sigma)$.
\end{lemma}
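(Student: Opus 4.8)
The plan is to reduce the claim for an arbitrary sorted tableau to the rectangular case, using \cref{def2:family}: if $\sigma$ is a concatenation of rectangular sorted tableaux $\sigma_1,\dots,\sigma_k$ of distinct heights, then $\maj(\sigma) = \sum_i \maj(\sigma_i)$ (since descents and legs are computed column by column, and the $\lhd$-ordering never mixes columns of different heights), and any $\tau \in \mathcal{F}(\sigma)$ is a concatenation of $\tau_i \in \mathcal{F}(\sigma_i)$; so it suffices to prove $\maj(\tau_i) = \maj(\sigma_i)$ for each $i$. Hence from now on I would assume $\sigma$ is rectangular.

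For the rectangular case I would induct on the construction of $\mathcal{F}(\sigma)$ in \cref{def:family}, working from the top row down. The inductive claim is: for every $\tau \in \mathcal{F}^{(r)}(\sigma)$, $\maj(\tau) = \maj(\sigma)$. The base case $r = m$ is trivial since $\mathcal{F}^{(m)}(\sigma) = \{\sigma\}$. For the inductive step, take $\tau \in \mathcal{F}^{(r)}(\sigma)$ with $\maj(\tau) = \maj(\sigma)$; any element of $\mathcal{F}^{(r-1)}(\sigma)$ built from $\tau$ has the form $\mathcal{T}_{\tilde w}^{(r)}(\tau)$ where $\tilde w \in \widetilde{\Sym}(B^{(1)}) \times \cdots \times \widetilde{\Sym}(B^{(\ell)})$ for the block decomposition of row $r$ of $\tau$. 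Writing $\tilde w$ as a product of simple transpositions via its PDS, $\mathcal{T}_{\tilde w}^{(r)}$ is a composition $\mathcal{T}_{i_k}^{(r)} \cdots \mathcal{T}_{i_1}^{(r)}$, and each factor $\mathcal{T}_{i_j}^{(r)}$ is applied to a filling in which columns $i_j$ and $i_j+1$ are identical in all rows below $r$ — this is exactly the hypothesis of \cref{lem:maj}, which guarantees each factor preserves $\maj$. Composing, $\maj(\mathcal{T}_{\tilde w}^{(r)}(\tau)) = \maj(\tau) = \maj(\sigma)$.

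The one point that needs care — and I expect it to be the main obstacle — is verifying that the "identical below row $r$" hypothesis of \cref{lem:maj} genuinely holds for every intermediate filling encountered while applying the operators of $\mathcal{T}_{\tilde w}^{(r)}$ in sequence. I would argue this from the block structure: since each $\tilde w$ permutes entries only within blocks of row $r$, and within a single block all columns agree in every row strictly below $r$ by \cref{def:blocks}, the operators $\mathcal{T}_i^{(r)}$ that arise only ever swap within such a block, so columns $i$ and $i+1$ are indeed identical below row $r$ at the moment each operator fires. (The operators may alter entries in rows above $r$, but never in rows below $r$, so the hypothesis is stable under composition.) Once this is established, \cref{lem:maj} applies verbatim at each step.

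Finally, I would tie the pieces together: the induction gives $\maj(\tau) = \maj(\sigma)$ for all $\tau \in \mathcal{F}^{(0)}(\sigma) = \mathcal{F}(\sigma)$ in the rectangular case, and the concatenation reduction at the start of the proof promotes this to arbitrary sorted $\sigma$, completing the proof of \cref{lem:maj2}.
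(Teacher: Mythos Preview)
Your proposal is correct and follows essentially the same approach as the paper: both argue that every element of $\mathcal{F}(\sigma)$ is obtained from $\sigma$ by a sequence of operators $\mathcal{T}_i^{(r)}$ each satisfying the hypothesis of \cref{lem:maj}, so $\maj$ is preserved. The paper's proof is a single sentence (``this follows from \cref{lem:maj} and the definition of the family $\mathcal{F}(\sigma)$''), whereas you have carefully unpacked the verification that the ``identical below row $r$'' hypothesis holds at each intermediate step---a worthwhile clarification, but not a different argument.
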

\begin{proof}
	This follows from \cref{lem:maj} and the definition of the family $\mathcal{F}(\sigma)$.
\end{proof}

\subsection{The proof of \cref{thm:MacPerm2}}

In this section we will prove \cref{thm:MacPerm2}. Once we have verified \cref{prop:partition} and \cref{lem:genfun} below, the theorem will follow from \cref{thm:HHL}.

 \begin{proposition}\label{prop:partition}
Fix a partition $\lambda$.  Then
	 \[\bigsqcup_{\sigma\in \ST(\lambda)} 
	 \mathcal{F}(\sigma) = \bigcup_{\tau: \dg(\lambda) \to \Z^+} \tau.\]
	 \end{proposition}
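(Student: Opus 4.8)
The plan is to show that the family construction partitions the set of all fillings of $\dg(\lambda)$ into blocks, one for each sorted tableau. Since $\mathcal{F}(\sigma)$ for a concatenation $\sigma = \sigma_1 \cdots \sigma_k$ is by definition the set of concatenations of the $\mathcal{F}(\sigma_i)$, and every filling of $\dg(\lambda)$ restricts to fillings of the maximal rectangular blocks of $\dg(\lambda)$, it suffices to prove the statement when $\dg(\lambda)$ is a single $m \times n$ rectangle. So for the rest I would assume $\lambda = (n^m)$ is a rectangle.

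\textbf{First (well-definedness / disjointness): distinct $\sigma$ give disjoint families.} I would prove that $\sigma$ can be recovered from any $\tau \in \mathcal{F}(\sigma)$. The idea is to run the construction in reverse, row by row from the top. Given $\tau$, look at row $m$: by \cref{lem:tworow} applied repeatedly, within each block of row $m$ of $\tau$, the entries are some rearrangement of a sorted (i.e.\ $\lhd$-increasing) sequence, and there is a unique way to sort each block; moreover, by \cref{lem:involution} and the structure of the PDS (\cref{rem:spanning}, \cref{rem:truncation}), applying the inverse sequence of operators $\mathcal{T}^{(m)}_{\tilde w}$ is well-defined and reversible. After undoing row $m$ we get an element of $\mathcal{F}^{(m-1)}(\sigma)$... wait, more precisely we must argue that the block decomposition of row $m$ of $\tau$ is exactly the block decomposition of row $m$ of $\sigma$ — this holds because blocks in row $m$ depend only on rows below $m$, which $\tau$ and $\sigma$ share — and then that unsorting row $m$ and pushing down via the operator recovers the row-$m$ data of $\sigma$ and lands us in $\mathcal{F}^{(m-1)}(\sigma)$ for a \emph{unique} choice. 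Iterating down to row $1$ reconstructs $\sigma$. A subtle point I will need to be careful about is that applying $\mathcal{T}^{(r)}_{\tilde w}$ can alter entries in rows $> r$ (the "propagation" in the definition of $\mathcal{T}_i^{(r)}$), so "the entries in rows $1,\dots,r-1$ agree with $\sigma$" (as asserted in \cref{def:family}) must be verified, and the reconstruction must account for the fact that row $r$ of $\tau$ already reflects all the propagations from rows below. The cleanest way is to process rows top-down and observe that when we get to row $r$, the configuration of rows $\geq r$ is exactly what the partial family construction $\mathcal{F}^{(r)}$ produced, so by induction the block structure and the uniqueness of the sorting/unsorting recovers the data.

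\textbf{Second (covering): every filling lies in some family.} Given an arbitrary filling $\tau$ of the $m \times n$ rectangle, I would produce the sorted tableau $\sigma$ with $\tau \in \mathcal{F}(\sigma)$ by the same top-down sorting process: at row $r$ (starting from $r=m$), take the block decomposition of row $r$ (which, inductively, matches that of the object being built), and within each block apply the unique sorting operator to make the block $\lhd$-increasing, propagating downward. By \cref{lem:tworow} (which identifies $\inv$ of a permuted two-row block with the Coxeter length $\ell(\tilde w)$) together with \cref{lem:involution}, each such block-sorting is an inverse of one of the operators $\mathcal{T}^{(r)}_{\tilde w}$ used in \cref{def:family}; after processing all rows we reach a tableau in which every row, read left to right within blocks, is $\lhd$-increasing — i.e.\ a sorted tableau $\sigma$ — and unwinding the construction shows $\tau \in \mathcal{F}(\sigma)$. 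Combining with the first part gives the disjoint union, which is the claim.

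\textbf{Main obstacle.} I expect the hardest part to be the bookkeeping around the downward propagation of the operators $\mathcal{T}_i^{(r)}$: one must show that the "unsort row $r$, then propagate" step is genuinely invertible and that the block decomposition is stable under this process, so that the top-down reconstruction is both well-defined and canonical. Concretely, the key lemma to nail down is that for a fixed bottom-agreement pattern, the map $\tau \mapsto (\text{sorted version of row } r, \tilde w)$ together with the operator action sets up a bijection between $\mathcal{F}^{(r)}(\sigma)$ and (sorted row-$r$ data) $\times$ ($\prod_\ell \widetilde{\Sym}(B^{(\ell)})$)-indexed copies, which relies on using the PDS canonically (\cref{rem:spanning}) so that no tableau is produced twice. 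Once this bijectivity at each row is established, \cref{prop:partition} follows by induction on the number of rows.
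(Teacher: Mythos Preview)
There is a genuine gap: your reconstruction runs in the wrong direction.  Recall from \cref{def:family} that the family is built by applying operators in the order $\mathcal{T}^{(m)}$, then $\mathcal{T}^{(m-1)}$, \dots, then $\mathcal{T}^{(1)}$, and each $\mathcal{T}_i^{(r)}$ propagates \emph{upward} (it may alter rows $r,r+1,\dots$ but never touches rows below $r$; your phrase ``propagating downward'' is a symptom of the confusion).  Consequently an element $\tau\in\mathcal{F}(\sigma)=\mathcal{F}^{(0)}(\sigma)$ need not agree with $\sigma$ in any row, and in particular the block decomposition of row $m$ of $\tau$ --- which depends on rows $1,\dots,m-1$ of $\tau$ --- is in general \emph{different} from that of $\sigma$, contrary to your claim.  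For a concrete failure, take the sorted tableau $\sigma$ with row $2=(2,3,1)$ and row $1=(1,1,2)$; then the filling $\tau$ with row $2=(3,1,2)$ and row $1=(1,2,1)$ lies in $\mathcal{F}(\sigma)$.  In $\tau$ the row $1=(1,2,1)$ forces the row-$2$ blocks to be three singletons, so your ``sort row $m$ first'' step does nothing; after then sorting row $1$ you obtain the filling with row $2=(3,2,1)$ and row $1=(1,1,2)$, which is not a sorted tableau and is not $\sigma$.

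The correct inversion --- and the paper's argument --- is \emph{bottom-up}.  Since $\mathcal{T}^{(1)}$ is the \emph{last} layer applied in building $\mathcal{F}(\sigma)$, one first uses the involutions $\mathcal{T}_i^{(1)}$ to put row $1$ of $\tau$ into weakly increasing order; this lands in $\mathcal{F}^{(1)}(\sigma)$ and, crucially, fixes row $1$ so that the block structure on row $2$ is now the correct one (that of $\sigma$).  One then sorts row $2$ within those blocks to reach $\mathcal{F}^{(2)}(\sigma)$, and so on up to row $m$.  This deterministic bottom-up procedure produces a unique sorted tableau $\sigma$ with $\tau\in\mathcal{F}(\sigma)$, which simultaneously gives covering and disjointness.
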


\begin{proposition}\label{lem:genfun}
	Let $\sigma$ be a sorted tableau.  Then 
	\[\sum_{\tau \in \mathcal{F}(\sigma)} q^{\maj(\tau)} t^{\inv(\tau)} = 
	q^{\maj(\sigma)} t^{\inv(\sigma)} \perm_t(\sigma).\]
\end{proposition}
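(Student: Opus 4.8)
The plan is to reduce to the rectangular case and then compute the generating function by peeling off rows from the top, using the fact that the family $\mathcal{F}(\sigma)$ is built inductively row by row. First, if $\sigma$ is a concatenation of rectangular sorted tableaux $\sigma_1,\dots,\sigma_k$ of distinct heights, then by \cref{def2:family} the elements of $\mathcal{F}(\sigma)$ are exactly the concatenations of elements of $\mathcal{F}(\sigma_i)$; since $\maj$ and $\inv$ are each additive over the columns belonging to distinct rectangular blocks (triples never straddle two blocks of different heights, by the arm/leg conventions), both statistics split as sums over the $\sigma_i$, and $\perm_t(\sigma)=\prod_i\perm_t(\sigma_i)$ by definition. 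So it suffices to prove the identity when $\sigma$ is an $m\times n$ rectangular sorted tableau.

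For the rectangular case, I would argue by downward induction on $r$, proving for each $r$ from $m$ down to $0$ that
\[
\sum_{\tau\in\mathcal{F}^{(r)}(\sigma)} q^{\maj(\tau)} t^{\inv(\tau)}
= q^{\maj(\sigma)} t^{\inv(\sigma)} \prod_{h=r+1}^{m} P_h(\sigma),
\]
where $P_h(\sigma)$ is the $t$-multinomial recording how the entries of row $h$ of $\sigma$ are distributed among its blocks; by \cref{rem:multinomial}, $\prod_{h=1}^m P_h(\sigma)=\perm_t(\sigma)$ (the product of $t$-multinomials over all blocks in all rows equals $\perm_t(\sigma)$, since $\perm_t$ is itself a product of $t$-multinomials over the distinct column fillings, and these factor through the blocks). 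The base case $r=m$ is $\mathcal{F}^{(m)}(\sigma)=\{\sigma\}$, which is immediate. For the inductive step, fix $\tau\in\mathcal{F}^{(r)}(\sigma)$ with blocks $B^{(1)},\dots,B^{(\ell)}$ in row $r$; by \cref{lem:maj2} (or \cref{lem:maj}) every $\mathcal{T}^{(r)}_{\tilde w}(\tau)$ has the same $\maj$ as $\tau$, hence as $\sigma$. For $\inv$, I claim
\[
\sum_{w\in\widetilde{\Sym}(B^{(1)})\times\cdots\times\widetilde{\Sym}(B^{(\ell)})}
t^{\inv(\mathcal{T}^{(r)}_{\tilde w}(\tau))}
= t^{\inv(\tau)}\prod_{u=1}^{\ell}{|B^{(u)}| \choose \text{mult}(B^{(u)})}_t,
\]
which combined with \cref{rem:multinomial} gives exactly the factor $P_r(\sigma)$ and completes the induction, yielding the proposition at $r=0$.

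The heart of the argument — and the step I expect to be the main obstacle — is establishing that displayed $\inv$ identity, i.e.\ that applying the PDS-operator $\mathcal{T}^{(r)}_{\tilde w}$ to $\tau$ adds exactly $\ell(\tilde w)$ to $\inv$, independently within each block. The per-transposition change of $\inv$ by $\pm 1$ is given by \cref{lem:coinv}, and \cref{lem:tworow} already handles the two-row, single-block case; the work is to lift this to the situation where $\tau$ has many rows above row $r$. The key points are: (i) because we use positive distinguished subexpressions, the greedy choice in \cref{def:posdistsubexpression} guarantees that each successive simple transposition in $\mathbf{\tilde w}$ is a genuine descent, so by \cref{lem:coinv} each elementary operator $\mathcal{T}^{(r)}_{i}$ in the sequence acts on a pair of columns currently differing at row $r$ and increases $\inv$ by $+1$ (never $-1$) — this is where the closure-under-truncation property (\cref{rem:truncation}) and the sorted-tableau hypothesis (\eqref{temp} in \cref{lem:tworow}, applied blockwise) are used to rule out the $-1$ case; (ii) by \cref{lem:maj} the propagation into higher rows never changes $\maj$, and by \cref{rem:triples} triples involving columns of different heights are unaffected, so the count is clean; and (iii) distinct $w$ give distinct tableaux and each tableau arises once, because the PDS assignment $w\mapsto\mathbf{\tilde w}$ is injective and the family tree has no repeated vertices (\cref{rem:spanning}). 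Assembling (i)–(iii), the sum over $w$ of $t^{\inv}$ telescopes into $t^{\inv(\tau)}\sum_{\tilde w} t^{\ell(\tilde w)}$, and \cref{rem:multinomial} identifies the latter sum with the product of $t$-multinomials, as required.
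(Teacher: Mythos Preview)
Your overall strategy matches the paper's: factor off $q^{\maj}$ via \cref{lem:maj2}, reduce to rectangles, and then run an induction that peels off one row at a time using \cref{cor:operator} (your displayed $\inv$ identity is exactly \cref{cor:operator} plus \cref{rem:multinomial}), with \cref{lem:perm} supplying the identity $\prod_h P_h(\sigma)=\perm_t(\sigma)$. The paper's induction is on the number of rows of $\sigma$ (using $\sigma|_1^{m-1}$), while yours is on the index $r$ in the family construction; these are equivalent reorganizations of the same computation.

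There is, however, one genuine misstatement in your reduction to rectangles. You assert that ``triples never straddle two blocks of different heights, by the arm/leg conventions'' and hence that $\inv$ is additive over the rectangular pieces. This is false: for a partition diagram the definition of triple in \cref{sec:def} imposes no arm condition, and triples with $u<v$ in columns of different heights do occur (see e.g.\ the degenerate triple between columns $3$ and $6$ in \cref{fig:perm}). Consequently $\inv(\sigma)\ne\inv(\sigma_1)+\cdots+\inv(\sigma_k)$ in general. The correct statement, which the paper proves as \cref{lem:shape}, is that the number $s$ of such cross-height inversion triples is \emph{invariant under all the operators} (this is \cref{rem:triples}), so that $\inv(\tau)=\inv(\tau_1)+\cdots+\inv(\tau_k)+s$ uniformly over $\tau\in\mathcal{F}(\sigma)$; the constant $t^s$ then factors out of both sides. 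Once you replace your additivity claim with this invariance argument, your reduction goes through and the rest of your proof is correct and essentially identical to the paper's.
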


To prove \cref{lem:genfun}, note that by \cref{lem:maj2}, it 
suffices to prove:

\begin{proposition}\label{lem:genfun2}
Let $\sigma$ be a sorted tableau.  Then 
	\begin{equation}
	\label{eq:crucial} 
		\sum_{\tau \in \mathcal{F}(\sigma)}  t^{\inv(\tau)} = 
	t^{\inv(\sigma)} \perm_t(\sigma).
	\end{equation}
\end{proposition}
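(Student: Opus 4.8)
The plan is to induct on the number of rows of $\sigma$, using the block decomposition and the recursive structure of $\mathcal{F}(\sigma)$ from \cref{def:family}. First I would reduce to the rectangular case: since an arbitrary sorted tableau is a left-to-right concatenation of rectangular sorted tableaux $\sigma_1,\dots,\sigma_k$ of distinct heights, and $\mathcal{F}(\sigma)$ is by \cref{def2:family} exactly the set of concatenations of elements of the $\mathcal{F}(\sigma_i)$, while $\inv$ is additive over the pieces (triples never span columns of different heights in a way that mixes the blocks — more precisely $\inv(\tau)=\sum_i \inv(\tau_i)$ for $\tau=\tau_1\cdots\tau_k$, which needs a short justification using \cref{rem:triples}) and $\perm_t(\sigma)=\prod_i\perm_t(\sigma_i)$ by definition, the identity \eqref{eq:crucial} for $\sigma$ follows from the identities for each $\sigma_i$.

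So assume $\sigma$ is rectangular with $m$ rows and $n$ columns. I would induct on $m$, peeling off the top row. Let $\mathcal{F}^{(m-1)}(\sigma)$ be the set obtained in the first step of \cref{def:family}: namely $\sigma$ has blocks $B^{(1)},\dots,B^{(\ell)}$ in row $m$ (but since $\sigma$ is rectangular and row $m$ is the top, each block is determined by equalities in rows $1,\dots,m-1$), and $\mathcal{F}^{(m-1)}(\sigma)=\{\mathcal{T}^{(m)}_{\tilde w}(\sigma): w\in\widetilde{\Sym}(B^{(1)})\times\cdots\times\widetilde{\Sym}(B^{(\ell)})\}$. Each $\tau'\in\mathcal{F}^{(m-1)}(\sigma)$ is again a sorted tableau on rows $1,\dots,m-1$ after deleting row $m$; actually I should be careful — the point is that the operators $\mathcal{T}^{(m)}_{\tilde w}$ only rearrange row $m$ (and possibly rows above, but there are none), so the sub-tableau on rows $1,\dots,m-1$ is unchanged and $\mathcal{F}(\sigma)=\bigsqcup_{\tau'\in\mathcal{F}^{(m-1)}(\sigma)}\mathcal{F}(\tau')$ where $\mathcal{F}(\tau')$ is computed from row $m-1$ downward exactly as for a tableau of height $m-1$ with an extra inert top row. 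Applying the inductive hypothesis to the height-$(m-1)$ part of each $\tau'$, I get $\sum_{\tau\in\mathcal{F}(\tau')}t^{\inv(\tau)}=t^{\inv(\tau')}\perm_t(\tau'_{\le m-1})$ — but one must track how the top row contributes to $\inv$ and show $\perm_t(\tau'_{\le m-1})=\perm_t(\sigma)/(\text{top-row factor})$; this bookkeeping is the delicate part.

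A cleaner route, which I would actually use: argue row-by-row from the bottom, but carry the identity as a product over rows. Concretely, by \cref{lem:tworow} and \cref{rem:multinomial}, permuting the entries within a single block $B$ of a row via the operators $\mathcal{T}^{(r)}_{\tilde w}$, $w\in\widetilde{\Sym}(B)$, multiplies the $t$-generating function by $\sum_{\tilde w\in\widetilde{\Sym}(B)}t^{\ell(\tilde w)}=\binom{|B|}{m_1,\dots,m_j}_t$ where $m_1,\dots,m_j$ are the multiplicities of distinct values in $B$ — **provided** one checks that (i) $\mathcal{T}^{(r)}_{\tilde w}$ changes $\inv$ by exactly $\ell(\tilde w)$ on the relevant sub-configuration (this is the content of \cref{lem:coinv} iterated, together with the fact that $\inv(\mathcal{T}^{(r)}_{\mathbf w}\sigma)=\inv(\sigma)+\ell(w)$ precisely because we use the PDS, so no cancellation of inversions occurs — this is where \cref{rem:spanning}/the PDS choice is essential), and (ii) operators acting on different blocks of the same row, or on different rows, commute appropriately so the total effect is the product of the per-block multinomials. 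Summing over all blocks of all rows, $\prod_{r}\prod_{\text{blocks }B\text{ in row }r}\binom{|B|}{\dots}_t$, and observing (by the recursive "sorted within each block" structure and the way $\perm_t$ is defined via identical columns) that this product telescopes to $\perm_t(\sigma)$, gives \eqref{eq:crucial}. The main obstacle is step (i)–(ii): verifying that the $\inv$-increments from the successive operator applications genuinely add without cancellation — i.e. that $\inv(\mathcal{T}^{(r)}_w\tau)=\inv(\tau)+\ell(\tilde w)$ for the PDS representative, uniformly in the ambient tableau — and that interactions between a lower block-permutation and the (already permuted) rows above it do not disturb the count; this is exactly where the propagation mechanism of $\mathcal{T}^{(r)}_i$ into higher rows, controlled by \cref{lem:maj} and \cref{rem:triples}, has to be invoked carefully.
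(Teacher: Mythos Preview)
Your approach is essentially the same as the paper's: reduce to the rectangular case, then induct on the number of rows using the block structure, with the key input that $\inv(\mathcal{T}^{(r)}_{\tilde w}(\tau))=\inv(\tau)+\ell(\tilde w)$ (the paper's \cref{prop:additive}/\cref{cor:operator}) together with \cref{rem:multinomial} and the telescoping identity for $\perm_t$ (\cref{lem:perm}).

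Two corrections are worth making. First, in the reduction to rectangles you assert $\inv(\tau)=\sum_i\inv(\tau_i)$, but this is false: triples \emph{do} span columns of different heights (that is precisely how the arm is defined). The correct statement, which is what \cref{rem:triples} actually gives, is that the number $s$ of counterclockwise triples spanning two rectangular pieces is \emph{invariant} under all the operators, so $\inv(\tau)=\sum_i\inv(\tau_i)+s$ for every $\tau\in\mathcal{F}(\sigma)$; the factor $t^s$ then cancels against the same factor in $t^{\inv(\sigma)}$. Second, you attribute the identity $\inv(\mathcal{T}^{(r)}_{\tilde w}(\tau))=\inv(\tau)+\ell(\tilde w)$ to the PDS choice, but this is not where the PDS matters. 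By \cref{lem:coinv} each application of $\mathcal{T}^{(r)}_i$ changes $\inv$ by $\pm1$, and by \cref{lem:tworow} the net change after any reduced word for $\tilde w$ is exactly $\ell(\tilde w)$ (since $\inv$ of the resulting two-row configuration depends only on the word in row $r$, not on the path taken); hence every step is $+1$ regardless of the reduced expression. The PDS is needed only to pin down $\mathcal{F}(\sigma)$ as a set of tableaux (since the $\mathcal{T}_i$ do not satisfy braid relations), not for the $\inv$ bookkeeping.
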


Note that \cref{lem:genfun2} implies in particular that 
		\[|\mathcal{F}(\sigma)| = \perm_{t=1}(\sigma).\]

We prove \cref{prop:partition} by explaining how to go from an arbitrary 
filling $\tau$ of $\dg(\lambda)$ to a unique sorted tableau $\sigma$ such that 
$\tau \in \mathcal{F}(\sigma)$.  We prove \cref{lem:genfun2} by using the operators $\mathcal{T}$.

\subsubsection{The proof of \cref{prop:partition}}

The following simple lemma is the key step in proving \cref{prop:partition}.
\begin{lemma}\label{def:polyqueueorder}
Let $\tau$ be a filling of an $m \times n$ rectangle, such that the restriction $\tau \vert_1^{r-1}$ of $\tau$ to rows $1, 2, \dots, r-1$ (its bottom $r-1$ rows) is a sorted tableau. Let $w_1, \dots, w_n$ be the entries in row $r$. Then there is a unique rearrangement of $w_1,\dots, w_n$, say $b = (b_1,\dots, b_n)$, such that when we add $b$ as a new row at the top of $\tau \vert_1^{r-1}$, we get a sorted tableau. We refer to the replacement of $w_1,\dots, w_n$ by $b_1, \dots, b_n$ in $\tau$ as \emph{putting row $r$ into sorted order}.
\end{lemma}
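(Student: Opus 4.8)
The plan is to reduce the statement to the behavior of the column order $\lhd$ under changing a single row, organized by the \emph{block decomposition} of row $r$ (\cref{def:blocks}). Replacing the entries of row $r$ does not touch rows $1,\dots,r-1$, so the partition of the columns of row $r$ into blocks is the same for $\tau$ and for the new filling; it therefore suffices to understand, for each consecutive pair of columns, when the left one is $\lhd$ the right one in the enlarged $(r\times n)$-rectangle, and to see that the constraints decouple block-by-block.

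First I would dispose of a consecutive pair $A$ (left), $B$ (right) lying in \emph{different} blocks. By definition of the blocks, $A$ and $B$ already disagree at some row $s\le r-1$; taking $s$ lowest, unwinding \cref{def:orderhhl} shows that whether $A\lhd B$ depends only on the entries of $A$ and $B$ in rows $s-1$ and $s$ (with the basement convention if $s=1$) -- data already present in $\tau\vert_1^{r-1}$. Since $\tau\vert_1^{r-1}$ is a sorted tableau, $A\lhd B$ holds, and it holds no matter how row $r$ is filled. So all cross-block consecutive comparisons are automatic, and the conditions on the new row come only from consecutive columns inside a single block.

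Next I would analyze one block $C$: its columns share common entries $e_1,\dots,e_{r-1}$ in rows $1,\dots,r-1$; put $e:=e_{r-1}$, with $e:=\infty$ when $r=1$. For two consecutive columns of $C$ with top entries $x$ (left) and $y$ (right), their first disagreement (if any) is in row $r$, so whether the left column is $\lhd$ the right one is decided by the single triple with entries $y,x,e$ in the configuration of the definition of a triple. Reading off the three inversion-triple cases, one finds that the left column is $\lhd$ the right one exactly when $x\preceq_e y$, where $\preceq_e$ is the linear order on $\Z^{+}$ in which every integer $>e$ precedes every integer $\le e$, each of the two classes being ordered naturally. (Equivalently, this is the two-row sorted-tableau condition isolated in the proof of \cref{lem:tworow}, applied to rows $r-1$ and $r$ of the columns of $C$; entries in rows below $r-1$ are irrelevant to this comparison.) Since $\preceq_e$ is a total order, the multiset of row-$r$ entries carried by $C$ has a unique weakly $\preceq_e$-increasing arrangement.

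Combining the two steps, an arrangement of row $r$ yields a sorted $(r\times n)$-rectangle if and only if within each block the row-$r$ entries occur in weakly $\preceq_e$-increasing order; hence the arrangement $b$ obtained by leaving each original entry of row $r$ in its block and sorting every block according to $\preceq_e$ is a sorted tableau, and -- the blocks being fixed and each $\preceq_e$ a total order -- it is the unique such rearrangement. This $b$ is the one claimed. I expect the only fiddly points to be the case check in the block analysis (the four positions of $x,y$ relative to $e$, including the $r=1$ case where the bottom cell of the triple is a basement $\infty$) and confirming that re-ordering entries inside one block never disturbs a comparison across a block boundary -- which is exactly what the first step guarantees.
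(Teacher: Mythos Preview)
Your proof is correct and takes the same approach as the paper: use the block decomposition of row $r$, and within each block sort the entries by the cyclic order $\preceq_e$ (first the entries $>e$, then the entries $\le e$, each naturally increasing) determined by the common value $e$ in row $r-1$. The paper's argument is terser---it simply states this sorting rule and asserts the result is sorted---so your added verifications (that cross-block consecutive comparisons are automatic because they are already decided in rows $\le r-1$, and that within-block comparisons reduce exactly to $\preceq_e$) fill in details the paper leaves implicit.
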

\begin{proof}
Since the bottom $r-1$ rows of $\tau$ form a sorted tableau, we have a block decomposition of row $r$, see \cref{def:blocks}. Now we just take the entries $w_1,\dots,w_n$ from row $r$, and within each block $B$, we sort the entries according to the following procedure.  

Notice that all the entries in row $r-1$ directly below a fixed block $B$ are equal to the same number, say $z$. Now we sort the entries of $B$ with respect to $z$: from left to right, start with all entries larger than $z$ in weakly increasing order, followed by all entries smaller than or equal to $z$ in weakly increasing order. After we do this for all blocks $B$, if we let $b=(b_1,\dots,b_n)$ denote the new rearrangement of the entries of row $r$, then the concatenation of $b$ at the top of $\tau \vert_1^{r-1}$ gives a sorted tableau.
\end{proof}

\begin{example}
Suppose $\tau =\tableau{7&2&4&1&1&7\\1&3&3&3&4&4}$\ , with $m=2$ and $n=6$. Then row $1$ of $\tau$ is a sorted tableau, so we take $r=2$. Then $\tau \vert_1^{r-1} = \tableau{1&3&3&3&4 & 4}$\ , the block decomposition of row $r$ induced by row $r-1$ gives $B^{(1)} = (7)$, $B^{(2)} = (2, 4, 1)$, $B^{(3)} = (1,7)$, and finally $(w_1,\dots,w_6) = (7,2,4,1,1,7)$, and $(b_1,\dots,b_6) = (7,4,1,2,7,1)$. 
\end{example}

The following algorithm starts with an arbitrary filling $\tau$ of $\dg(\lambda)$, and produces the unique sorted tableau $\sigma$ such that $\tau \in \mathcal{F}(\sigma)$. This algorithm completes the proof of \cref{prop:partition}.

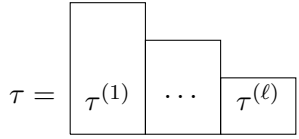
\begin{figure}
\centering
\begin{tikzpicture}[scale=0.5]
\node at (-1,.5) {$\tau =$};

\draw (0,-.5)--(0,3)--(2,3)--(2,-.5)--(0,-.5);
\node at (1,.5) {$\tau^{(1)}$};

\draw (2,-.5)--(2,2)--(4,2)--(4,-.5)--(2,-.5);
\node at (3,.5) {$\dots$};

\draw (4,-.5)--(4,1)--(6,1)--(6,-.5)--(4,-.5);
\node at (5,.5) {$\tau^{(\ell)}$};

\end{tikzpicture}
\caption{A tableau $\tau$ decomposed into maximal rectangular tableaux $\tau^{(1)},\ldots,\tau^{(\ell)}$.}\label{fig:tab_decomp}
\end{figure}

\begin{algorithm}\label{alg:reverse}
Suppose $\tau$ is an arbitrary filling of an $m\times n$ rectangle. We use the operators $\mathcal{T}_i^{(1)}$ apply \cref{def:polyqueueorder} with $r=1$ to rearrange the entries of row 1 of $\tau$ into sorted order (which in this case is just weakly increasing order). This gives us a block structure on row $2$, so we use the operators $\mathcal{T}_i^{(2)}$ to apply \cref{def:polyqueueorder} with $r=2$ to put the entries in row $2$ of the resulting tableau into sorted order.  This gives us a block structure on row $3$, and we continue, until we arrive at a unique sorted tableau which we denote by $\sigma = \sigma(\tau)$. By \cref{def:family}, it is clear that $\tau \in \mathcal{F}(\sigma)$.

	Now let $\tau:\dg(\lambda) \to \Z^+$ be an arbitrary filling of $\dg(\lambda)$. We write it as a concatenation, from left to right, of rectangular tableaux $\tau^{(1)},\dots, \tau^{(\ell)}$, each of a different height, see \cref{fig:tab_decomp}. We let $\sigma = \sigma(\tau)$ be the concatenation, from left to right, of the rectangular tableaux $\sigma(\tau^{(1)}),\dots, \sigma(\tau^{(\ell)})$. Since each of $\sigma(\tau^{(1)}),\dots, \sigma(\tau^{(\ell)})$ is a sorted tableau, so is their concatenation.  And by \cref{def2:family}, it is clear that $\tau \in \mathcal{F}(\sigma)$. 
\end{algorithm}

See \cref{fig:reverse} for an example of \cref{alg:reverse}.

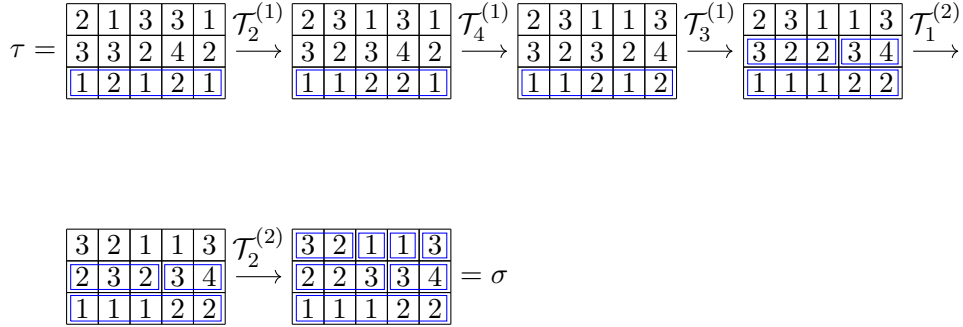
\begin{figure}
\begin{center}
\begin{tikzpicture}[node distance=2cm]
\node at (-1.5,4) {$\tau=$};
\node at (4.5,1) {$= \sigma$};

\node at (0,4) {$\tableau{ 2&1&3&3&1 \\ 3&3&2&4&2 \\  1&2&1&2&1 }$};
\node at (3,4){$\tableau{ 2&3&1&3&1 \\ 3&2&3&4&2 \\ 1&1&2&2&1 }$};
\node at (6,4){$\tableau{ 2&3&1&1&3 \\ 3&2&3&2&4 \\  1&1&2&1&2 }$};
\node at (9,4){$\tableau{ 2&3&1&1&3 \\ 3&2&2&3&4 \\ 1&1&1&2&2  }$};
\node at (0,1){$\tableau{ 3&2&1&1&3 \\ 2&3&2&3&4 \\ 1&1&1&2&2 }$};
\node at (3,1){$\tableau{ 3&2&1&1&3 \\ 2&2&3&3&4 \\ 1&1&1&2&2 }$};

\draw[->] (1.2,4)--(1.8,4)  node[midway,above] {$\mathcal{T}_2^{(1)}$};
\draw[->] (4.2,4)--(4.8,4)  node[midway,above] {$\mathcal{T}_4^{(1)}$};
\draw[->] (7.2,4)--(7.8,4)  node[midway, above] {$\mathcal{T}_3^{(1)}$};
\draw[->] (10.2,4)--(10.8,4)  node[midway,above] {$\mathcal{T}_1^{(2)}$};
\draw[->] (1.2,1)--(1.8,1)  node[midway,above] {$\mathcal{T}_2^{(2)}$};

\draw[blue] (-1,3.41)--(-1,3.75)--(1,3.75)--(1,3.41)--(-1,3.41);
\draw[blue] (2,3.41)--(2,3.75)--(4,3.75)--(4,3.41)--(2,3.41);
\draw[blue] (5,3.41)--(5,3.75)--(7,3.75)--(7,3.41)--(5,3.41);
\draw[blue] (8,3.41)--(8,3.75)--(10,3.75)--(10,3.41)--(8,3.41);
\draw[blue] (-1,.41)--(-1,.75)--(1,.75)--(1,.41)--(-1,.41);
\draw[blue] (2,.41)--(2,.75)--(4,.75)--(4,.41)--(2,.41);

\draw[blue] (8,3.83)--(8,4.16)--(9.17,4.16)--(9.17,3.83)--(8,3.83);
\draw[blue] (9.25,3.83)--(9.25,4.16)--(10,4.16)--(10,3.83)--(9.25,3.83);

\draw[blue] (-1,.83)--(-1,1.16)--(.17,1.16)--(.17,.83)--(-1,.83);
\draw[blue] (.25,.83)--(.25,1.16)--(1,1.16)--(1,.83)--(.25,.83);

\draw[blue] (2,.83)--(2,1.16)--(3.17,1.16)--(3.17,.83)--(2,.83);
\draw[blue] (3.25,.83)--(3.25,1.16)--(4,1.16)--(4,.83)--(3.25,.83);

\draw[blue] (2,1.25)--(2,1.58)--(2.75,1.58)--(2.75,1.25)--(2,1.25);
\draw[blue] (2.83,1.25)--(2.83,1.58)--(3.17,1.58)--(3.17,1.25)--(2.83,1.25);
\draw[blue] (3.25,1.25)--(3.25,1.58)--(3.58,1.58)--(3.58,1.25)--(3.25,1.25);
\draw[blue] (3.68,1.25)--(3.68,1.58)--(4,1.58)--(4,1.25)--(3.68,1.25);

\end{tikzpicture}
\end{center}
\caption{An example of the reverse algorithm is shown for the generalized tableau $\tau$ in the upper left corner to find the corresponding sorted tableau $\sigma$ in the bottom right corner. Note that we have indicated the block decomposition of a given row as soon as all rows below it are in sorted order.}\label{fig:reverse}
\end{figure}

\subsubsection{The proof of \cref{lem:genfun2}}

We start by reducing \cref{lem:genfun2} to the case where $\sigma$ is a rectangular sorted tableau.

\begin{lemma}\label{lem:shape}
If \cref{lem:genfun2} is true for sorted rectangular tableaux, then it is true for all sorted tableaux.
\end{lemma}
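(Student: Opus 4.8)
The plan is to reduce the statement to the assumed rectangular case by using the product structure of both sides. Write $\sigma = \sigma_1 \mid \cdots \mid \sigma_k$ as the left-to-right concatenation of its maximal rectangular pieces (of strictly decreasing heights). By \cref{def2:family} the map $(\tau^{(1)},\dots,\tau^{(k)}) \mapsto \tau^{(1)}\mid\cdots\mid\tau^{(k)}$ is a bijection from $\mathcal F(\sigma_1)\times\cdots\times\mathcal F(\sigma_k)$ onto $\mathcal F(\sigma)$ (injectivity is clear since each piece occupies a fixed block of columns), and by definition $\perm_t(\sigma)=\prod_{i=1}^k\perm_t(\sigma_i)$. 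So the whole lemma will follow once we show that
\[
\inv\bigl(\tau^{(1)}\mid\cdots\mid\tau^{(k)}\bigr)=\sum_{i=1}^k\inv\bigl(\tau^{(i)}\bigr)+c,
\qquad c:=\inv(\sigma)-\sum_{i=1}^k\inv(\sigma_i),
\]
for every choice of $\tau^{(i)}\in\mathcal F(\sigma_i)$, i.e.\ that the correction term $c$ is independent of the $\tau^{(i)}$. Indeed, granting this, the bijection together with the rectangular case of \cref{lem:genfun2} gives
\[
\sum_{\tau\in\mathcal F(\sigma)}t^{\inv(\tau)}
=t^{c}\prod_{i=1}^k\Bigl(\sum_{\tau^{(i)}\in\mathcal F(\sigma_i)}t^{\inv(\tau^{(i)})}\Bigr)
=t^{c}\prod_{i=1}^k t^{\inv(\sigma_i)}\perm_t(\sigma_i)
=t^{\inv(\sigma)}\perm_t(\sigma),
\]
using $c+\sum_i\inv(\sigma_i)=\inv(\sigma)$ and $\prod_i\perm_t(\sigma_i)=\perm_t(\sigma)$.

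To prove the displayed identity I would argue by induction on operators. Each $\tau^{(i)}$ is reached from $\sigma_i$ by a sequence of basic operators $\mathcal T_j^{(r)}$ acting inside piece $i$; performing piece $1$'s sequence, then piece $2$'s, and so on, exhibits $\tau=\tau^{(1)}\mid\cdots\mid\tau^{(k)}$ as the image of $\sigma$ under one long sequence of basic operators, each acting within a single piece. It therefore suffices to check that a single basic operator $\mathcal T_j^{(r)}$ acting within piece $p$ changes $\inv$ of the whole tableau by exactly the amount it changes $\inv$ of the restriction to the columns of piece $p$ (it leaves $\inv$ of the other restrictions alone, as it does not touch those columns), and then induct, the base case $\tau=\sigma$ giving $\inv(\sigma)-\sum_i\inv(\sigma_i)=c$ by definition. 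This is precisely the content of \cref{lem:coinv}: that lemma applies to an arbitrary filling and identifies the change in $\inv$ under $\mathcal T_j^{(r)}$ as $\pm1$ according to the orientation of the (possibly degenerate) triple on the cells $(j{+}1,r),(j,r),(j,r{-}1)$; these three cells lie in piece $p$ and carry identical entries in the whole tableau and in its restriction to piece $p$ (whose basement is again all $\infty$'s), so the sign is the same in both computations, and the restricted $\inv$-count is exactly the contribution to $\inv(\tau)$ from triples living entirely in piece $p$.

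I do not expect a genuine obstacle here; the point requiring care is the honest invocation of \cref{lem:coinv} on the \emph{non}-rectangular tableau $\tau$. Its hypotheses — columns $j$ and $j{+}1$ agreeing in the rows below $r$ and differing in row $r$ — hold for $\tau$ exactly because they hold for the standalone piece, on which $\mathcal T_j^{(r)}$ is well defined, and columns $j,j{+}1$ of $\tau$ are literally columns $j,j{+}1$ of that piece. The cascading second step of $\mathcal T_j^{(r)}$ may alter individual inversion triples straddling piece $p$ and a shorter piece to its right, but by \cref{rem:triples} such triples are only permuted among themselves, so the net change in $\inv$ really is the single $\pm1$ promised by \cref{lem:coinv}; in fact one need not track these at all, since \cref{lem:coinv} already records the net effect. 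With these points in place the induction closes and \cref{lem:shape} follows.
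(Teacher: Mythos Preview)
Your proposal is correct and follows essentially the same approach as the paper: decompose $\sigma$ into its rectangular pieces, use the bijection $\mathcal{F}(\sigma)\cong\prod_i\mathcal{F}(\sigma_i)$ from \cref{def2:family}, and show that the cross-piece contribution to $\inv$ is the same constant $c$ for every $\tau\in\mathcal{F}(\sigma)$. The only cosmetic difference is that the paper identifies $c$ directly as the number of inversion triples spanning two pieces and invokes \cref{rem:triples} once to conclude this count is invariant under the operators, whereas you establish invariance by inducting along the operator sequence and applying \cref{lem:coinv} twice (to the whole filling and to the piece) at each step---the underlying mechanism is identical.
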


\begin{proof}
Let $\sigma$ be a sorted tableau that is a concatenation of two rectangular sorted tableaux $\sigma_1$ and $\sigma_2$, which have different heights. Note that $\sigma$ may contain some counterclockwise triples $(j,r)$, $(i,r)$, $(i,r-1)$ for $i<j$, where columns $i$ and $j$ have different lengths.  Let $s$ be the number of such triples.  By \cref{rem:triples}, this set of counterclockwise triples will be invariant under the operators.

We have that $\mathcal{F}(\sigma)$ is the concatenation of elements of $\mathcal{F}(\sigma_1)$ and $\mathcal{F}(\sigma_2)$, and hence if $\tau \in \mathcal{F}(\sigma)$ has the form $\tau = \tau_1 \sqcup \tau_2$ where $\tau_i \in \mathcal{F}(\sigma_i)$, then $\inv(\tau) = \inv(\tau_1)+\inv(\tau_2)+s$. Therefore the left-hand side of \eqref{eq:crucial} becomes
\begin{align*}
	\sum_{\tau \in \mathcal{F}(\sigma)}  t^{\inv(\tau)} &= 
		\sum_{\tau_1 \in \mathcal{F}(\sigma_1), 
		\tau_2\in \mathcal{F}(\sigma_2)}  t^{\inv(\tau_1)+\inv(\tau_2)+s}\\
		&=  
	t^s	
	\sum_{\tau_1 \in \mathcal{F}(\sigma_1)} t^{\inv(\tau_1)}
	\sum_{\tau_2 \in \mathcal{F}(\sigma_2)} t^{\inv(\tau_2)}.
\end{align*}
Since \eqref{eq:crucial} is true for rectangles, this quantity is equal to 
\[
t^s 
	t^{\inv(\sigma_1)} \perm_t(\sigma_1)
	t^{\inv(\sigma_2)} \perm_t(\sigma_2), 
\]
which in turn is equal to $t^{\inv(\sigma)} \perm_t(\sigma)$, as desired.
\end{proof}

We can now focus on proving \cref{lem:genfun2} for rectangular sorted tableaux.  We will prove it by induction on the number of rows.  We start with the base case.

\begin{lemma}\label{lem:base}
\cref{lem:genfun2} is true for sorted tableaux that consist of a single row.
\end{lemma}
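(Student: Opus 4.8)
The plan is to unwind every definition in the one-row case, where the family construction collapses to a classical statement about rearrangements of a word and the $t$-multinomial coefficient. First I would pin down the shape of $\sigma$. If $\sigma$ consists of a single row of length $n$, then applying \cref{def:orderhhl} to columns of height $1$ (whose common basement entry is $\infty$) one checks that $A\lhd B$ holds precisely when the entry of $A$ is at most the entry of $B$; hence the row of $\sigma$ is a weakly increasing word $b=(b_1\le b_2\le\cdots\le b_n)$, say with $r$ distinct values occurring with multiplicities $m_1,\dots,m_r$, where $m_1+\cdots+m_r=n$. The only triples in a one-row diagram are the (possibly) degenerate ones involving a basement cell, and since $b$ is weakly increasing none of them is an inversion triple, so $\inv(\sigma)=0$; meanwhile the $1\times n$ rectangle $\sigma$ has exactly $r$ distinct columns, occurring with multiplicities $m_1,\dots,m_r$, so $\perm_t(\sigma)=\binom{n}{m_1,\dots,m_r}_t$. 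Thus the right-hand side of \eqref{eq:crucial} equals $\binom{n}{m_1,\dots,m_r}_t$.

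Next I would identify $\mathcal{F}(\sigma)$. Since the basement below row $1$ is constant, the whole of row $1$ is a single block $B^{(1)}=b$, so \cref{def:family} with $m=1$ gives $\mathcal{F}(\sigma)=\{\mathcal{T}_{\tilde w}^{(1)}(\sigma)\mid w\in\Sym(B^{(1)})\}$. In a one-row tableau there is no row above row $1$, so step $(2)$ of every operator $\mathcal{T}_i^{(1)}$ terminates immediately and $\mathcal{T}_i^{(1)}$ is merely the interchange of the entries in columns $i$ and $i+1$, which is legal exactly when those entries differ. The crux is to show that for each $w\in\Sym(B^{(1)})$ the tableau $\mathcal{T}_{\tilde w}^{(1)}(\sigma)$ is the one-row tableau whose row is $w$. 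To prove this, write the PDS of $\tilde w$ as a reduced word $s_{i_k}\cdots s_{i_1}$ and apply $s_{i_1},s_{i_2},\dots$ to $b$ in succession: the number of inversions of a word changes by at most $1$ under an adjacent transposition, it starts at $\inv(b)=0$, and it ends at $\ell(\tilde w)=k$ (the equality by \cref{rem:multinomial}), so each of the $k$ steps must raise it by exactly $1$; in particular every swap is between two \emph{distinct} entries, so each operator $\mathcal{T}_{i_j}^{(1)}$ in the sequence is well-defined, and the composite transposition is $\tilde w$, which carries $b$ to $w$ by the definition of $\tilde w$.

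Finally I would assemble the generating function. Distinct $w\in\Sym(B^{(1)})$ produce distinct one-row tableaux, so $\mathcal{F}(\sigma)$ is exactly the set of one-row tableaux obtained by rearranging the entries of $\sigma$, each appearing once; and $\inv$ of the one-row tableau with row $w$ is the number of inversions of the word $w$, which is $\ell(\tilde w)$ by \cref{rem:multinomial}. Hence
\[
\sum_{\tau\in\mathcal{F}(\sigma)}t^{\inv(\tau)}
=\sum_{w\in\Sym(B^{(1)})}t^{\ell(\tilde w)}
=\sum_{\tilde w\in\widetilde{\Sym}(B^{(1)})}t^{\ell(\tilde w)}
=\binom{n}{m_1,\dots,m_r}_t
=t^{\inv(\sigma)}\perm_t(\sigma),
\]
where the third equality is the identity of Foata--Han recorded in \cref{rem:multinomial} and the last uses $\inv(\sigma)=0$. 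This is exactly \eqref{eq:crucial}, which is what we wanted.

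The step I expect to be the main obstacle is the crux of the second paragraph, that $\mathcal{T}_{\tilde w}^{(1)}(\sigma)$ realizes the rearrangement $w$: one has to track the right-to-left reading convention for a PDS and the action of permutations on words, and — the only point that is not purely formal — one has to rule out an operator in the sequence being applied to two equal entries, which would make it undefined; this is precisely what the inversion-count monotonicity argument is for.
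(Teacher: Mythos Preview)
Your proof is correct and follows essentially the same approach as the paper's, which simply asserts that the elements of $\mathcal{F}(\sigma)$ are the rearrangements $w$ of the row with $\inv=\ell(\tilde w)$ and then invokes \cref{rem:multinomial}. You fill in the details the paper leaves implicit---most notably the well-definedness of each operator in the sequence $\mathcal{T}_{\tilde w}^{(1)}$---via a clean inversion-count monotonicity argument.
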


\begin{proof}
If $\sigma$ is a sorted tableau consisting of a single row, say with entries $B=(b_1,\dots,b_n)$, then the entries are weakly increasing and $\inv(\sigma)=0$.  For any $\tau\in\mathcal{F}(\sigma)$, the entries $w$ are in $\Sym(B)$, and $\inv(\tau)=\ell(\widetilde{w})$, which puts us precisely in the situation of \cref{rem:multinomial}. The result follows from \cref{rem:multinomial}, after noting that $\perm_t(\sigma)$ is precisely the multinomial coefficient that appears in \cref{rem:multinomial}.
\end{proof}

Recall that if $\tau$ is a tableau, then we use $\tau \vert_{a}^b$ to denote the restriction of $\tau$ to rows $a$ through $b$.

\begin{lemma}\label{lem:perm}
Let $\sigma$ be a rectangular sorted tableau, whose topmost row (row $m$) has blocks $B^{(1)}, \dots, B^{(\ell)}$.  Then 
\[
\perm_t(\sigma) = \perm_t(\sigma \vert_1^{m-1}) \prod_{i=1}^{\ell} \perm_t(B^{(i)}).
\]
\end{lemma}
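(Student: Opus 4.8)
The plan is to unwind both sides of the claimed identity directly from the definition of $\perm_t$ and the block decomposition of the topmost row, reducing everything to a statement about how identical columns of $\sigma$ relate to identical columns of $\sigma\vert_1^{m-1}$ together with repeated entries inside each block $B^{(i)}$. First I would recall that $\perm_t(\sigma)$ is, by \eqref{eq:perm}, the $t$-multinomial coefficient $\binom{n}{u_1,\dots,u_j}_t$ where $u_1,\dots,u_j$ record the multiplicities of the distinct column fillings of $\sigma$ (the rectangular case), and similarly $\perm_t(\sigma\vert_1^{m-1})$ records the multiplicities $v_1,\dots,v_p$ of the distinct column fillings of the $(m-1)$-row tableau obtained by deleting the top row. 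The key combinatorial observation is that two columns of $\sigma$ are identical if and only if (i) their restrictions to the bottom $m-1$ rows are identical — equivalently, they lie in the same block $B^{(i)}$ of row $m$ — and (ii) their entries in row $m$ are equal. Hence the distinct column fillings of $\sigma$ refine the distinct column fillings of $\sigma\vert_1^{m-1}$: the $v_i$ columns sitting over a fixed bottom-pattern split, according to their row-$m$ entries, into groups whose sizes are exactly the multiplicities of the distinct entries of the corresponding block $B^{(i)}$.

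Next I would invoke the standard factorization of $t$-multinomial coefficients: if a set of size $n$ is partitioned into blocks of sizes $v_1,\dots,v_p$, and each block of size $v_i$ is further partitioned into parts of sizes $m^{(i)}_1,\dots,m^{(i)}_{k_i}$, then
\[
\binom{n}{\{m^{(i)}_s\}}_t = \binom{n}{v_1,\dots,v_p}_t \prod_{i=1}^{p} \binom{v_i}{m^{(i)}_1,\dots,m^{(i)}_{k_i}}_t,
\]
which is the $t$-analogue of the iterated multinomial identity and follows from $\binom{a+b}{a}_t$ being a product of cyclotomic-type factors, or more simply from the Gaussian-binomial recursion. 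The left-hand side of this identity is $\perm_t(\sigma)$ by the refinement described above (here I identify the blocks $B^{(i)}$ of row $m$ that have a given bottom-pattern; note several blocks might a priori share a bottom-pattern, but in fact by \cref{def:blocks} a block is determined by the common column below it, so distinct blocks have distinct bottom-patterns and the indexing matches up cleanly). The first factor on the right is exactly $\perm_t(\sigma\vert_1^{m-1})$, and the remaining product is $\prod_{i=1}^{\ell}\binom{v_i}{\dots}_t = \prod_{i=1}^{\ell}\perm_t(B^{(i)})$, where by $\perm_t(B^{(i)})$ we mean the $t$-multinomial coefficient attached to the multiplicities of the distinct entries of the one-row word $B^{(i)}$ — consistent with the usage of $\perm_t$ on single rows implicit in \cref{lem:base} and \cref{rem:multinomial}.

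Finally I would note that the rectangular case is all that is needed: since $\perm_t$ of a concatenation of rectangular sorted tableaux of distinct heights is the product of the $\perm_t$'s of the pieces (by definition), and the blocks of the top row of a general sorted tableau are blocks within the unique maximal-height rectangular piece, the general statement follows immediately from the rectangular one — though for the intended application (the inductive proof of \cref{lem:genfun2}) only the rectangular version is invoked. The main obstacle, such as it is, is purely bookkeeping: making sure the indexing of "distinct column fillings of $\sigma$" versus "distinct column fillings of $\sigma\vert_1^{m-1}$ together with distinct entries within each block" lines up exactly, so that the three multiplicity data feeding the iterated $t$-multinomial identity are precisely $\{u_s\}$, $\{v_i\}$, and $\{$multiplicities in $B^{(i)}\}$. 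Once that correspondence is stated carefully, the identity is a one-line consequence of the $t$-multinomial factorization, so I do not anticipate any genuine difficulty.
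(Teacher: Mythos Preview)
Your proposal is correct and follows essentially the same route as the paper's proof: both arguments identify the blocks $B^{(i)}$ of row $m$ with the maximal groups of identical columns in $\sigma\vert_1^{m-1}$, observe that the identical-column multiplicities of $\sigma$ are the refinement of those of $\sigma\vert_1^{m-1}$ by the entry multiplicities within each $B^{(i)}$, and then conclude via the iterated $t$-multinomial identity $\binom{n}{\{m^{(i)}_s\}}_t = \binom{n}{v_1,\dots,v_\ell}_t \prod_i \binom{v_i}{m^{(i)}_1,\dots}_t$. Your digression on the non-rectangular case is unnecessary (the lemma is stated only for rectangular $\sigma$), but otherwise the bookkeeping you describe matches the paper's argument.
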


\begin{proof}
For $j=1,\ldots,r$, set $m_j \coloneqq |B^{(j)}|$, and let there be $n_j$ sets of identical entries whose sizes are the set $\left\{b^{(j)}_{\ell}\right\}_{1\leq \ell \leq n_j}$ with $\sum_{1\leq \ell \leq n_j} b^{(j)}_{\ell}=m_j$. By the definition of a block decomposition, this implies $\sigma$ has $\sum_{1 \leq j \leq r}m_j$ sets of identical columns whose sizes are the set $\displaystyle \uplus_{1 \leq j \leq r} \left\{b^{(j)}_{\ell}\right\}_{1\leq \ell \leq n_j}$.

Thus by the definition of $\perm_t$, the right hand side of the equation equals
\[
{n \choose m_1,\ldots,m_r}_t\  \prod_{1 \leq j \leq r} {m_j \choose b^{(j)}_1,\ldots,b^{(j)}_{n_j}}_t,
\]
which simplifies to $\perm_t(\sigma)$.
\end{proof}

\begin{proposition}\label{prop:additive}
Let $\sigma$ denote a sorted tableau with one block $B = (b_1,\dots,b_n)$ in the $r^{th}$ row.  Let $\tau$ denote a new tableau obtained from $\sigma$ by permuting the entries $b_1,\dots,b_n$	so as to get the word $w_1, w_2, \dots, w_n$. Let $\tilde{w}$ be as in \cref{def:Sym}. Then 
	\[
		\inv(\mathcal{T}^{(r)}_{\tilde{w}}(\tau)) = \inv(\tau) + \ell(\tilde{w}).
	\]
\end{proposition}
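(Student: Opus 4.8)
The plan is to reduce this to repeated application of \cref{lem:coinv}, using the combinatorial identity between the length of $\tilde{w}$ and its number of inversions (\cref{rem:multinomial}), together with the fact that $\mathcal{T}^{(r)}_{\tilde{w}}$ is built from the PDS of $\tilde{w}$ in the canonical reduced word $\mathbf{w_0}$. First I would set up notation: write the PDS of $\tilde{w}$ in $\mathbf{w_0}$ as $s_{i_k}\cdots s_{i_1}$, so that $\mathcal{T}^{(r)}_{\tilde{w}} = \mathcal{T}^{(r)}_{i_k}\cdots\mathcal{T}^{(r)}_{i_1}$ with $k = \ell(\tilde{w})$ (since the PDS is a reduced expression). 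Applying these operators one at a time to $\tau$, I get a chain $\tau = \tau_0, \tau_1 = \mathcal{T}^{(r)}_{i_1}(\tau_0), \dots, \tau_k = \mathcal{T}^{(r)}_{\tilde{w}}(\tau)$. By \cref{lem:coinv}, each step changes $\inv$ by exactly $\pm 1$, where the sign is $+1$ iff the relevant triple $T_j$ (consisting of the cells $(i_j+1,r),(i_j,r),(i_j,r-1)$ in $\tau_{j-1}$) is \emph{not} a counterclockwise triple. So the whole point is to show that at \emph{every} step the sign is $+1$, i.e.\ that each operator in the PDS-sequence genuinely increases the number of inversion triples.

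The key step — and I expect this to be the main obstacle — is showing that the PDS construction forces each $\mathcal{T}^{(r)}_{i_j}$ to act as an ``increasing'' move on the bottom row of the single-block tableau. Here is how I would argue it. Since $\sigma$ has a single block $B=(b_1,\dots,b_n)$ in row $r$ with $b_1\le\cdots\le b_n$ (weakly increasing, as $\sigma$ is sorted and the block structure below $r$ is trivial in the relevant sense for a single block), the entries of row $r-1$ below this block are all equal to one common value, and every triple $T$ of the form $(j,r),(i,r),(i,r-1)$ with $i<j$ reduces (by the triple definition with a constant bottom entry $c = \sigma(i,r-1)$) to a comparison of $\sigma(i,r)$ and $\sigma(j,r)$ alone — effectively, ``$T$ is counterclockwise iff the pair $(\sigma(i,r),\sigma(j,r))$ is out of weakly-increasing order relative to the cyclic order determined by $c$.'' I would then invoke \cref{lem:tworow}: the intermediate tableaux $\tau_j$ have bottom row equal to a word $w^{(j)}$ obtained from $(b_1,\dots,b_n)$ by a permutation of length exactly $\inv(\tau_j)$ (this is precisely the content of that lemma, since after a PDS-prefix the configuration is still governed by a shortest permutation). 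Combined with \cref{rem:multinomial} — which identifies $\ell(\tilde{w})$ with the number of inversions of $w$, and more to the point tells us the PDS of $\tilde{w}$ in $\mathbf{w_0}$ is a reduced word whose every left-truncation is again a PDS (\cref{rem:truncation}) — I conclude that each prefix $\tau_j$ satisfies $\inv(\tau_j) = \ell$ of its associated shortest permutation, which strictly increases by $1$ at each step. Hence $\inv(\tau_k) = \inv(\tau_0) + k = \inv(\tau) + \ell(\tilde{w})$.

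Alternatively — and this may be cleaner to write — I would phrase the induction directly on $\ell(\tilde{w})$. For the inductive step, write the PDS of $\tilde{w}$ as $s_{i_k}\cdot(\text{PDS of }\tilde{w}')$ where $\tilde{w}' = s_{i_1}\cdots s_{i_{k-1}}$ has its PDS equal to the truncation (legitimate by \cref{rem:truncation}), with $\ell(\tilde{w}') = k-1$. By the inductive hypothesis, $\inv(\mathcal{T}^{(r)}_{\tilde{w}'}(\tau)) = \inv(\tau) + (k-1)$, and the bottom row of $\mathcal{T}^{(r)}_{\tilde{w}'}(\tau)$ is the corresponding word $w'$ of $\ell$-value $k-1$. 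It remains to check that applying $\mathcal{T}^{(r)}_{i_k}$ to this tableau increases $\inv$ by $1$, i.e.\ that the triple at columns $(i_k, i_k+1)$ is \emph{not} counterclockwise before the swap — equivalently, that $s_{i_k}$ acting on $w'$ is a length-increasing move in $S_n$. This follows from the fact that $s_{i_k}\tilde{w}' = \tilde{w}$ with $\ell(s_{i_k}\tilde{w}') = \ell(\tilde{w}') + 1$ (a defining property of having a reduced expression $s_{i_k}\cdot(\cdots)$ for $\tilde{w}$), translated through \cref{lem:tworow}'s dictionary between $L$ and $\inv$ for the two-row slice consisting of row $r$ and the constant row $r-1$. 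This is exactly the place where the ``positive'' in positive distinguished subexpression is used: it guarantees no cancellation, so every operator in the sequence contributes $+1$ rather than $-1$ to $\inv$, giving the claimed identity.
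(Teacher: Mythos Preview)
Your proposal is correct and follows the paper's approach, which is simply to invoke \cref{lem:tworow} and \cref{lem:coinv}; the key point you identify---that each step in a reduced word for $\tilde{w}$ must contribute $+1$ to $\inv$ because the two-row $\inv$ equals $L$ by \cref{lem:tworow}, and $L$ climbs from $0$ to $\ell(\tilde{w})$ in $\ell(\tilde{w})$ steps of size $\pm 1$---is exactly the intended argument. Two small corrections: the sorted block need not satisfy $b_1\le\cdots\le b_n$ in the usual order but rather in the cyclic order relative to the common entry $c$ below it (as in the proof of \cref{lem:tworow}), and the PDS property per se is not needed---any reduced expression for $\tilde{w}$ yields the same conclusion, since all that matters is that length increases by one at each step.
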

\begin{proof}
	This follows from \cref{lem:tworow} and \cref{lem:coinv}.
\end{proof}

Note that when we choose $\tau$ as above, the bottom rows $(1,\dots, r-1)$ of $\tau$ are identical to those of $\sigma$ and hence are themselves a sorted tableau.

Because the operators only affect the triples in the pair of columns where they act, \cref{cor:operator} follows from \cref{prop:additive}.

\begin{corollary}\label{cor:operator}
	We use the notation of \cref{def:family}. For each $\tau \in \mathcal{F}^{(r)}$ and $\tilde{w}\in \widetilde{\Sym}(B^{(1)}) \times \dots \times
	\widetilde{\Sym}(B^{(\ell)})$, we have that 
		\[\inv(\mathcal{T}^{(r)}_{\tilde{w}}(\tau)) = \inv(\tau) 
		+ \ell(\tilde{w}).\]
\end{corollary}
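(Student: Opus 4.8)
\textbf{Proof proposal for \cref{cor:operator}.}

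The plan is to deduce this from \cref{prop:additive} by a block-by-block argument, using the fact that each operator $\mathcal{T}^{(r)}_i$ only disturbs triples involving columns $i$ and $i+1$. Write the blocks of row $r$ of $\tau$ as $B^{(1)},\dots,B^{(\ell)}$, occupying disjoint ranges of columns, say $B^{(p)}$ occupies columns $c_{p-1}+1,\dots,c_p$ with $0=c_0<c_1<\dots<c_\ell=n$. The element $\tilde w\in\widetilde{\Sym}(B^{(1)})\times\dots\times\widetilde{\Sym}(B^{(\ell)})$ factors as a product $\tilde w=\tilde w_1\cdots\tilde w_\ell$ of commuting permutations, where $\tilde w_p$ acts only on the columns of $B^{(p)}$, and each $\tilde w_p$ is itself the shortest permutation realizing the chosen rearrangement of the block $B^{(p)}$. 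Correspondingly $\mathcal{T}^{(r)}_{\tilde w}=\mathcal{T}^{(r)}_{\tilde w_1}\cdots\mathcal{T}^{(r)}_{\tilde w_\ell}$ (the order does not matter here since the supports are disjoint and non-adjacent as column ranges — one should note that consecutive blocks, though contiguous, are separated for the purpose of triples because a triple needs two cells in the \emph{same} pair of adjacent columns in rows $r$ and $r-1$, and between blocks the row-$(r-1)$ entries differ, so no propagating swap crosses a block boundary).

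The first step is to establish additivity over a single block: applying $\mathcal{T}^{(r)}_{\tilde w_p}$ to any tableau whose rows $1,\dots,r-1$ are sorted changes $\inv$ by exactly $\ell(\tilde w_p)$, and — crucially — does not alter any triple lying entirely outside the columns of $B^{(p)}$. The first half is precisely \cref{prop:additive} applied to the restriction of the tableau to the columns of $B^{(p)}$ (whose row $r$ is a single block by construction of the block decomposition); the second half is the statement that $\mathcal{T}^{(r)}_i$ only affects triples in columns $i,i+1$, which is exactly the observation already invoked in the proof of \cref{lem:coinv} (and recorded in the sentence preceding the corollary). So $\mathcal{T}^{(r)}_{\tilde w_p}$ leaves the other blocks' columns untouched and in particular leaves those tableaux again with sorted rows $1,\dots,r-1$, so the hypothesis of \cref{prop:additive} persists as we move from block to block.

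The second step is the telescoping. Starting from $\tau=\tau^{(0)}$, set $\tau^{(p)}=\mathcal{T}^{(r)}_{\tilde w_p}(\tau^{(p-1)})$ for $p=1,\dots,\ell$, so that $\tau^{(\ell)}=\mathcal{T}^{(r)}_{\tilde w}(\tau)$. By the single-block statement, $\inv(\tau^{(p)})=\inv(\tau^{(p-1)})+\ell(\tilde w_p)$ for each $p$ (the hypothesis "rows below $r$ are sorted" holds at each stage by the non-interference just argued). Summing, $\inv(\mathcal{T}^{(r)}_{\tilde w}(\tau))=\inv(\tau)+\sum_{p=1}^{\ell}\ell(\tilde w_p)=\inv(\tau)+\ell(\tilde w)$, where the last equality is because $\tilde w$ is the product of commuting permutations with disjoint supports, so lengths add.

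I expect the main obstacle to be the bookkeeping around block boundaries: one must be careful that a propagating swap triggered by $\mathcal{T}^{(r)}_i$ inside block $B^{(p)}$ cannot "leak" into the columns of a neighbouring block, and that the column ranges of $\tilde w_p$ genuinely involve only adjacent transpositions internal to $B^{(p)}$ (so that no $\mathcal{T}^{(r)}_{c_p}$, straddling two blocks, is ever invoked). Both points follow from the definition of $\widetilde{\Sym}(B^{(1)})\times\dots\times\widetilde{\Sym}(B^{(\ell)})$ as permuting entries only within blocks, together with \cref{lem:coinv}'s description of which cells a single operator touches, but they deserve an explicit sentence. Everything else is routine once \cref{prop:additive} is in hand.
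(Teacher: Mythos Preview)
Your proposal is correct and follows essentially the same approach as the paper. The paper's proof is a single sentence --- ``Because the operators only affect the triples in the pair of columns where they act, \cref{cor:operator} follows from \cref{prop:additive}'' --- and your argument is a careful unpacking of exactly that sentence: factor $\tilde w$ into commuting block-wise pieces, invoke locality of $\mathcal{T}_i^{(r)}$ to isolate each block, apply \cref{prop:additive} per block, and telescope.

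One small sharpening: when you write ``applying $\mathcal{T}^{(r)}_{\tilde w_p}$ to any tableau whose rows $1,\dots,r-1$ are sorted changes $\inv$ by exactly $\ell(\tilde w_p)$,'' you are implicitly also using that row $r$ itself (in the columns of $B^{(p)}$) is the sorted word $B^{(p)}$. This holds because $\tau\in\mathcal{F}^{(r)}$ actually has rows $1,\dots,r$ identical to those of $\sigma$ (the operators $\mathcal{T}^{(s)}$ with $s>r$ used to build $\mathcal{F}^{(r)}$ only touch rows $\ge s$), not just rows $1,\dots,r-1$; it would be worth saying this explicitly so that the hypothesis of \cref{prop:additive} is visibly satisfied. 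Your remarks about block boundaries (that every $s_i$ used is internal to some block, so no $\mathcal{T}^{(r)}_{c_p}$ is ever invoked) are exactly the right bookkeeping.
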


\begin{lemma}\label{lem:perm1}
Let $\sigma$ be a sorted tableau with one block $B=(b_1,\dots,b_n)$ in the $r^{th}$ row, and let $B^{(1)},B^{(2)},\dots,B^{(\ell)}$ be the blocks in row $r+1$. If we let $m_i = |B^{(i)}|$, this means that $B$ contains $\ell$ distinct entries, and $b_1 = \dots = b_{m_1}$, $b_{m_1+1} = \dots = b_{m_1+m_2}$, etc. Then we have 
	\[
		\sum_{{\tilde{w}}\in \widetilde{\Sym}(B)} t^{\ell({\tilde{w}})} = {n \choose m_1, \dots, m_{\ell}}_t = \perm_t(B).
\]
\end{lemma}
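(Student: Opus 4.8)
The plan is to derive Lemma~\ref{lem:perm1} directly from Remark~\ref{rem:multinomial}, together with an identification of the $t$-multinomial coefficient appearing there with $\perm_t(B)$. First I would observe that by the definition of $\widetilde{\Sym}(B)$ in Definition~\ref{def:Sym}, the set $\widetilde{\Sym}(B)$ records exactly the shortest permutations $\tilde w$ realizing each distinct rearrangement of the word $B$. Since $B$ has $\ell$ distinct entries with multiplicities $m_1,\dots,m_\ell$, i.e.\ $B = 1^{m_1} 2^{m_2}\cdots \ell^{m_\ell}$ after relabeling entries by their rank, Remark~\ref{rem:multinomial} (which invokes \cite[Theorem~5.1]{FoataHan}) gives directly
\[
\sum_{\tilde w\in\widetilde{\Sym}(B)} t^{\ell(\tilde w)} = {n\choose m_1,\dots,m_\ell}_t.
\]

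The only remaining point is to check that ${n\choose m_1,\dots,m_\ell}_t$ equals $\perm_t(B)$ in the sense defined in this section. Here I would unwind the definition of $\perm_t$: when $\sigma$ is viewed as a one-row tableau (or block) with entries $b_1\le\cdots\le b_n$ having $\ell$ distinct values repeated $m_1,\dots,m_\ell$ times, the associated "columns" are single cells, and among the $n$ of them there are exactly $\ell$ distinct fillings occurring with multiplicities $m_1,\dots,m_\ell$. Hence by~\eqref{eq:perm}, $\perm_t(B) = {n\choose m_1,\dots,m_\ell}_t$, which completes the chain of equalities in the lemma statement.

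The hypothesis relating $B$ to the blocks $B^{(1)},\dots,B^{(\ell)}$ of row $r+1$ — namely that the multiplicity pattern of $B$ is exactly $(m_1,\dots,m_\ell)$ with $m_i = |B^{(i)}|$ — is what guarantees $B$ genuinely has $\ell$ distinct entries arranged in the claimed consecutive-blocks pattern; I would note this is forced by $\sigma$ being a sorted tableau with a single block in row $r$, so that the entries of row $r$ are weakly increasing and the cells of row $r+1$ above equal-entry runs of row $r$ form the blocks $B^{(i)}$. This is a bookkeeping observation rather than a real difficulty.

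Honestly, there is no substantive obstacle here: the lemma is essentially a restatement of Remark~\ref{rem:multinomial} packaged in the $\perm_t$ notation, and the proof is a two-line invocation of the Foata--Han identity plus an unwinding of definitions. If anything, the one place to be slightly careful is making sure the indexing convention "$B$ weakly increasing, with the $m_i$ matching the row-$(r+1)$ block sizes" lines up with the hypotheses of \cite[Theorem~5.1]{FoataHan}, so that the exponents $\ell(\tilde w)$ really are the inversion numbers of the rearrangements; but this is already handled in Remark~\ref{rem:multinomial}, so I would simply cite it.
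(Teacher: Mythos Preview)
Your proposal is correct and follows exactly the paper's approach: the paper's proof is the single sentence ``This follows from \cref{rem:multinomial},'' and you invoke precisely that remark, additionally spelling out why the resulting $t$-multinomial coincides with $\perm_t(B)$ via \eqref{eq:perm}. Your extra paragraph explaining how the hypothesis on the blocks $B^{(1)},\dots,B^{(\ell)}$ forces the multiplicity pattern $(m_1,\dots,m_\ell)$ is helpful context but, as you say, not a genuine difficulty.
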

\begin{proof}
This follows from \cref{rem:multinomial}.
\end{proof}

\begin{lemma}\label{lem:rectangle}
\cref{lem:genfun2} is true for arbitrary rectangular sorted tableaux.
\end{lemma}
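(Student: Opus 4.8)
The plan is to prove \cref{lem:rectangle} by induction on the number $m$ of rows of the rectangular sorted tableau $\sigma$, using \cref{lem:base} as the base case ($m=1$). For the inductive step, suppose the claim holds for all rectangular sorted tableaux with $m-1$ rows, and let $\sigma$ have $m$ rows and $n$ columns. Write $\sigma' = \sigma\vert_1^{m-1}$ for the restriction of $\sigma$ to its bottom $m-1$ rows; this is again a rectangular sorted tableau, so by the inductive hypothesis $\sum_{\tau'\in\mathcal{F}(\sigma')} t^{\inv(\tau')} = t^{\inv(\sigma')}\perm_t(\sigma')$. The idea is to relate $\mathcal{F}(\sigma)$ to $\mathcal{F}(\sigma')$ together with the permutations of the top row of $\sigma$ within its blocks.

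First I would unwind \cref{def:family}: the set $\mathcal{F}^{(m-1)}(\sigma)$ consists of the tableaux $\mathcal{T}^{(m)}_{\tilde w}(\sigma)$ as $w$ ranges over $\widetilde{\Sym}(B^{(1)})\times\dots\times\widetilde{\Sym}(B^{(\ell)})$, where $B^{(1)},\dots,B^{(\ell)}$ are the blocks of row $m$; and then $\mathcal{F}(\sigma) = \mathcal{F}^{(0)}(\sigma)$ is obtained by continuing to apply operators in rows $m-1, m-2, \dots, 1$. The key structural observation is that the operators acting in rows $1,\dots,m-1$ never touch row $m$ (they only modify rows $\le m-1$, possibly propagating upward but stopped below row $m$ since... actually one must be careful here), so that $\mathcal{F}(\sigma)$ is in bijection with pairs $(\tau', u)$ where $\tau'\in\mathcal{F}(\sigma')$ and $u$ records a choice of block-permutation of row $m$ — more precisely, I would show that applying $\mathcal{T}^{(m)}_{\tilde w}$ first and then running the family construction on rows $1,\dots,m-1$ produces exactly the concatenation, over $\tau'\in\mathcal{F}(\sigma')$, of row $m$ rearranged by $\tilde w$. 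Then by \cref{cor:operator} (or \cref{prop:additive}), each such $\mathcal{T}^{(m)}_{\tilde w}$ adds exactly $\ell(\tilde w)$ to $\inv$, and because the operators in rows $\le m-1$ commute in effect with the row-$m$ rearrangement (the inversions contributed by row $m$ against rows $m-1$ and the triples involving row $m$ behave additively — this needs \cref{rem:triples} and the two-column analysis of \cref{lem:coinv}), one gets
\[
\sum_{\tau\in\mathcal{F}(\sigma)} t^{\inv(\tau)} = \Bigl(\sum_{\tilde w} t^{\ell(\tilde w)}\Bigr)\Bigl(\sum_{\tau'\in\mathcal{F}(\sigma')} t^{\inv(\tau') + (\text{correction})}\Bigr).
\]
By \cref{lem:perm1} the first factor is $\prod_{i=1}^\ell \perm_t(B^{(i)})$, and by the inductive hypothesis the second is $t^{\inv(\sigma)}\perm_t(\sigma')$ (absorbing the correction terms, which compare $\inv(\sigma)$ to $\inv(\sigma')$ via the triples straddling row $m$). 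Finally, \cref{lem:perm} gives $\perm_t(\sigma') \prod_{i=1}^\ell \perm_t(B^{(i)}) = \perm_t(\sigma)$, completing the induction.

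The main obstacle I anticipate is making precise the claim that the family construction ``factors'' cleanly through the top row: one must verify that applying $\mathcal{T}^{(m)}_{\tilde w}$ to $\sigma$ and then building the rest of the family yields the same multiset of tableaux (with the same $\inv$-generating function) as building the family of $\sigma'$ and then independently rearranging row $m$ in each resulting tableau by $\tilde w$. This requires checking (i) that the block decomposition of rows $1,\dots,m-1$ seen by the lower operators does not depend on what happens in row $m$, which is essentially immediate since blocks are defined by the rows strictly below, and (ii) that the $\inv$ contribution splits additively: inversion triples lying entirely in rows $\le m-1$, inversion triples with a cell in row $m$ (cells $(j,m),(i,m),(i,m-1)$), and degenerate issues at the bottom. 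Point (ii) is where \cref{lem:coinv}'s two-column bookkeeping and \cref{rem:triples} do the real work — one needs that the lower operators preserve the count of triples involving row $m$ (because such triples only involve columns $i,i+1$ at a given operator and row $m-1$ entries which are block-constant) and that $\mathcal{T}^{(m)}_{\tilde w}$ changes $\inv$ by exactly $\ell(\tilde w)$ regardless of the subsequent lower-row operators. I would isolate this as the crux and prove it carefully, then the rest is bookkeeping with \cref{lem:perm}, \cref{lem:perm1}, and \cref{cor:operator}.
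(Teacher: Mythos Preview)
Your proposal is correct and follows essentially the same inductive approach as the paper: induct on $m$, peel off the top row via \cref{cor:operator} to get the factor $\sum_{\tilde w} t^{\ell(\tilde w)} = \prod_i \perm_t(B^{(i)})$ (by \cref{lem:perm1}), apply the inductive hypothesis to $\sigma' = \sigma\vert_1^{m-1}$, and combine via \cref{lem:perm}. Your worry about whether the lower operators $\mathcal{T}_i^{(r)}$ ($r<m$) can propagate into row $m$ is a red herring---they \emph{can}, so $\mathcal{F}(\sigma)$ is not literally a product of fillings---but the paper never needs this: \cref{cor:operator} says each operator adds exactly $\ell(\tilde w)$ to $\inv$ regardless of how far the swaps propagate, so the $\inv$-generating function factors even though the underlying fillings do not decompose as concatenations.
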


\begin{proof}
Let $\sigma$ be a sorted tableau which is an $m \times n$ rectangle.  We use induction on the number of rows. The base case is true by \cref{lem:base}. Suppose \cref{lem:genfun2} holds for tableaux with $(m-1)$ rows.

We need to analyze $\sum_{\tau \in \mathcal{F}(\sigma)}  t^{\inv(\tau)}$. Recall from \cref{def:family} that $\mathcal{F}(\sigma)$ consists of tableaux of the form 
\[
\T_{\tilde{w}^{(1)}}^{(1)} \circ\T_{\tilde{w}^{(2)}}^{(2)} \circ \dots \circ \T_{\tilde{w}^{(m)}}^{(m)}(\sigma).
\]
Let the top row of $\sigma$ have blocks $B^{(1)}, \dots, B^{(\ell)}$, and let $\sigma'$ denote $\sigma|_1^{m-1}$. Note that the bottom $m-1$ rows of $\T_{\tilde{w}^{(m)}}^{(m)}(\sigma)$ form the sorted tableau $\sigma'$, and row $m$ is given by $w^{(m)}\in \Sym(B^{(1)}) \times \dots \times \Sym(B^{(\ell)})$. Also, $\inv(\sigma) = \inv(\sigma')+s$, where $s$ is the number of counterclockwise triples in row $m$ of $\sigma$.

By \cref{cor:operator}, $\inv(\T_{\tilde{w}^{(m)}}^{(m)}(\sigma)) = \ell(\tilde{w}^{(m)})+ \inv(\sigma) = \ell(\tilde{w}^{(m)})+s+\inv(\sigma')$.  When we apply the operators $\T_{\tilde{w}^{(1)}}^{(1)} \circ \dots \circ \T_{\tilde{w}^{(m-1)}}^{(m-1)}$ to $\T_{\tilde{w}^{(m)}}^{(m)}(\sigma),$ they only affect $\inv$ coming from triples in rows $1$ through $m-1$. Moreover, the tableaux obtained by applying the operators $\T_{\tilde{w}^{(1)}}^{(1)} \circ \dots \circ \T_{\tilde{w}^{(m-1)}}^{(m-1)}$ to $\sigma'$ are precisely the elements of $\mathcal{F}(\sigma')$ (where we think of the rows of $\sigma'$ as being labeled from bottom to top by $1$ through $m-1$).

Therefore we have that 
\[
\sum_{\tau \in \mathcal{F}(\sigma)}  t^{\inv(\tau)} 
		= t^s \sum_{w^{(m)}} t^{\ell(\tilde{w}^{(m)})} 
		\sum_{\tau' \in \mathcal{F}(\sigma')} t^{\inv(\tau')},
\]
where the first sum on the right-hand side is over all $w^{(m)} \in \Sym(B^{(1)}) \times \dots \times \Sym(B^{(\ell)}).$
Applying \cref{lem:perm1} to the first sum on the right-hand side, and the inductive hypothesis to the second sum on the right-hand side, we find that this is equal to 
\[
t^s \prod_{i=1}^{\ell} \perm_t(B^{(i)})  \cdot t^{\inv(\sigma')} \perm_t(\sigma').
\]  
This simplifies to $\perm_t(\sigma) t^{\inv(\sigma)}$ by \cref{lem:perm} and \cref{lem:perm1}, as desired.
\end{proof}

Combining \cref{lem:shape} and \cref{lem:rectangle} yields the proof of  \cref{lem:genfun2}.

\section{A compact formula for integral  Macdonald polynomials}\label{sec:Jcompact}

In this section, we provide a compact formula for the symmetric Macdonald polynomials $P_{\mu}(X;q,t)$ and their integral form version $J_{\mu}(X;q;t)$. 
Recall that Macdonald defined the integral form $J_{\mu}(X;q,t)$ as
follows.
\begin{definition}
	Let $\mu$ be a partition.  Then 
\begin{align}
\label{JDef}
J_{\mu}(X;q,t) &= P_{\mu}(X;q,t) \text{PR}1(\mu),
\end{align}
where 
\begin{equation*}
\label{PR1}
\text{PR}1(\mu) = \prod _{s \in \dg(\mu')} (1-q^{\arm(s)}t^{\leg(s)+1}) = \prod _{s \in \dg(\mu)} (1-q^{\leg(s)}t^{\arm(s)+1}).
\end{equation*}
\end{definition}

Throughout this section we will be working with 
diagrams $\dg(\alpha)$ where $\alpha = (\alpha_1,\dots,\alpha_n)$
is a weak composition. 
For these diagrams we need to generalize the concept of triple 
defined for partition diagrams in Section \ref{sec:def}.  We use the notational conventions from \cite[Appendix A: pp.~124,~137]{Hag08}; the reader may wish to consult that source for more background and examples of these concepts.  

A {\it triple} consists of two adjacent squares $(u,r)$ and $(u,r-1)$ in the same column, together with a third square $(v,m)$ in the arm of the square $(u,r)$.  If $m=r$, so that 
$(v,r)$ is in a column to the right of $(u,r)$, we call the triple a {\it type A triple}, and it has the configuration $\tableau{\ \\ \ }\quad \tableau{\ }$\,. If $m=r-1$, so $(v,r-1)$ is in a column to the left of $(u,r)$, we call the triple a {\it type B triple}, and it has the configuration $\tableau{\\\ } \quad \tableau{\ \\  \ }$\,.  Note that all the triples from Section \ref{sec:def} were type A triples. A \emph{degenerate triple} consists of a pair of squares in the bottom row, with the rightmost of these squares in the arm of the leftmost (i.e. pairs of entries in row $1$, with the column of the rightmost square not higher than the column of the leftmost).
Two cells \emph{attack} each other if either the cells are in the same row, or they are in adjacent rows, and in different columns, with the rightmost cell in a row strictly above the other cell.  A filling is \emph{nonattacking} if it does not contain any two cells that both attack each other and contain the same number.

Given a filling $\sigma$ of   $\dg(\alpha)$,
 the entries in each triple of squares
carry with them an orientation defined as follows.  
To find the orientation of a triple, if all the entries in the three squares are distinct, start at the smallest one and go in a circular manner to
the next smallest and then the largest.  If you went in a clockwise direction, it is a clockwise triple, otherwise it is a counterclockwise triple.  For triples with repeated entries, when comparing equal entries the one which occurs first in the \emph{reading order} is viewed as being smaller. 
(Here the {\it reading word} of the filling $\sigma$ is the sequence of numbers obtained by reading the entries of the filling across rows, left-to-right, starting with the top row and working downwards.  The \emph{reading order} of the entries is the order induced by the reading word; i.e. if an entry $a$ appears before an entry $b$ in the reading word, then $a$ is said to be \emph{smaller} or \emph{earlier} in reading order than $b$.)

We call a triple a \emph{coinversion triple} if it is either a type A clockwise triple or type B counterclockwise triple. (All fillings in this section will be nonattacking, and repeated entries in any triple in a nonattacking filing must be directly on top of each other.  This forces any triple with repeated entries in a nonattacking filling to be an inversion triple.)  For degenerate triples, if the rightmost entry is smaller than the leftmost it is an inversion triple, otherwise it is a coinversion triple.  Some of our diagrams will contain a {\it basement}, i.e. a row of $n$ squares below $\text{dg}(\alpha)$  filled with a permutation in $S_n$.   We  use the notation
 $\text{dg}^{\beta}(\alpha)$ to refer to 
$\text{dg}(\alpha)$ with basement $\beta$.   We make the convention 
that squares in the basement count as being in the arm of squares in 
$\text{dg}(\alpha)$, and so can be part of triples.  But we require 
that the top square(s) of a triple or degenerate triple
must be in $\text{dg}(\alpha )$ (above the basement).

One can also define coinversion triples without the concept of an orientation using inequalities as follows.   
\begin{itemize}
\item Given a type A triple  $(v,r)$, $(u,r)$, and $(u,r-1)$ for $u<v$, set $a=\sigma((v,r))$, $b=\sigma((u,r))$, and $c=\sigma((u,r-1))$.
\item Given a type B triple $(v,r-1)$, $(u,r)$, and $(u,r-1)$ for $u>v$. Set $a=\sigma((v,r-1))$, $b=\sigma((u,r))$, and $c=\sigma((u,r-1))$.
\end{itemize}
Then a type A or B triple is a coinversion triple 
if one of the following occurs:
\begin{align*}
a \leq c < b\ \ {\rm or}\ \ 
c < b \leq a \ \ {\rm or}\ \ 
b \leq a \leq c.
\end{align*}

\begin{theorem}[{\cite[Corollary A.11.1]{Hag08}}]
	The integral form Macdonald polynomial is given by 
	\begin{align} 
\label{Jmu3}
J_{\mu}(X;q,t) =
(1-t)^{\ell (\mu)} \sum_{ \text{nonattacking fillings $\sigma$ 
of $\dg(\mu)$ }} x^{\sigma}
q^{\text{maj}(\sigma)} 
t^{\coinv(\sigma)} \\
\notag
\times \prod _{\substack{s \in \dg(\mu) \\ \text{$s$ not in row $1$}\\ \sigma (s) = \sigma (\South(s))} }
(1- q^{\leg(s)+1}t^{\arm(s)+1})
\prod _{ \substack{s \in \dg(\mu) \\ \text{$s$ not in row $1$}\\ \sigma (s) \ne \sigma ( \South(s) ) } }(1-t),
\end{align}
	where $\ell (\mu)$ is the number of (nonzero) parts of $\mu$,
	and for a cell $s$ not in row $1$,  $\South(s)$ denotes the cell directly below $s$ in the same column as $s$.   Here the sum is over all nonattacking fillings of $\dg(\mu)$ (there is no basement in these fillings).   As usual the statistic $\maj$  is the sum of $\leg(s)+1$, over all  descents (i.e. squares $s$ above row $1$ with $\sigma (s)> \sigma(\South(s))$).
\end{theorem}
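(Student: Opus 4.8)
The statement is \cite[Corollary A.11.1]{Hag08}; the plan is to deduce it from the Haglund--Haiman--Loehr expansion of the modified Macdonald polynomial $\widetilde{H}_\mu(X;q,t)$ (our \cref{thm:HHL}) via the plethystic relation between the modified and integral forms. Recall that $\widetilde{H}_\mu[X;q,t] = t^{n(\mu)} J_\mu\!\left[\tfrac{X}{1-t^{-1}};q,t^{-1}\right]$ with $n(\mu)=\sum_i(i-1)\mu_i$, equivalently, after $t\mapsto t^{-1}$ and clearing the plethystic denominator, $J_\mu[X;q,t] = t^{n(\mu)}\,\widetilde{H}_\mu\bigl[(1-t)X;q,t^{-1}\bigr]$. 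So the first step is to take \cref{thm:HHL}, and evaluate the right-hand side at the virtual alphabet $(1-t)X = X - tX$.

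Second, I would read this evaluation off combinatorially by \emph{superizing} the HHL sum: replace $X=\{x_1,x_2,\dots\}$ by the ordered signed alphabet with positive letters $x_i$ and negative letters of weight $-tx_i$, extend $\maj$ and $\inv$ with the usual reading-order tie-break, a sign for each negative letter, and the string constraint forcing the negative letters in any given column into a single weakly increasing column-contiguous block. The key computational step is then a resummation: fix the \emph{positive skeleton} of such a filling (delete the negative letters, forget the bars), and sum the geometric series over all ways of reinserting blocks of negative letters within the columns. Since $h_n[1-t]=1-t$ for $n\ge 1$, each inserted block contributes a factor $1-t$; meanwhile inflating a column lengthens the arms of cells lying to its left and shifts the $\maj$-contributions of the descents it contains, and tracking these two effects collapses the per-column sum into precisely the $q^{\leg(s)+1}t^{\arm(s)+1}$-type factors appearing in \eqref{Jmu3}.

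Third, I would check that the positive skeletons surviving with nonzero coefficient are exactly the nonattacking fillings: a skeleton with two equal positive entries in attacking position is killed by a sign-reversing involution that toggles whether the entry occurring later in reading order is barred, and this is consistent with the column-string rule precisely because two attacking equal entries can never sit directly on top of one another. After the cancellation, a cell $s$ above row $1$ contributes $(1-q^{\leg(s)+1}t^{\arm(s)+1})$ if it lies in the interior of an inflated run, i.e.\ $\sigma(s)=\sigma(\South(s))$, and $(1-t)$ otherwise; the $\ell(\mu)$ cells of row $1$ account for $(1-t)^{\ell(\mu)}$; and the prefactor $t^{n(\mu)}$ together with the substitution $t\mapsto t^{-1}$ is absorbed by rewriting $\inv$ as $\coinv$ on $\dg(\mu)$, whose sum over a fixed shape is a constant. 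As a cross-check, the skeleton sum should coincide with the nonattacking-filling formula for $P_\mu$, and multiplying by $\text{PR}1(\mu)=\prod_{s\in\dg(\mu)}(1-q^{\leg(s)}t^{\arm(s)+1})$ as in \eqref{JDef} should clear denominators cell by cell to give exactly \eqref{Jmu3}.

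The main obstacle is the bookkeeping in the superization: one must pin down the total order on the signed alphabet and the sign and string conventions so that (i) the sign-reversing involution cancels every attacking filling and nothing else, and (ii) the geometric resummation over negative strings yields precisely the exponents $\leg(s)+1$ and $\arm(s)+1$ rather than an off-by-one variant. This is the technical heart of \cite[Appendix A]{Hag08}; the $qt$-symmetry of $J_\mu$ and its specialization to $s_\mu$ at $q=t=0$ serve as useful consistency checks. A plethysm-free alternative would be to symmetrize the HHL combinatorial formula for the nonsymmetric Macdonald polynomials $E_{w\mu}$ and then clear $\text{PR}1(\mu)$, trading the signed-alphabet combinatorics for the action of the Hecke/Demazure operators on nonattacking fillings.
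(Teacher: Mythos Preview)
The paper does not prove this theorem at all: it is quoted verbatim as \cite[Corollary A.11.1]{Hag08} and used as a black box in \cref{sec:Jcompact}. There is therefore no ``paper's own proof'' to compare against. Your outline is essentially the argument Haglund gives in \cite[Appendix A]{Hag08}: start from the HHL formula for $\widetilde{H}_\mu$, apply the plethystic substitution $X\mapsto X(1-t)$ via superization, cancel attacking fillings with a sign-reversing involution, and resum the barred-letter contributions column by column to produce the products in \eqref{Jmu3}. As a sketch this is correct in spirit.

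That said, two points deserve care if you actually carry it out. First, the bookkeeping you flag is genuine: in \cite{Hag08} the superization is done on the $\dg(\mu')$ formula with the roles of $q$ and $t$ swapped relative to \cref{thm:HHL} here, and the ``string'' that collapses is a \emph{row} of equal entries, not a column; the involution toggles the bar on the reading-order-last of a pair of equal attacking entries, and one must check this is compatible with the row-string rule. Second, your description of the resummation is slightly off: one does not ``inflate'' columns and track how arms change. The shape $\dg(\mu)$ is fixed throughout; what happens is that for each cell $s$ above row $1$ one sums over whether $s$ is barred, and the contribution of barring $s$ to $\maj$ and $\coinv$ depends on whether $\sigma(s)=\sigma(\South(s))$, yielding $(1-q^{\leg(s)+1}t^{\arm(s)+1})$ in the equal case and $(1-t)$ otherwise. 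Your alternative route via symmetrizing the $E_\alpha$ formula is also viable and is in fact closer to how \cite{HHL04} originally packaged the result.
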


In \cite{HHL04} the authors note that the right-hand-side of \eqref{Jmu3} actually yields a correct formula for $J_{\mu}$ if we replace $\mu$ everywhere by $\alpha$, where $\alpha$ is any weak composition of $n$ into $n$ parts satisfying $\dec(\alpha) = \mu$.   In fact, the most efficient formula seems to be when 
one uses $\alpha = \text{inc}(\mu)$, in which case the formula is closely related to a formula of Lenart for $P_{\mu}(X;q,t)$ \cite{Lenart} (which he proved under the additional assumption that $\mu$ has distinct parts).

One unpleasant feature of all these formulas for $J_{\mu}$ is that for the special case $\mu=1^n$, 
\begin{equation}
\label{simple}
J_{1^n}(X;q,t)=x_1x_2\cdots x_n (1-t)(1-t^2)\cdots (1-t^n),
\end{equation}
while the formula (\ref{Jmu3}) reduces to
\begin{equation*}
 x_1x_2\cdots x_n (1-t)^n \sum_{\sigma \in S_n}t^{\coinv(\sigma)},
\end{equation*}
a sum of $n!$ terms.  

In this section, we 
will obtain a compact formula for $J_{\mu}$, see
\cref{cor:J},
 which 
in the case $\mu=1^n$, has only one term---identity \eqref{simple}.  
To prove \cref{cor:J},
we will use
a recent result of the first, third, and fifth authors \cite[Theorem 1.10 and Proposition 4.1]{CMW18},
namely 
\begin{align}
\label{newPmu}
P_{\mu}(X;q,t) =
\sum_{\alpha:\ \dec(\alpha)= \mu} E_{ \inc(\alpha) }^{ \beta (\alpha) }( X;q,t),
\end{align}
where $\beta= \beta (\alpha)$ was defined in \cref{def:weak}.
In \eqref{newPmu}, the quantity
$E_{ \inc(\alpha) }^{ \beta} ( X;q,t)$ is a \emph{permuted-basement nonsymmetric Macdonald polynomial} \cite{thesis, Al16},  described combinatorially in \eqref{formula} below.  We mention that 
 in the case 
 $\gamma = (n,n-1,\cdots ,1)$, the permuted-basement nonsymmetric Macdonald polynomial $E^{\gamma} _{(\alpha_n,\ldots ,\alpha _1)}(X;q,t)$ equals the original nonsymmetric Macdonald polynomial
$E_{\alpha}(X;q,t)$ \cite{Mac88}, while the special case $\gamma = (1, 2, \cdots ,n)$ yields 
 a family of polynomials $E^{\gamma}_{\alpha}(X;q,t)$ studied by Marshall \cite{Mar99}.

\begin{definition}   
Let $\alpha = (\alpha _1, \ldots ,\alpha _n)$ be a weak composition.  We say a nonattacking filling $\sigma$ of $\dg(\inc(\alpha))$ (with or without a basement) is \emph{ordered} if in the bottom row of the diagram of $\inc(\alpha)$, entries of $\sigma$ below columns of the same height are strictly decreasing when read left to right.   
\end{definition}
Figure \ref{nonatt3b} shows an ordered, nonattacking filling of the diagram of $\alpha=(0,0,1,2,2,2,3)$.  The $7$ coinversion triples for the filling are 
$(1,6,7)$, $(3,6,7)$, $(5,6,7)$, $(4,6,7)$, $(3,1,2)$, $(4,1,2)$, and $(5,7,3)$. 
\begin{figure}
\begin{center}
\begin{tikzpicture}[scale=.5]
\cell043
\cell147 \cell132 \cell121 \cell 115
\cell246 \cell231 \cell223 \cell215 \cell204
\end{tikzpicture}

\caption{An ordered nonattacking filling 
	of $\dg((0,0,1,2,2,2,3))$ with $\maj=3$ and $\coinv= 7$.}
\label{nonatt3b}
\end{center}
\end{figure}

\begin{proposition} 
\label{13}
For $\alpha$ a weak composition,  define
\begin{equation*}
\label{EQ}
\text{PR}2(\alpha) =  \prod_{i\ge 1} (t;t)_{m_i}
\prod_{\substack{s \in \dg(\inc(\alpha)) \\ \text{$s$ not in the bottom row} }  } (1-q^{\leg(s)+1} t^{\arm(s)+1}),
\end{equation*}
where for $i\ge 1$, $m_i$ is the number of times $i$ occurs in $\alpha$, and $(t;t)_k = (1-t)(1-t^2)\cdots (1-t^k)$. 
Then if $\mu$ is any partition, $\text{PR}1(\mu) = \text{PR}2(\inc(\mu))$.
\end{proposition}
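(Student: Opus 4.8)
The plan is to show that the two products $\text{PR}1(\mu) = \prod_{s \in \dg(\mu)} (1 - q^{\leg(s)} t^{\arm(s)+1})$ and $\text{PR}2(\inc(\mu))$ agree by exhibiting an explicit bijection between the cells involved and matching the exponents of $q$ and $t$ factor by factor. First I would unwind the definition: $\text{PR}2(\inc(\mu))$ is a product of two pieces, the ``vertical'' piece $\prod_{i \ge 1}(t;t)_{m_i}$ (where $m_i$ counts the parts of $\mu$ equal to $i$) and the ``triple'' piece $\prod_{s}(1 - q^{\leg(s)+1}t^{\arm(s)+1})$ over cells $s$ of $\dg(\inc(\mu))$ not in the bottom row. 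The key observation is that each factor $(t;t)_{m_i} = \prod_{k=1}^{m_i}(1-t^k)$ should account precisely for those cells $s \in \dg(\mu)$ with $\leg(s)=0$, i.e. the topmost cell of each column of $\dg(\mu)$; there are as many columns of $\dg(\mu)$ of each height as there are parts of that size, and the arm values of these top cells, as the cell ranges over a group of equal-height columns, should run through $0, 1, \ldots$ up to the relevant bound, giving exponents $t^{\arm(s)+1} = t^1, t^2, \ldots$. Meanwhile the factors of $\text{PR}1(\mu)$ with $\leg(s) \ge 1$ must be matched with the ``triple'' piece of $\text{PR}2$, where $\leg_{\inc(\mu)}(s)+1 = \leg_\mu(s')$ and $\arm_{\inc(\mu)}(s) = \arm_\mu(s')$ under an appropriate cell correspondence.

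The core step is therefore a careful comparison of the arm and leg statistics in the two diagrams. Note that $\dg(\mu)$ (as defined in this paper) is the Ferrers graph of $\mu'$, while $\dg(\inc(\mu))$ has columns of heights $\mu$ sorted into increasing order. Since the columns of $\dg(\inc(\mu))$ are just a permutation of the columns of $\dg(\mu)$, and the leg of a cell depends only on its own column (the cells above it in the same column), the leg statistic is preserved column-by-column up to reindexing. The arm statistic in this paper's convention depends on cells in the same row to the right (in columns not taller than the current one) together with cells in the row below to the left (in columns shorter than the current one); here the sorting into increasing order is exactly what makes the bookkeeping clean, because ``columns to the right'' of a given column in $\dg(\inc(\mu))$ are precisely the columns of height $\ge$ the current height. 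I would verify that for a cell $s$ in $\dg(\inc(\mu))$ above the bottom row, $\leg(s)+1$ and $\arm(s)+1$ equal respectively $\leg(s')$ and $\arm(s')+1$ for the corresponding cell $s'$ of $\dg(\mu)$ (the cell in the same relative position, but shifted up one row because the bottom row of $\dg(\inc(\mu))$ has been stripped off), and separately that the bottom-row cells of $\dg(\inc(\mu))$, which are exactly the cells of $\dg(\mu)$ with $\leg = 0$, contribute arm values $0, 1, 2, \ldots$ within each block of equal-height columns, matching $(t;t)_{m_i}$.

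The main obstacle I anticipate is getting the arm-statistic accounting exactly right under this paper's somewhat nonstandard definition of ``arm'' (which mixes the current row and the row below), especially at the boundary between columns of different heights and in handling the ``$t^{\arm(s)+1}$'' shift versus the plain ``$t^{\arm(s)+1}$'' that already appears. A clean way around this is to observe that $\text{PR}1(\mu)$ is a known, coordinate-free quantity, and to invoke the already-cited references \cite{Hag08}, \cite{HHL04} for the identity $\text{PR}1(\mu) = \prod_{s \in \dg(\mu)}(1-q^{\leg(s)}t^{\arm(s)+1})$ in whatever convention is needed, then just check that the arm and leg of the sorted composition $\inc(\mu)$ reproduce those of $\mu'$ (resp.\ $\mu$) after the single-row shift. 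An even more economical route, if available, is to note that $\text{PR}2(\inc(\mu))$ is precisely the ``$\text{PR}$'' factor appearing in the permuted-basement formula \eqref{Jmu3}/\eqref{formula} when specialized to $\alpha = \inc(\mu)$, so that equality of $\text{PR}1(\mu)$ and $\text{PR}2(\inc(\mu))$ follows by comparing \eqref{JDef} with that formula and using that both compute $J_\mu$; but since \cref{13} is used precisely to establish $J_\mu = P_\mu \cdot \text{PR}2(\inc(\mu))$ downstream, I would give the direct combinatorial argument above to avoid circularity.
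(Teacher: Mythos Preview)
Your proposal is correct and follows essentially the same approach as the paper's own proof. The paper likewise splits $\text{PR}1(\mu)$ into the cells of $\dg(\mu)$ at the top of their columns (leg $=0$), which contribute exactly $\prod_i (t;t)_{m_i}$, and the remaining cells, which are put in bijection with the cells of $\dg(\inc(\mu))$ above row $1$ so that $\leg$ drops by one and $\arm$ is preserved; the paper simply calls this an ``easy exercise'' and illustrates it for $\mu=(6,6,6,6,3,3)$ rather than writing out the verification you sketch.
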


\begin{proof}   This an easy exercise, which we illustrate for the partition $\mu = (6,6,6,6,3,3)$. Consider Figure \ref{armsAB}, which lists the arm lengths for the two diagrams $\dg(\inc(\mu))$ and $\dg(\mu)$.  If we ignore squares in the bottom row of the diagram $\dg(\inc(\mu))$ and at the top of their columns in the diagram $\dg(\mu)$, there is an obvious bijection between the factors $(1-q^{ \leg } t^{ \text{ arm}+1})$ and $(1-q^{\leg+1}t^{\text{ arm}+1})$ occurring in the definition of $\text{PR}1(\mu)$ and $\text{PR}2(\text{inc}(\mu))$, respectively.  For the factors in $\text{PR}1(\mu)$ corresponding to squares at the top of their columns in $\dg(\mu)$, all the leg lengths are $0$, and we get the remaining factor $ \prod_{i\ge 1} (t;t)_{m_i}$ occurring in $\text{PR}2(\text{inc}(\mu))$.
\end{proof}

\begin{figure}
\begin{center}
\includegraphics[scale=.4]{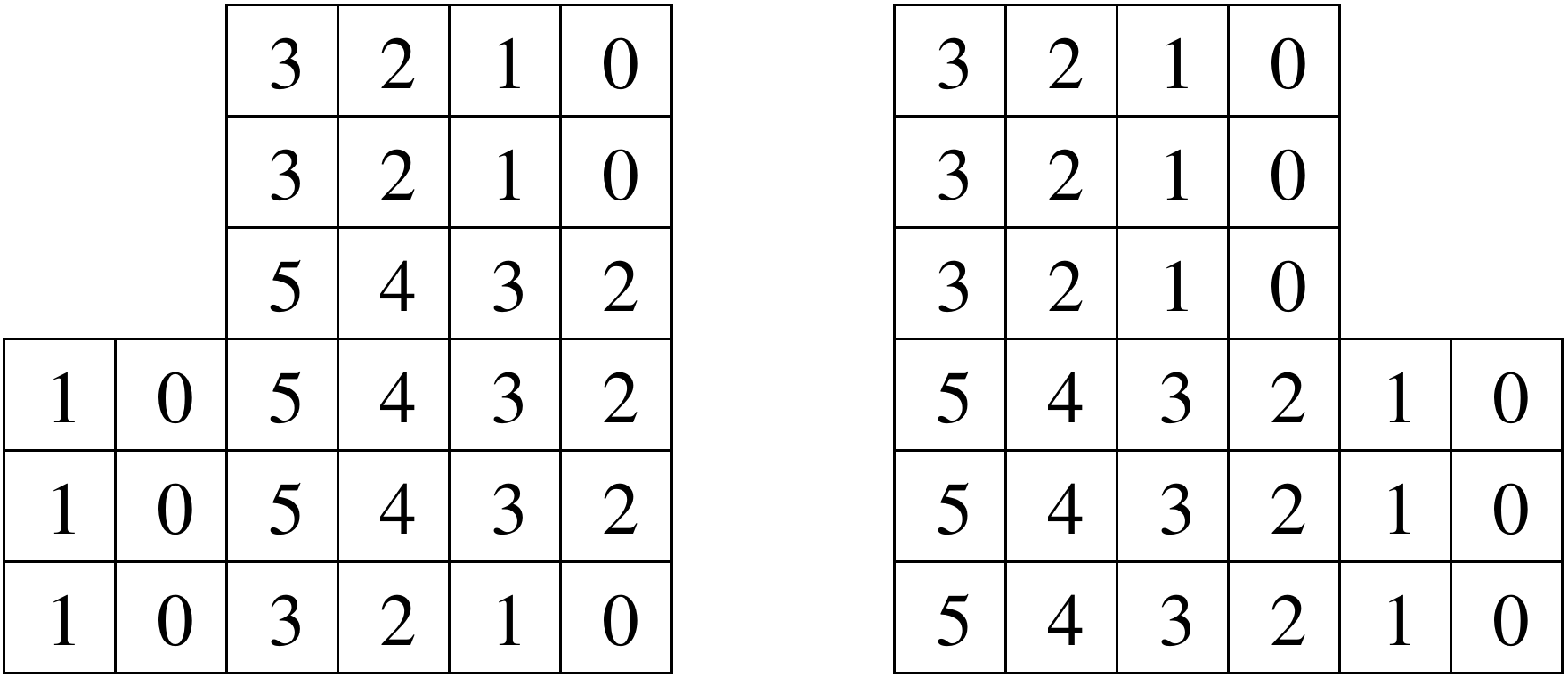}
\caption{The arm lengths for the diagrams $\dg(\inc((6,6,6,6,3,3)))$ and
$\dg((6,6,6,6,3,3))$.}
\label{armsAB}
\end{center}
\end{figure}

Recall the definition of $\beta(\alpha)$ from \cref{def:weak}.
\begin{definition}
Given a weak composition $\alpha = (\alpha_1,\dots,\alpha_n)$,
we define the \emph{integral form} version 
of $E_{\inc(\alpha)} ^{\beta (\alpha)}(X;q,t)$ as
\begin{align}
\label{integralE}
\mathcal {E}_{\inc(\alpha)} ^{\beta (\alpha)}(X;q,t) = PR2(\inc(\alpha))E_{\inc(\alpha)} ^{\beta (\alpha)}(X;q,t).
\end{align}
\end{definition}

Using the definition of 
 permuted basement nonsymmetric Macdonald polynomials 
\cite[Definition 5]{Al16},
we have:
\begin{align}
\label{formula}
E_{\inc(\alpha)}^{\beta (\alpha)}(X;q,t) = \sum_{\sigma} x^{\sigma} \wt(\sigma),
\end{align}
where the sum is over all nonattacking fillings $\sigma$ of $\dg ^{\beta (\alpha)} (\inc(\alpha))$ which use the letters $\{1,2,\dots,n\}$,  
and
\begin{align}
\label{weight}
\wt(\sigma) = q^{ \text{maj} (\sigma)} t^{ \coinv(\sigma) }\prod_
	{s:\ \sigma(s) \ne \sigma(\South(s))} 
	\frac {1-t}{1-q^{\text{leg(s)}+1}t^{\text{arm(s)}+1}}.
\end{align}

\begin{remark}
\label{non}
	For general diagrams, the product in \eqref{weight} normally includes cells $s$ in row $1$, but since the parts of  $\inc(\alpha)$ are by definition in nondecreasing order, the nonattacking condition forces the entries in row $1$ to be the same as those directly below them in the basement.  (Using 
	the definition of $\beta(\alpha)$, this will mean moreover
	that a nonattacking filling must be ordered.)  Hence for diagrams corresponding to partitions, %
we can restrict the product in \eqref{weight} to cells $s$ above row $1$. Note also that if $\sigma (s) = \sigma (\South(s)))$, then $s$ and $\South(s)$ are never part of a coinversion triple with any third cell in the arm of $s$, so such a pair will not contribute to $\coinv$ either. 
\end{remark}

It follows from (\ref{formula}), (\ref{weight}) and Remark \ref{non} that $\mathcal {E}_{\inc(\alpha)} ^{\beta (\alpha)}(X;q,t)$ 
is $\prod_{i} (t;t)_{m_i}$ times an element of $\mathbb Z [x_1,\ldots ,x_n,q,t]$.   This follows since squares $s$ in row $1$ do not contribute to the product in $\text{wt}(\sigma)$, while if any square $s$ above row $1$ satisfies $\sigma (s) \not= \sigma (\South(s))$, then the associated  factor  
$(1-q^{\leg(s)+1}t^{\arm(s)+1})$ in the denominator of 
\cref{weight}
will get cancelled by a factor in 
$\text{PR}2(\alpha)$.
In fact this argument shows that
\begin{align}
\label{IFE}
\mathcal {E}_{\inc(\alpha)} ^{\beta (\alpha)}(X;q,t) = 
\prod_{i}(t;t)_{m_i}  
\sum_{\text{ordered, nonattacking fillings $\sigma$ 
of $\dg ^{\beta (\alpha)} (\inc(\alpha))$ } } 
x^{\sigma}q^{\maj(\sigma) } t^{\coinv(\sigma )} \\
\notag
\times \prod _{\substack{s \in \dg(\inc(\alpha)),\ \text{$s$ not in row $1$} \\ 
\sigma (s) = \sigma (\South(s))} }
(1- q^{\leg(s)+1}t^{\arm(s)+1})
\prod _{ \substack{s \in \dg(\inc(\alpha)),\ \text{$s$ not in row $1$}\\ \sigma (s) \ne \sigma ( \South(s) ) } }(1-t),
\end{align}

\begin{corollary} \label{cor:J}
	The formula for $J_{\mu}$ has the following more compact version:
\begin{align} 
\label{Jmu4}
J_{\mu}(X;q,t) =
\prod_{i}(t;t)_{m_i}  
\sum_{ \text{ ordered, nonattacking fillings $\sigma$ of $\dg(\inc(\mu))$ } } 
x^{\sigma}q^{\maj(\sigma)} t^{\coinv(\sigma)} \\
\notag
\times \prod _{\substack{s \in \dg(\inc(\mu)),\ \text{$s$ not in row $1$}\\ \sigma (s) = \sigma (\South(s))} }
(1- q^{\leg(s)+1}t^{\arm(s)+1})
\prod _{ \substack{s \in \dg(\inc(\mu)),\ \text{$s$ not in row $1$}\\ \sigma (s) \ne \sigma ( \South(s) ) } }(1-t).
\end{align}
\end{corollary}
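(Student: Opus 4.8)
The plan is to chain together the ingredients assembled above; throughout I regard $\mu$ as padded to a fixed length $n$, so that the weak compositions $\alpha$ with $\dec(\alpha)=\mu$ are the rearrangements of $\mu$ and all fillings use the letters $\{1,\dots,n\}$. First, combining the definition \eqref{JDef} of $J_\mu$ with the expansion \eqref{newPmu} of $P_\mu$ gives
\[
J_\mu(X;q,t)=\text{PR}1(\mu)\sum_{\alpha:\ \dec(\alpha)=\mu}E_{\inc(\alpha)}^{\beta(\alpha)}(X;q,t).
\]
Each such $\alpha$ has the same multiset of parts as $\mu$, so $\inc(\alpha)=\inc(\mu)$ and $m_i(\alpha)=m_i(\mu)$ for all $i$; hence \cref{13} gives $\text{PR}1(\mu)=\text{PR}2(\inc(\alpha))$ for every such $\alpha$. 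Moving $\text{PR}1(\mu)$ inside the sum and invoking the definition \eqref{integralE} of the integral-form permuted-basement polynomial, I would obtain
\[
J_\mu(X;q,t)=\sum_{\alpha:\ \dec(\alpha)=\mu}\mathcal{E}_{\inc(\alpha)}^{\beta(\alpha)}(X;q,t).
\]

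Next I would substitute the combinatorial formula \eqref{IFE} for each $\mathcal{E}_{\inc(\alpha)}^{\beta(\alpha)}$. Since $\dg^{\beta(\alpha)}(\inc(\alpha))=\dg^{\beta(\alpha)}(\inc(\mu))$ and the prefactor $\prod_i(t;t)_{m_i}$ is the same for all these $\alpha$, this writes $J_\mu$ as $\prod_i(t;t)_{m_i}$ times a double sum---over all $\alpha$ with $\dec(\alpha)=\mu$ and over all ordered nonattacking fillings $\sigma$ of $\dg^{\beta(\alpha)}(\inc(\mu))$---whose summand is exactly the one displayed in \eqref{Jmu4}, namely $x^\sigma q^{\maj(\sigma)}t^{\coinv(\sigma)}$ times the two products over cells $s$ not in row $1$. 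Comparing with \eqref{Jmu4}, it then suffices to prove that ``delete the basement'' is a statistic-preserving bijection from $\bigsqcup_{\alpha:\ \dec(\alpha)=\mu}\{\text{ordered nonattacking fillings of }\dg^{\beta(\alpha)}(\inc(\mu))\}$ onto the set of ordered nonattacking fillings of $\dg(\inc(\mu))$.

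In the forward direction, \cref{non} already tells us that the nonattacking condition on a diagram with weakly increasing column heights forces every row-$1$ entry of $\sigma$ to equal the basement entry directly below it, and forces $\sigma$ to be ordered; so erasing the basement produces an ordered nonattacking filling $\tau$ of $\dg(\inc(\mu))$. The monomial $x^\sigma$, the statistic $\maj(\sigma)$, and the two products over cells not in row $1$ all depend only on the cells of $\dg(\inc(\mu))$, hence are unchanged. The only triples that disappear are those whose lower two cells are a cell $(u,1)$ and the basement cell $(u,0)$; since $\sigma(u,1)=\sigma(u,0)$, such a triple contains a repeated entry and is therefore an inversion triple, contributing nothing to $\coinv(\sigma)$, so $\coinv$ is preserved too. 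For the reverse direction, given an ordered nonattacking filling $\tau$ of $\dg(\inc(\mu))$ I would reconstruct $\alpha$ by setting $\alpha_{\tau(i,1)}$ equal to the height of column $i$ for each nonzero column $i$, and $\alpha_p=0$ otherwise; since the entries $\tau(i,1)$ are distinct and the column heights of $\dg(\inc(\mu))$ are the parts of $\mu$ together with some zeros, this $\alpha$ is well defined with $\dec(\alpha)=\mu$. The ``ordered'' requirement on $\tau$---strictly decreasing entries below columns of equal height---is precisely the maximal-length condition from \cref{def:weak} that picks out $\beta(\alpha)$ among all permutations sorting $\alpha$, so $\beta(\alpha)$ agrees with $i\mapsto\tau(i,1)$ on the nonzero columns; appending the basement $\beta(\alpha)$ to $\tau$ then recovers the unique ordered nonattacking filling of $\dg^{\beta(\alpha)}(\inc(\mu))$ that restricts to $\tau$, and this is inverse to basement-deletion. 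Assembling these steps yields \eqref{Jmu4}.

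I expect the main obstacle to be the reverse direction of this bijection: showing that every ordered nonattacking filling of $\dg(\inc(\mu))$ comes from exactly one $\alpha$. This rests on two points that must be pinned down carefully. First, one needs that the basement of a nonattacking filling of $\dg^{\beta(\alpha)}(\inc(\mu))$ is completely determined by, and imposes no further constraint on, the part of the filling above it---so that discarding it loses nothing---which is where \cref{non} together with the nonattacking conditions at the bottom row do the work, including for the columns of height $0$. Second, one must check that the combinatorial ``ordered'' condition really coincides with the algebraic maximal-length characterization of $\beta(\alpha)$, with correct bookkeeping for those zero columns. A secondary technicality is the $\coinv$ accounting when the basement is removed: one should confirm that no coinversion triple---degenerate or not---is created or destroyed, using that every triple meeting the basement has a repeated entry and so is an inversion triple.
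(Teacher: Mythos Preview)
Your proposal is correct and follows essentially the same route as the paper's proof: multiply \eqref{newPmu} by $\text{PR}1(\mu)=\text{PR}2(\inc(\mu))$, identify each summand with $\mathcal{E}_{\inc(\alpha)}^{\beta(\alpha)}$, substitute \eqref{IFE}, and then match the resulting double sum with the single sum in \eqref{Jmu4} via the ``delete the basement'' bijection. The paper compresses this last step into the phrase ``equals the portion of \eqref{Jmu4} which has bottom row determined by $\beta(\alpha)$,'' whereas you spell out both directions of the bijection and the statistic preservation; this is a welcome elaboration rather than a different argument.

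One small sharpening to your $\coinv$ bookkeeping: you correctly observe that every triple \emph{destroyed} when the basement is erased has a repeated entry (since $\sigma(u,1)=\sigma(u,0)$) and hence is an inversion triple. But you should also note that degenerate triples in row~$1$ are \emph{created} when the basement disappears, and these are not handled by the repeated-entry argument. The reason they contribute nothing to $\coinv$ is the \emph{ordered} hypothesis: for $u<v$ with columns $u,v$ of equal height in $\dg(\inc(\mu))$, ordered means $\sigma(u,1)>\sigma(v,1)$, which is exactly the inversion condition for a degenerate pair. With that remark added, your check that $\coinv$ is preserved is complete.
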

Note that \eqref{Jmu4} implies the (as far as we know) new fact that $J_{\mu}(X;q,t)$ is $\prod_{i}(t;t)_{m_i}$ times an element of
$\mathbb Z [x_1,\ldots, x_n,q,t]$.
\begin{proof}
Start by multiplying both sides of \eqref{newPmu} by $\text{PR}2(\inc(\mu))$.  The left-hand-side then becomes $J_{\mu}(X;q,t)$ by \eqref{JDef} and \cref{13}.   
The summand on the right hand side becomes $\mathcal {E}_{\inc(\alpha)} ^{\beta (\alpha)}(X;q,t)$, which by \eqref{IFE} equals the portion of \eqref{Jmu4} which has bottom row determined by $\beta(\alpha)$.
\end{proof}
By \eqref{JDef} and Proposition \ref{13}, an equivalent form of (\ref{Jmu4}) is the identity
\begin{align} 
\label{Pmu4}
P_{\mu}(X;q,t) =
\sum_{ \text{ ordered, nonattacking fillings $\sigma$ 
of $\dg(\inc(\mu))$ } } 
x^{\sigma}q^{\maj(\sigma)} t^{\coinv(\sigma)} \\
\notag
\prod _{ \substack{s \in \dg(\inc(\mu)),\ \text{$s$ not in row $1$}\\ \sigma (s) \ne \sigma ( \South(s) ) } } \frac {(1-t)}{(1- q^{\leg(s)+1}t^{\arm(s)+1})}.
\end{align}
\begin{remark}
\label{Pmuformula}
The HHL formulas for $E_{\alpha}$, $\mathcal{E}_{\alpha}$ and $J_{\mu}$ never implied a particularly nice corresponding formula for $P_{\mu}$.  For on the one hand, if you divide formula 
(\ref{Jmu3}) by $\text{PR}1(\mu)$, then by (\ref{JDef}) you get an expression for $P_{\mu}$, but the terms in $\text{PR}1(\mu)$ don't cancel the 
$(1- q^{\leg(s)+1} t^{\arm(s)+1})$ factors in (\ref{JDef}) nicely.  On the other hand, if you start by writing $P_{\mu}$ as a linear combination of
Macdonald's original $E_{\alpha}$, the coefficients are ratios of products, each with $n$ factors (see \cite[Proposition 5.3.1]{HHL04}), and the resulting formula is again not particularly nice.
By starting with (\ref{newPmu}) and using the combinatorial formula for the $E_{\text{inc}(\alpha)}^{\beta (\alpha)}$ from \cite{Al16}, we avoid these complications, and arrive at the nice formula in \eqref{Pmu4} for $P_{\mu}$.
\end{remark}

\section{A quasisymmetric Macdonald polynomial}\label{sec:qsym}

Recall that the ring of \emph{quasisymmetric functions} is a graded ring 
which  contains within it 
the ring of symmetric functions.  
The ring of quasisymmetric functions
 has multiple
distinguished bases, indexed by (strong) compositions, 
one of which is the \emph{quasisymmetric Schur
functions} 
$\text{QS}_{\gamma}(X)$ 
introduced by the second and fourth authors, together with Luoto and van Willigenburg 
\cite{HLMV09}.  (See~\cite{Mas19} for an exposition on quasisymmetric functions and some recent developments in that area of research.)  The authors showed that $\text{QS}_{\gamma}(X)$ is quasisymmetric,
and that each (symmetric) Schur function $s_{\lambda}(X)$ 
is a positive sum of quasisymmetric 
Schur functions.  In light of this, and the fact that Macdonald 
polynomials expand positively in terms of Schur polynomials, it is 
natural to ask if there is a notion of a 
\emph{Macdonald quasisymmetric polynomial} $G_{\gamma}(X; q, t)$ such that:
\begin{enumerate}
	\item[(A)]\label{a} The symmetric Macdonald polynomial $P_{\lambda}(X; q, t)$ is a positive
	     sum of Macdonald quasisymmetric polynomials; 
     \item[(B)]\label{b} $G_{\gamma}(X; q, t)$ is quasisymmetric; 
     \item[(C)]\label{c} $G_{\gamma}(X; 0, 0)$ is the quasisymmetric Schur function 
		$\text{QS}_{\gamma}(X)$;
 \item[(D)] \label{d} $G_{\gamma}(X; q, t)$ has a combinatorial formula
along the lines of the Haglund-Haiman-Loehr (HHL) formula for the $E_{\alpha}$ \cite{HHL08}, or its compact version from \cite{CMW18}.
\end{enumerate}
We show in this section that the answer to this question is yes.

Given a permutation $\sigma \in S_n$,  let $E_{\alpha}^{\sigma}(X;q,t)$ be the
 permuted-basement nonsymmetric Macdonald polynomial defined in \cite{thesis} and studied in  \cite{Al16,CMW18}, and let
 $F_{\alpha}(X;q,t) = E_{\inc(\alpha)}^{\beta (\alpha)}(X;q,t)$.  
For any partition $\lambda$ of $n$, from \cite{CMW18} we have that 
 \begin{align}
 \label{basic}
 P_{\lambda}(X;q,t)  = \sum_{\alpha :\ \dec(\alpha)= \lambda} F_{\alpha}(X;q,t),
 \end{align}
where 
the sum is over all weak compositions $\alpha$ whose positive parts are a 
rearrangement of the parts of $\lambda$. 

Note that if $\text{id}=(1,2,\ldots ,n)$ and  $w_0=(n,n-1,\ldots,1)$ are the identity permutation and permutation of maximal length in $S_n$,
respectively, then
$E_{\alpha}^{ \text{id} }(X;0,0)$ is the Demazure atom (introduced in~\cite{LasSch90} and developed in~\cite{Mas09}) and $E_{\alpha}^{ w_0 }(X;0,0)$ the Demazure character (see~\cite{ReiShi95}).  In the common notation for Demazure characters, 
i.e., key polynomials, one reverses the vector $\alpha$, i.e.,  the key polynomial corresponding to $\alpha$ would be $E_{(\alpha _n, \ldots ,\alpha _1)}^{w_0}(X;0,0)$.

Motivated by \eqref{basic}, we have the following definition and theorem.

\begin{theorem}
	We define the \emph{quasisymmetric Macdonald polynomial}
	$G_{\gamma}(X;q,t)$  to be
\begin{align}
\label{Gdef}
G_{\gamma}(X;q,t) &= \sum_{\alpha:\ \alpha ^{+}= \gamma}  F_{\alpha}(X;q,t) \\
&=  \sum_{\alpha:\ \alpha ^{+}= \gamma}  E_{\inc(\alpha)}^{\beta (\alpha )}(X;q,t),\nonumber
\end{align}
where the sum is over all weak compositions $\alpha$ for which $\alpha^{+}=\gamma$.
	Then $G_{\gamma}(X; q, t)$ satisfies properties 
	\rm{(A), (B), (C), and (D)}.
Moreover, we have that 
	\begin{equation} \label{tatom}
	F_{\alpha}(X; 0, t) = E_{\alpha}^{\text{id}}(X; 0, t),
\end{equation}
	where $E_{\alpha}^{\text{id}}(X; 0, t)$ is the Demazure $t$-atom~\cite{Al16}.
\end{theorem}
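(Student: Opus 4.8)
The plan is to dispatch (A) and (D) straight from the definition \eqref{Gdef}, to obtain \eqref{tatom} (and hence (C)) by specializing $q=0$ in the combinatorial formulas, and to treat (B) --- the one substantive point --- by a content-shifting bijection on fillings. Properties (A) and (D) require no real work. For (A), begin with \eqref{basic} and group the weak compositions $\alpha$ satisfying $\dec(\alpha)=\lambda$ according to the value of $\alpha^{+}$: since $\dec(\alpha)=\lambda$ holds precisely when $\alpha^{+}$ is a rearrangement of the parts of $\lambda$, collecting terms gives $P_{\lambda}(X;q,t)=\sum_{\gamma}G_{\gamma}(X;q,t)$, the sum over the finitely many strong compositions $\gamma$ that rearrange to $\lambda$, each with coefficient $1$ --- the desired positive expansion. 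For (D), substitute the Haglund--Haiman--Loehr formula \eqref{formula}--\eqref{weight}, or the compact version of \cite{CMW18}, into each summand of \eqref{Gdef}: this presents $G_{\gamma}$ as an explicit sum over nonattacking fillings, weighted by $q^{\maj}t^{\coinv}$ times a product over cells, i.e.\ a formula of exactly the advertised HHL type.

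Next I would prove \eqref{tatom} and deduce (C). For \eqref{tatom}, specialize $q=0$ in the combinatorial formula \eqref{formula}--\eqref{weight} for both $E_{\inc(\alpha)}^{\beta(\alpha)}$ and $E_{\alpha}^{\text{id}}$: then $q^{\maj(\sigma)}$ annihilates every nonattacking filling with a descent and each cell-product factor collapses to $1-t$, so both sides reduce to a sum over descent-free nonattacking fillings, each weighted by $t^{\coinv(\sigma)}$ times a power of $1-t$. The organizing idea is the column permutation $\beta(\alpha)$, which carries $\dg^{\text{id}}(\alpha)$ to $\dg^{\beta(\alpha)}(\inc(\alpha))$, preserves $x^{\sigma}$, preserves $\maj$ (the ``directly below'' relation is intra-column, as are leg lengths), and preserves the cell set $\{s:\sigma(s)\ne\sigma(\South(s))\}$; the content of the argument is to correct this into a bijection that also respects the nonattacking condition --- which is \emph{not} invariant under arbitrary column permutations, so the descent-free restriction must be doing essential work --- and preserves $\coinv$. (If this combinatorial route becomes awkward, an alternative is to argue from the recursive eigenoperator definition of permuted-basement Macdonald polynomials, where putting $q=0$ tames the basement-shuffling operators enough to identify the two families.) Granting \eqref{tatom}, set $t=0$ as well: $F_{\alpha}(X;0,0)=E_{\alpha}^{\text{id}}(X;0,0)$ is the Demazure atom, so $G_{\gamma}(X;0,0)=\sum_{\alpha:\alpha^{+}=\gamma}E_{\alpha}^{\text{id}}(X;0,0)$ equals $\text{QS}_{\gamma}(X)$ by the expansion of a quasisymmetric Schur function as the sum of the Demazure atoms indexed by the weak compositions $\alpha$ with $\alpha^{+}=\gamma$, proved in \cite{HLMV09}; this is (C).

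The hard part will be (B), quasisymmetry of $G_{\gamma}(X;q,t)$. Working from the formula of (D), I would fix a composition $(a_{1},\dots,a_{k})$ with all parts positive and show that the coefficient of $x_{i_{1}}^{a_{1}}\cdots x_{i_{k}}^{a_{k}}$ in $G_{\gamma}$ does not depend on the choice of indices $i_{1}<\dots<i_{k}$. The structural point is that in $\sum_{\alpha:\alpha^{+}=\gamma}E_{\inc(\alpha)}^{\beta(\alpha)}$ the diagrams $\dg(\inc(\alpha))$ and the basements $\beta(\alpha)$ differ only in the placement of the height-$0$ columns, so the sum ``integrates out'' those positions; this slack is exactly what lets one relabel the entries of a nonattacking filling by an order-preserving substitution on the value set while simultaneously sliding $\alpha$ within its rearrangement class, producing a weight-preserving bijection --- fixing both $q^{\maj}t^{\coinv}$ and the cell-product --- between the fillings contributing to the two monomials. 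This adapts the $q=t=0$ argument of \cite{HLMV09}; the obstacle is verifying that $\maj$, $\coinv$, and the nonattacking condition are all respected by this relabel-and-reposition map, and in particular that the basements $\beta(\alpha)$ over the rearrangement class of $\gamma$ fit together coherently. A less combinatorial alternative would be to produce an explicit expansion of $G_{\gamma}$ into monomial or fundamental quasisymmetric functions, which would make (B) manifest.
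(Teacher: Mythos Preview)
Your treatment of (A), (D), and (B) matches the paper's. In particular, your content-shifting bijection for (B) is exactly the paper's argument: the paper calls a nonattacking filling \emph{packed} if its non-basement entries are $\{1,\dots,k\}$, observes that the weight $\wt(\sigma)$ depends only on the relative order of the entries (with basement entries in height-zero columns being irrelevant), and concludes that each packed filling generates a family whose monomial weights sum to a monomial quasisymmetric function. Your phrasing in terms of matching the coefficients of two monomials $x_{i_1}^{a_1}\cdots x_{i_k}^{a_k}$ is an equivalent packaging of the same bijection.

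Where you diverge from the paper is in \eqref{tatom}. Your primary plan is a direct combinatorial bijection via the column permutation $\beta(\alpha)$, and you yourself flag the gap: this permutation does not in general preserve the nonattacking condition or $\coinv$, and you do not explain how the descent-free restriction repairs this. The paper avoids this difficulty entirely with a short algebraic argument: induct on $\ell(\beta(\alpha))$, using the Hecke operator $T_i = t - \frac{tx_i-x_{i+1}}{x_i-x_{i+1}}(\mathbf{1}-s_i)$. The base case $\beta(\alpha)=\text{id}$ is a tautology, and for the inductive step the paper invokes two existing results: \cite[Theorem~1.22]{CMW18} gives $T_i F_\alpha = F_{s_i\alpha}$ when $\alpha_i>\alpha_{i+1}$, while \cite[Corollary~26]{Al16} gives $\tilde\Theta_i E_\alpha^{\text{id}}(X;0,t)=E_{s_i\alpha}^{\text{id}}(X;0,t)$ under the same hypothesis; since $T_i=\tilde\Theta_i$, applying $T_i$ to both sides of the inductive hypothesis finishes the step. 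Your fallback suggestion (``argue from the recursive eigenoperator definition'') is gesturing in this direction but is not specific enough to count as a proof. If you want to salvage the combinatorial route, you would need to exhibit an explicit $\coinv$- and nonattacking-preserving bijection between descent-free fillings of $\dg^{\text{id}}(\alpha)$ and $\dg^{\beta(\alpha)}(\inc(\alpha))$, which is not obvious and is not what the paper does.
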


\begin{proof}
The fact that $G_{\gamma}(X; q, t)$ satisfies \rm{(A)} follows from \eqref{basic}. There are several combinatorial proofs that $G_{\gamma}(X; q,t)$ is quasisymmetric and hence satisfies \rm{(B)}.  One proof uses the multiline queues from \cite{CMW18}.  Here we provide a proof using \eqref{formula} and a notion of \emph{packed} nonattacking fillings.

\begin{figure}
\begin{center}
\begin{tikzpicture}[scale=.5]
\cell041 
\cell141 \cell132 \cell123
\cell242 \cell234 \cell225
\cell344 \cell335 \cell323
 \cell444 \cell435 \cell426 \cell412
\cell5{-5}{10}\cell5{-4}9\cell5{-3}8\cell5{-2}7\cell5{-1}3 \cell501 \cell512 \cell526 \cell535 \cell544
\end{tikzpicture}\hspace{.5cm}
\begin{tikzpicture}[scale=.5]
\cell042 
\cell142 \cell133 \cell124
\cell243 \cell237 \cell228
\cell347 \cell338 \cell324
 \cell447 \cell438 \cell42{10} \cell413
\cell5{-5}9\cell5{-4}6\cell5{-3}5\cell5{-2}4\cell5{-1}2 \cell501 \cell513 \cell52{10} \cell538 \cell547
\end{tikzpicture}
\caption{On the left, a packed, nonattacking filling (with a basement).  
On the right, a nonattacking filling with the same $qt$-weight but a shifted monomial weight.}\label{nonatt1b}
\end{center}
\end{figure}

Call a nonattacking filling $\sigma$ {\it packed} if the set of non-basement entries in $\sigma$ equal $\{1,2,\ldots ,k\}$ for some $k$, as in
\cref{nonatt1b} (where $k=6$).
By \cref{non} any entry in row 1 will not generate any contribution to $\wt(\sigma)$.   The contributions to 
$\wt(\sigma)$ by the numbers in the rows above row 1 depend only on the
relative order of these numbers; hence  if we replace the filling $\sigma$ by another filling $R$ using a replacement of the form $i \to w_i$ for $1\le i \le k$, where
$k$ is the maximal value occurring above the basement in $\sigma$  and $1\le w_1 < w_2 < \cdots < w_k\le n$, then the vector of exponents $V(x^{R})$ satisfies
$V(x^{R})^{+} = V( x^{\sigma})^{+}$, and $\wt(\sigma) = \wt(R)$. Note that changing the entries in the basement in columns of height zero doesn't affect the weight of $\sigma$. 
For example, the monomial weight of the packed filling (including the basement) on the left of \cref{nonatt1b} is $x_1^2x_2^3x_3^2x_4^3x_5^3x_6$, 
with vector $V = (2,3,2,3,3,1)$, while the monomial weight of the filling on the right of \cref{nonatt1b}
is $x_2^2x_3^3x_4^2x_7^3x_8^3x_{10}$, with vector $V=(0,2,3,2,0,0,3,3,0,1)$. 
Hence each packed nonattacking filling can be associated to a family of nonattacking fillings with the 
same weight.  The sum of the monomial weights of all these fillings is a monomial quasisymmetric function by definition, and it follows that $G_{\gamma}(X;q,t)$ is quasisymmetric.

To see that  $G_{\gamma}(X; q, t)$ satisfies \rm{(C)}, recall that for $\gamma$ a strong composition of $n$, $\text{QS}_{\gamma}$ is the quasisymmetric function defined by the equation
\begin{align}
\label{QS}
\text{QS}_{\gamma}(X) = \sum_{\alpha:\ \alpha ^{+}= \gamma} E_{\alpha}^{\text{id}}(X;0,0).
\end{align}
To verify \rm{(C)}, it suffices to show that $F_{\alpha}(X; 0, 0) = E_{\alpha}^{\text{id}}(X; 0, 0)$.  
	We actually prove \eqref{tatom},  that 
\begin{equation*} 
	F_{\alpha}(X; 0, t) = E_{\alpha}^{\text{id}}(X; 0, t).
\end{equation*}

To prove this, we use induction on the length of $\beta(\alpha)$ together with the action of the Hecke operators $T_i:= t - \frac{tx_i - x_{i+1}}{x_i-x_{i+1}} ({\bf 1} - s_i)$.  For the base case, if $\alpha_1 \le \alpha_2 \le \hdots \l \alpha_n$, then $\alpha$ is already in increasing order and therefore $\beta(\alpha)=\text{id}$.  This implies that $F_{\alpha}(X;0,t) = E_{\inc(\alpha)}^{\beta(\alpha)}(X;0,t) = E_{\alpha}^{\text{id}}(X;0,t),$ as desired.

For the inductive step, assume $F_{\alpha}(X;0,t) = E_{\alpha}^{\text{id}}(X; 0,t)$ for all compositions $\alpha$ such that $\ell(\beta(\alpha)) \le k$.  Corteel, Mandelshtam, and Williams~\cite[Theorem 1.22]{CMW18} prove that $T_i F_{\alpha} = F_{s_i \alpha}$ for $\alpha_i > \alpha_{i+1}$, and Alexandersson~\cite[Corollary 26]{Al16} asserts that $\tilde{\Theta}_i E_{\alpha}^{\text{id}}(X; 0, t) = E_{s_i \alpha}^{\text{id}}(X; 0, t)$ for $\alpha_i > \alpha_{i+1}$. 
As noted in \cite[page 8]{Al16}, a straightforward calculation shows that 
$T_i = \tilde{\Theta}_i$.
  Therefore we apply $T_i$ to both sides of $F_{\alpha}(X;0,t) = E_{\alpha}^{\text{id}}(X;0,t)$ to get
\begin{align}
F_{s_i \alpha}(X;0,t) &= T_i E_{\alpha}^{\text{id}} (X;0,t) \\
 &=  \tilde{\Theta}_i E_{\alpha}^{\text{id}} (X;0,t) \\
 &= E_{s_i \alpha}^{\text{id}} (X;0,t).
\end{align}
Therefore $F_{\alpha}(X;0,t) = E_{\alpha}^{\text{id}}(X; 0,t)$, as desired.  Specializing to $t=0$ finishes the proof that $G_{\gamma}(X;q,t)$ satisfies (C). 
By \eqref{formula},  $G_{\gamma}(X;q,t)$ satisfies (D).
\end{proof}

\begin{remark}
From \cite{HHL08} we have
\begin{align}\label{PtoE}
P_{\lambda}(X;q,t)  = \sum_{\alpha: \alpha ^{+} =\lambda} c_{\lambda,\alpha} (q,t)E_{\alpha}^{id}(X;q,t),
\end{align}
where $c_{\lambda,\alpha}$ is a certain explicit rational function in $q, t$.  Letting $q=t=0$ in this identity, since $c_{\lambda}(0,0)=1$, produces the formula of Lascoux-Sch\"utzenberger expressing the Schur function $s_{\lambda}$ as a sum of Demazure atoms. In view of~\eqref{QS}, this also shows that the Schur function is a sum of the $\text{QS}_{\gamma}$.  Trying the same idea in \eqref{PtoE} though, one finds the sum of the $c_{\lambda,\alpha}E_{\alpha}^{id}(X;q,t)$ over $\alpha^{+}=\gamma$ are not generally quasisymmetric.  
\end{remark}

It would be interesting to find a connection between the quasisymmetric Macdonald polynomials $G_{\gamma}(X; q, t)$ that we introduce in this paper, and other objects in the literature.  We note that our $G_{\gamma}$ are different from the duals of the noncommutative symmetric function analogues of Macdonald polynomials introduced in~\cite{BerZab05}; we also do not see a connection to the noncommutative Hall-Littlewood polynomials studied in~\cite{Hiv00}.

\printbibliography

\end{document}